\newtheorem{theorem}{Theorem}[section]
\newtheorem{lemma}[theorem]{Lemma}
\newtheorem{proposition}[theorem]{Proposition}
\newtheorem{corollary}[theorem]{Corollary}
\theoremstyle{remark}
\newcommand{\hq } {{/\kern -.185em/}}
\newcommand{\he } {{\kern -.050em\sim \kern -.050em }}
\newcommand{\eq } {{\kern -.100em\sim \kern -.100em}}
\newcommand{\eqs } {{\kern -.100em\sim }}
\newcommand{\aheq} {{\kern -.100em\sim }}
\title[Hyperbolicity of cycle spaces, automorphism groups of
flag domains]{Hyperbolicity of cycle spaces\\
and\\
Automorphism groups of flag domains} 
\thanks{The author's research for this article was partially supported by
SFB/TR 12 and Schwerpunkt \emph{Representation Theory} of the DFG}
\author{Alan Huckleberry} 
\address{Fakult\"at f\"ur Mathematik, Ruhr-Universit\"at Bochum,\\ 
Universit\"atsstra\ss e 150, 
D-44801 Bochum, Germany, and \newline
School of science and engineering, Jacobs University,
Bremen Compus Ring 1,\newline D-28759 Bremen, Germany}
\email{ahuck@cplx.ruhr-uni-bochum.de}
\date{\today}
\begin{document}
%
\begin {abstract}
\noindent
If $G_0$ is a real form of a complex semisimple Lie group $G$
and $Z$ is compact $G$-homogeneous projective algebraic manifold, 
then $G_0$ has only finitely many orbits on $Z$. Complex analytic properties
of open $G_0$-orbits $D$ (flag domains) are studied.
Schubert incidence-geometry is used to prove the Kobayashi 
hyperbolicity of certain cycle space components $\mathcal {C}_q(D)$.  
Using the hyperbolicity of $\mathcal {C}_q(D)$ and analyzing
the action of $\mathrm {Aut}_{\mathcal O}(D)$ on it, an exact description
of $\mathrm {Aut}_{\mathcal O}(D)$ is given.  It is shown that, except in the 
easily understood case where $D$ is holomorphically 
convex with a nontrivial Remmert reduction, it is a Lie group 
acting smoothly as a group of holomorphic transformations on
$D$. With very few exceptions it is just $G_0$. 
\end {abstract}
\maketitle
\section {Introduction}
Throughout this paper $Z$ denotes a compact projective algebraic manifold
which is homogeneous with respect to a holomorphic action
of a connected complex semi-simple group $G$. Such manifolds
are alternatively described by $Z=G/Q$ where $Q$ is a
(complex) parabolic subgroup of $G$. A Lie subgroup 
$G_0$ of $G$ is said to be a \emph{real form} of $G$ whenever
its Lie algebra $\mathfrak {g}_0$ is defined as the fixed point
space of an antilinear involution $\tau :\mathfrak g\to \mathfrak g$.
Since $G_0$ has only finitely many orbits in $Z$ (\cite {W}), it
has at least one and usually many open orbits $D$.  Manifolds of
the type $Z$ are called \emph{flag manifolds} and the open
$G_0$-orbits $D$ are called \emph{flag domains}.  In all applications
standard decompositions allow us to reduce to the case where
$G_0$ is simple and therefore we assume this.  Note that if
$G_0$ is complex, being embedded in $G$ by an antiholomorphic
map, then $G=G_0\times G_0$.  This is the only situation where
it may not be assumed that $G$ is simple.

It is possible that
$D=Z$. This occurs when $G_0$ is compact and there are
several exceptional noncompact cases as well (\cite {O1,O2},
see Proposition 5.2.1 in \cite {FHW}). From the point of
view of the present work, these cases are not of interest. 
Thus we not only assume that $G_0$ is a simple noncompact real 
form of $G$ but also that $D$ is not compact.

Let us fix a maximal compact subgroup $K_0$ of $G_0$ and denote
by $K$ its complexification in $G$. A basic first example of
Matsuki-duality states that $K_0$ has exactly one orbit $C_0$ in
$D$ which is a complex submanifold (\cite {W}).  It is in fact
the only $K$-orbit in $Z$ which is contained in $D$. 

Fix $q:=\mathrm {dim}_{\mathbb C}(C_0)$ and recall that a 
$q$-cycle in $Z$ is a formal linear combination
$$
C=n_1X_1+\ldots +n_mX_m
$$
where the coefficients $n_i$ are positive integers and
the $X_i$ are compact, irreducible, $q$-dimensional subvarieties
of $Z$.  The set of all such cycles has the natural structure
of complex space (\cite {B}, see \cite {FHW} $\S7.4$ for the
properties needed here).  In the particular case where $Z$
is projective, it is the Chow variety. Its components
are themselves projective algebraic and the $G$-action on
these components is algebraic.  In many situations
we regard $C_0$ as the cycle $1.C_0$ in the cycle space and refer
to it as the \emph{base cycle}.  Since the cycle space is
smooth at $C_0$ (\cite {FHW} Theorem 18.6.1), the notion of the
irreducible component of the cycle space of $Z$ which contains $C_0$
makes sense.  This is denoted by $\mathcal {C}_q(Z)$ and
$$
\mathcal {C}_q(D):=\{C\in \mathcal {C}_q(Z); C\subset D\}\,.
$$  
It should be remarked that, although it is known that $\mathcal C_q(D)$
is smooth at the base point $C_0$, it is an open question as to whether
or not it is everywhere smooth.  

The following is the first main result of this paper.
\begin {theorem}\label {hyperbolicity}
The complex space $\mathcal {C}_q(D)$ is Kobayashi-hyperbolic.
\end {theorem}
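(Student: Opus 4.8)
The plan is to realize $\mathcal{C}_q(D)$ holomorphically and injectively inside the complement of a hyperplane arrangement in general position in a projective space, and then to invoke the classical theorem of M.~Green: $\mathbb{P}^M$ with $N\ge 2M+1$ hyperplanes in general position removed is complete Kobayashi-hyperbolic. Because the Kobayashi pseudodistance of a complex space does not increase under holomorphic maps, an injective holomorphic map of $\mathcal{C}_q(D)$ into such a complement at once gives $d^{\mathrm{Kob}}_{\mathcal{C}_q(D)}(C,C')>0$ for $C\ne C'$; this also finesses the fact that $\mathcal{C}_q(D)$ is not known to be everywhere smooth.

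The arrangement will be built from Schubert incidence-geometry. Fix an Iwasawa decomposition $G_0=K_0A_0N_0$ and an Iwasawa--Borel subgroup $B\supset A_0N_0$ of $G$; the closures of the $B$-orbits in $Z$ are the Iwasawa--Schubert varieties. For a Schubert variety $S$ of complementary dimension, chosen so that
$$
\mathcal{H}_S:=\bigl\{\,C\in\mathcal{C}_q(Z):\ \mathrm{supp}(C)\cap S\ne\emptyset\,\bigr\}
$$
is a reduced hypersurface in $\mathcal{C}_q(Z)$, the translates $\mathcal{H}_{gS}=g\cdot\mathcal{H}_S$ ($g\in G$) form a single $G$-orbit of divisors, hence are linearly equivalent and span a $G$-invariant linear subsystem $V\subset|\mathcal{L}|$ for one line bundle $\mathcal{L}$ on $\mathcal{C}_q(Z)$. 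The key point --- this is precisely the Schubert-domain picture of cycle spaces --- is that $S$ can be taken so that no cycle lying in $D$ meets any translate $gS$ with $g\in G_0$; equivalently $\mathcal{C}_q(D)$ is the connected component of $\mathcal{C}_q(Z)\setminus\bigcup_{g\in G_0}\mathcal{H}_{gS}$ containing $C_0$. Then the base locus of $V$ misses $\mathcal{C}_q(D)$, so $V$ defines a holomorphic map $\phi\colon\mathcal{C}_q(D)\to\mathbb{P}(V^*)=:\mathbb{P}^M$ under which each $\mathcal{H}_{gS}$ is the $\phi$-preimage of a hyperplane $H_g\subset\mathbb{P}^M$, and $\phi(\mathcal{C}_q(D))\cap H_g=\emptyset$ for every $g\in G_0$. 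A second incidence fact is needed: the family $\{gS\}_{g\in G_0}$ separates the cycles contained in $D$ (distinct such cycles lie on distinct incidence divisors, with the correct vanishing orders recorded), so that --- after enlarging $V$ by the incidence divisors of finitely many further Schubert varieties if need be --- $\phi$ is injective on $\mathcal{C}_q(D)$.

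It then remains only to pass to a finite sub-arrangement in general position. As the sections $\{(gS):g\in G_0\}$ span $V$, a generic choice of $g_1,\dots,g_{2M+1}\in G_0$ renders $H_{g_1},\dots,H_{g_{2M+1}}$ hyperplanes in general position in $\mathbb{P}^M$ --- the bad tuples, for which some $M+1$ of the $H_{g_i}$ are linearly dependent, form a proper analytic subset of $G_0^{\,2M+1}$. Since $\phi(\mathcal{C}_q(D))$ avoids all of them,
$$
\phi(\mathcal{C}_q(D))\ \subset\ \mathbb{P}^M\setminus\bigcup_{i=1}^{2M+1}H_{g_i},
$$
a complete Kobayashi-hyperbolic manifold by Green's theorem, and the distance-decreasing injection $\phi$ then forces $\mathcal{C}_q(D)$ to be Kobayashi-hyperbolic.

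The hard part is the two Schubert-incidence inputs and the tension between them: one must exhibit Iwasawa--Schubert varieties $S$ of exactly complementary dimension whose $G_0$-translates are on the one hand disjoint from every cycle in $D$ --- the Schubert-domain property, which rests on a careful Matsuki-duality analysis of how such Schubert varieties meet $D$ and its boundary orbits --- and on the other hand numerous enough that their incidence divisors both separate the cycles in $D$ and contain $2M+1$ members yielding hyperplanes in general position. Reconciling ``sparse enough to be avoided by every cycle in $D$'' with ``abundant enough to separate cycles and to span the linear system'' is where the geometry of Schubert incidence must do the real work; granted that, the reduction above is routine.
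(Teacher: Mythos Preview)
Your overall strategy---map $\mathcal{C}_q(D)$ holomorphically into a complement of $2M{+}1$ hyperplanes in general position via Schubert incidence divisors and invoke Green's theorem---is the paper's approach as well. But there is an overclaim and, more seriously, a genuine gap at the point you yourself flag as ``the hard part''.

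The overclaim is injectivity. The paper never obtains an injective $\phi$, and there is no evident reason one should exist. What the paper does is take the \emph{product}
\[
\psi=\varphi_1\times\cdots\times\varphi_m:\mathcal{C}_q(Z)\dashrightarrow \mathbb{P}(V_1^*)\times\cdots\times\mathbb{P}(V_m^*),
\]
one factor for each $(n{-}q)$-dimensional Iwasawa--Schubert variety $S_j\in\mathcal{S}_{C_0}(B)$, with the incidence divisor defined by the \emph{boundary} $E_j$ of $S_j$ (not by $S_j$ itself---note that an $(n{-}q)$-dimensional $S$ meets every cycle in $D$, so your $\mathcal{H}_S$ as written is not a hypersurface). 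The main technical result (Proposition~2.2) is that $\psi$ restricted to $\mathcal{C}_q(D)$ is \emph{finite-fibered}. That suffices: pass to the normalized graph and its Stein factorization $\widehat{\mathcal{C}_q(Z)}\to\widehat W\to W$; then $\mathcal{C}_q(D)$ sits as an open subset of $\widehat W$, and the finite proper map $\widehat W\to W$ pulls hyperbolicity back from the product of hyperplane complements (Kobayashi, Prop.~3.2.11). So you do not need injectivity, and demanding it makes your ``second incidence fact'' strictly harder than what is required.

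The gap is that even finite-fiberedness is not a formality; this is exactly where the work lies, and your sketch stops short of it. The paper's argument is by contradiction: if some compact curve $\widehat Y$ in a $\widehat\psi$-fiber meets $\mathcal{C}_q(D)$, let $X\subset Z$ be the $(q{+}1)$-dimensional variety swept out by the corresponding cycles. Since each $S_j$ meets every cycle in $D$ in finitely many points and $\mathcal{O}_j\cong\mathbb{C}^{m}$, the curve $X\cap S_j$ must hit $E_j$; hence some $C\in Y$ has $C\cap E_j\ne\emptyset$, and one checks that such $C$ must lie in the base locus of $\varphi_j$. The punchline is to manufacture an $S\in\mathcal{S}_{C_0}(B)$ for which $C$ is \emph{not} a base point: degenerate $C$ along $\overline{G.C}$ to a $B$-fixed cycle $C_1$ (a union of $q$-dimensional Schubert varieties), take an $(n{-}q)$-dimensional Schubert variety of the \emph{opposite} Iwasawa--Borel $B^*$ meeting $C_1$ off its boundary, and conjugate by an element of $K_0$ back to $\mathcal{S}_{C_0}(B)$. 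This opposite-Borel/limit-cycle mechanism is the crux, and nothing in your proposal supplies it.

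A smaller point: your general-position step (``the bad tuples form a proper analytic subset of $G_0^{2M+1}$'') needs more than spanning. The family of all hyperplanes through a fixed point spans the dual space but admits no tuple in general position. The paper secures general position by passing to the \emph{irreducible} $G$-subrepresentation $V_j\subset\Gamma(\mathcal{C}_q(Z),L_j)$ generated by the $B$-highest weight section $s_j$; irreducibility is what forces the $G_0$-orbit of $[s_j]$ to contain $2\dim V_j-1$ hyperplanes in general position (this is the content of the cited result from \cite{H1,FH,FHW}). Your $V$, defined as the span of a $G$-orbit of divisors, need not be irreducible.
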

The proof of this result is established using the 
\emph{Schubert incidence geometry} defined by special Borel
subgroups of $G$. For this, recall that a Borel subgroup
$B$ has only finitely many orbits $\mathcal O$ in $Z$.
Each is abstractly algebraically isomorphic to some
$\mathbb C^{m(\mathcal O)}$. These are referred to as
\emph{Schubert cells}.  Since the $B$-action on $Z$ is
algebraic, every such orbit is Zariski open in its 
closure. We denote such a closure by $S$ and write
$S=\mathcal O\dot \cup E$, where $E$ is the union of
the (lower-dimensional) $B$-orbits on the boundary
of $\mathcal O$. The varieties $S$, which in fact often
have singularities along $E$, are called \emph{Schubert varieties}.
Observe that the CW-decomposition of $Z$ which is defined
by the Schubert varieties only has cells in even (real) 
dimensions.  Thus they freely generate the integral homology
$H_*(Z,\mathbb Z)$.

If $G_0=K_0A_0N_0$ is an Iwasawa-decomposition and the
Borel group $B$ contains the solvable subgroup $A_0N_0$,
we refer to it as being an Iwasawa-Borel subgroup.
These subgroups can be viewed in the flag manifold $G/B$
of all Borel subgroups as the points on the (unique) closed 
$G_0$-orbit.  

Now consider the set $\mathcal {S}(B)$ of (n-q)-dimensional
Schubert varieties of an Iwasawa-Borel subgroup $B$.
It follows from Poincar\'e duality that at least some
of the $S$ in $\mathcal {S}(B)$ have nonempty (set-theoretic)
intersection with $C_0$.  The following is the essential 
ingredient for a number of considerations in this context
(see Theorem 9.1.1 in \cite {FHW}).
\begin {theorem}
If $S=\mathcal O\dot \cup E\in \mathcal {S}(B)$ is an 
(n-q)-dimensional Schubert variety  
of an Iwasawa-Borel subgroup of $G$ and $S\cap C_0\not=\emptyset$,
then $S\cap C_0$ is a finite subset $\{z_1,\ldots ,z_m\}$ 
which is contained in $\mathcal O$. At each $z_i$ the intersection
$S\cap C_0$ is transversal. The orbit $A_0N_0.z_i=:\Sigma _i$
is open in $S$ and closed in $D$; in particular 
$S\cap D\subset \mathcal O$. Furthermore, 
$\Sigma _i\cap C_0=\{z_i\}$.
\end {theorem}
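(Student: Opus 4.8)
The plan is to split the statement into an \emph{infinitesimal} part --- that $S\cap C_0$ lies in $\mathcal O$, that the intersection is transversal there, and that each $\Sigma_i$ is open in $S$ --- and a \emph{global} part --- finiteness, $S\cap D\subseteq\mathcal O$, closedness of $\Sigma_i$ in $D$, and $\Sigma_i\cap C_0=\{z_i\}$. The infinitesimal part I would get from a direct tangent-space computation using the Iwasawa decomposition and the complex structure on $C_0$; the global part I would reduce to one ``focusing'' statement about closures of $A_0N_0$-orbits, which is where the real difficulty lies.

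Write $n=\dim_{\mathbb C}Z$, fix the Iwasawa splitting $\mathfrak g_0=\mathfrak k_0\oplus\mathfrak a_0\oplus\mathfrak n_0$, and let $\mathfrak b$ be the Lie algebra of $B$, so that $\mathfrak a_0\subseteq\mathfrak b$ and $\mathfrak n_0\subseteq\mathfrak b$. Let $z\in S\cap C_0$ and let $\mathcal O_z=B\cdot z$ be the $B$-orbit through $z$; then $T_z\mathcal O_z=\mathfrak b\cdot z$ is a complex subspace of $T_zZ$ of complex dimension $\dim_{\mathbb C}\mathcal O_z\le n-q$, and $(\mathfrak a_0+\mathfrak n_0)\cdot z\subseteq\mathfrak b\cdot z$. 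Since $z\in D$ and $D=G_0\cdot z$ is open, $\mathfrak g_0\cdot z=T_zZ$, and hence
$$
\mathfrak k_0\cdot z+\mathfrak b\cdot z\ \supseteq\ \mathfrak k_0\cdot z+(\mathfrak a_0+\mathfrak n_0)\cdot z\ =\ T_zZ\,.
$$
As $z\in C_0$ and $C_0=K_0\cdot z$ is a complex submanifold, $\mathfrak k_0\cdot z=T_zC_0$ is a complex subspace of complex dimension $q$, so $\mathfrak k_0\cdot z+\mathfrak b\cdot z$ is a complex subspace of complex dimension at most $q+\dim_{\mathbb C}\mathcal O_z$. Comparing with the displayed equality forces $\dim_{\mathbb C}\mathcal O_z=n-q$, i.e.\ $z\in\mathcal O$ (so $S$ is smooth at $z$ and $T_zS=T_z\mathcal O$); it forces $T_zC_0\oplus T_z\mathcal O=T_zZ$, i.e.\ $C_0$ meets $S$ transversally at $z$; and, reading off real dimensions in $\mathfrak k_0\cdot z+(\mathfrak a_0+\mathfrak n_0)\cdot z=T_zZ$ together with $(\mathfrak a_0+\mathfrak n_0)\cdot z\subseteq T_z\mathcal O$ (both equal to $2(n-q)$), it forces $(\mathfrak a_0+\mathfrak n_0)\cdot z=T_z\mathcal O$, so the $A_0N_0$-orbit through $z$ is open in $\mathcal O$, hence in $S$. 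Transversality makes each point of $S\cap C_0$ isolated in the compact set $S\cap C_0$, so $S\cap C_0=\{z_1,\dots,z_m\}$ is finite and each $\Sigma_i=A_0N_0\cdot z_i$ is open in $S$ of real dimension $2(n-q)$.

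For the global assertions I would use the following: \emph{for every $w\in D$ the closure of $A_0N_0\cdot w$ in $Z$ meets $C_0$ in exactly one point.} Granting this, let $w\in S\cap D$; since $A_0N_0\subseteq B$ stabilizes $S$ and $A_0N_0\subseteq G_0$ stabilizes $D$, we have $A_0N_0\cdot w\subseteq S\cap D$, and its closure meets $C_0$ at a single point $z$, necessarily $z=z_i$ for some $i$. Because $\Sigma_i$ is open in $S$, there is an open $U\subseteq Z$ with $z_i\in U$ and $U\cap S=U\cap\Sigma_i$; since $z_i$ lies in the closure of $A_0N_0\cdot w\subseteq S$, the set $U$ meets $A_0N_0\cdot w$, and any such point lies in $U\cap S=U\cap\Sigma_i\subseteq\Sigma_i$, so $A_0N_0\cdot w=\Sigma_i\subseteq\mathcal O$ and in particular $w\in\mathcal O$. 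Hence $S\cap D\subseteq\mathcal O$ and $S\cap D=\Sigma_1\cup\dots\cup\Sigma_m$; discarding repetitions, these orbits are pairwise disjoint and each is open in $S\cap D$, hence each is also closed in $S\cap D$, and $S\cap D$ is closed in $D$ because $S$ is closed in $Z$; so each $\Sigma_i$ is closed in $D$. Finally, applying the focusing statement to $w=z_i\in C_0\cap\Sigma_i$ shows the closure of $\Sigma_i$ meets $C_0$ only at $z_i$, whence $\Sigma_i\cap C_0=\{z_i\}$.

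The main obstacle is the focusing statement, and in particular the \emph{uniqueness} of the limit point on $C_0$ (existence alone gives $S\cap D\subseteq\mathcal O$ and closedness, but not $\Sigma_i\cap C_0=\{z_i\}$). The natural route is a contraction argument: for $X$ in the open Weyl chamber of $\mathfrak a_0$ determined by the choice of Iwasawa-Borel $B$, the flow $\exp(tX)$ with $t\to+\infty$ should contract $D$ toward $C_0$ --- this is precisely where the special position of $B$ enters, via the Bialynicki--Birula picture in which $C_0$ plays the role of attractor --- and one then combines this with the unipotent flow of $N_0$ and with the fact that $C_0$ is the \emph{only} $K$-orbit in $D$ to force the entire cluster set of $A_0N_0\cdot w$ in $C_0$ down to a single point. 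An alternative is to fix a $K_0$-invariant Kähler metric on $Z$ and study, along each $A_0N_0$-orbit, the restriction of the associated moment map or of a ``distance to $C_0$'' function, showing it has essentially monotone behaviour with a unique limiting value. Either way, once this one fact is in hand, everything else in the statement follows formally as above.
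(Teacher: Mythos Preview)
First, note that the paper itself does not prove this statement; it is quoted from \cite{FHW} (Theorem 9.1.1 there), so there is no in-paper proof to compare against. That said, your infinitesimal argument is correct and is essentially the standard one: the tangent-space count using $\mathfrak g_0=\mathfrak k_0\oplus\mathfrak a_0\oplus\mathfrak n_0$ together with the complexity of $C_0$ really does force $z\in\mathcal O$, transversality, and $(\mathfrak a_0+\mathfrak n_0)\cdot z=T_z\mathcal O$. The \emph{existence} half of your focusing statement is also correct and much easier than you suggest: from the Iwasawa decomposition $G_0=A_0N_0K_0$, any $w\in D$ is $w=an\,k\cdot p_0$, so $(an)^{-1}w=k\cdot p_0\in C_0$ lies on the orbit $A_0N_0\cdot w$ itself---no closures needed. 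Since $E$ is $A_0N_0$-invariant and $E\cap C_0=\emptyset$, this already yields $E\cap D=\emptyset$, hence $S\cap D\subset\mathcal O$, hence $S\cap D=\bigcup_i\Sigma_i$ with each $\Sigma_i$ open and closed in $S\cap D$, hence closed in $D$.

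The genuine gap is in the \emph{uniqueness} half, and it is more serious than a missing argument: your focusing statement, as formulated for every $w\in D$, is \emph{false}. Take $G_0=\mathrm{SL}_3(\mathbb R)$ on $Z=\mathbb P^2$, so $D=\mathbb P^2\setminus\mathbb{RP}^2$, $K_0=\mathrm{SO}(3)$, $C_0=\{z_1^2+z_2^2+z_3^2=0\}$, and $A_0N_0$ the real upper-triangular group with positive diagonal. The point $[0:1:i]\in C_0$ has $A_0N_0$-orbit equal to $\{[w_1:w_2:1]:\mathrm{Im}\,w_2<0\}$, which is \emph{open} in $D$ and therefore meets $C_0$ in a real $2$-dimensional set. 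Thus no contraction or moment-map argument can prove your focusing statement as stated, because the statement is not true. (In the same example the $\exp(tX)$-flow for $X$ in the positive Weyl chamber sends points of $D$ to the coordinate points $[1:0:0]$, $[0:1:0]$, $[0:0:1]$, none of which lies on $C_0$; so the Bia\l ynicki--Birula picture you sketch does not hold either.)

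What you actually need for the last clause $\Sigma_i\cap C_0=\{z_i\}$ is only the much weaker assertion that the $\Sigma_i$ are pairwise distinct, i.e.\ that no two points of the finite set $S\cap C_0$ lie on the same $A_0N_0$-orbit. This is specific to the Schubert variety $S$ and does \emph{not} follow from a general dynamical statement about $A_0N_0$-orbits in $D$; in \cite{FHW} it is obtained by a more structural argument relating Iwasawa--Borel Schubert cells to $K$-orbits and using that $C_0$ is the unique closed $K$-orbit in $D$. Your plan goes through once you replace the global focusing statement by this restricted one and supply a proof tailored to it.
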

If $S$ is as in the above Theorem, then we consider the
\emph{incidence variety} 
$$
D_S:=\{C\in \mathcal {C}_q(Z); C\cap E\not=\emptyset\}\,.
$$
It is important that, for appropriate multiplicities of its 
components, this is a Cartier divisor.
(see \cite {FHW}, $\S7.4$, and the Appendix of \cite {HS}).
Theorem \ref{hyperbolicity} is proved by considering  the
maps given by the linear systems defined by the incidence
divisors $D_S$.  These are pieced together in such a way
that after paying the small price of a finite map, 
$\mathcal C_q(D)$ is realized as an open subset of a
hyperbolic domain in a product of projective spaces.
The hyperbolicity of this latter domain is proved by
applying the classically proven fact that the complement
the union of $2n+1$ hyperplanes in general position in
$\mathbb P_n$ is hyperbolic.  

The method of Schubert incidence geometry has played a
role in much of our work over the last years.  In particular,
in \cite {H1} we used this method to prove the hyperbolicity
of the group-theoretically defined cycle space $\mathcal M_D$
which is the connected component containing the base cycle
of the intersection $G.C_0\cap \mathcal C_q(D)$.
The proof here of hyperbolicity in the case of the full cycle space 
$\mathcal {C}_q(D)$ is substantially more involved.   

It should be remarked that, except for a certain Hermitian cases 
which are well-understood,
$\mathcal {M}_D$ only depends on $G_0$, i.e., it doesn't
vary as the $G$-flag manifold $Z$ and the $G_0$-flag domain $D$
vary (see \cite{FHW}).  On the other hand, $\mathcal {C}_q(D)$
varies (sometimes wildly) as $Z$ and $D$ vary 
(Part IV of \cite{FHW}).  Thus it would seem that the
cycle spaces $\mathcal {C}_q(D)$ might provide a wide range
of interesting Kobayashi-hyperbolic spaces which could 
be useful for holomorphically realizing $G_0$-representations.     

Theorem \ref{hyperbolicity} is one of the essential ingredients
in the proof of the following second main result 
of this paper (see $\S\ref{Lie group}$, in particular
Theorem \ref{lie structure}).
\begin {theorem}\label {finite-dimensional}
Unless $D$ is holomorphically convex with nontrivial Remmert
reduction, the automorphism group $\mathrm {Aut}_{\mathcal O}(D)$ 
is a Lie group acting smoothly on $D$ as a group of holomorphic 
transformations. 
\end {theorem}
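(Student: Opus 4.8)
The plan is to split into the two cases dictated by the structure of $D$. Suppose first that $D$ is *not* holomorphically convex, or more precisely that it admits no nontrivial Remmert reduction; the goal is then to produce a faithful holomorphic action of $\mathrm{Aut}_{\mathcal O}(D)$ on the hyperbolic space $\mathcal C_q(D)$ and invoke the Bochner–Montgomery / Kobayashi theory. First I would recall that every $\varphi\in\mathrm{Aut}_{\mathcal O}(D)$ maps compact subvarieties to compact subvarieties and preserves the dimension $q$, hence induces a bijection of the set of $q$-cycles contained in $D$; the content to be checked is that $\varphi_*$ preserves the *component* $\mathcal C_q(D)$ of the cycle space containing the base cycle $C_0$. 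This is where the holomorphic-convexity hypothesis enters: when $D$ has no Remmert reduction, one shows that the base cycle $C_0$ (the unique $K_0$-orbit which is a complex submanifold) is intrinsically characterized — e.g. as a connected component of a distinguished compact analytic subset, or via the fact that $\mathcal C_q(D)$ is, after the finite map of Theorem 1.1's proof, an open subset of a fixed hyperbolic domain in a product of projective spaces — so that $\varphi_*$ must send $\mathcal C_q(Z)$ to itself and $C_0$ into $\mathcal C_q(D)$, whence $\varphi_* (\mathcal C_q(D)) = \mathcal C_q(D)$.

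Next I would upgrade this to a continuous, indeed holomorphic, homomorphism
$$
\rho:\ \mathrm{Aut}_{\mathcal O}(D)\ \longrightarrow\ \mathrm{Aut}_{\mathcal O}(\mathcal C_q(D)).
$$
Because $\mathcal C_q(D)$ is Kobayashi-hyperbolic by Theorem 1.1, its automorphism group is a (finite-dimensional) real Lie group acting properly, and moreover any holomorphic map of $\mathcal C_q(D)$ to itself is distance-decreasing, so $\mathrm{Aut}_{\mathcal O}(\mathcal C_q(D))$ acts by isometries of the Kobayashi metric. The key reduction is then to control the kernel of $\rho$: an automorphism of $D$ fixing every cycle in $\mathcal C_q(D)$ in particular fixes the family of cycles through each point of $C_0$, and by moving the base cycle around under $G_0$ (using that $G_0.C_0$ is contained in $\mathcal C_q(D)$) one sweeps out an open, hence dense, subset of $D$; by the identity principle $\ker\rho$ is trivial, or at worst a discrete group that one handles separately. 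Combining, $\mathrm{Aut}_{\mathcal O}(D)$ embeds as a closed subgroup of a Lie group, hence is itself a Lie group, and the action on $D$ is smooth by the Bochner–Montgomery theorem (a locally compact group acting effectively and holomorphically on a complex manifold is a Lie group acting smoothly).

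The main obstacle, and the step requiring real work, is the preservation of the component $\mathcal C_q(D)$ — equivalently, showing that an arbitrary holomorphic automorphism cannot carry the base cycle out of its cycle-space component — and the accompanying verification that $\mathrm{Aut}_{\mathcal O}(D)$ is *locally compact* to begin with, so that Bochner–Montgomery applies. For local compactness I would argue by normal-families/hyperbolicity: a sequence $\varphi_j$ induces $\varphi_{j*}$ on the hyperbolic space $\mathcal C_q(D)$, which forms a normal family by the distance-decreasing property, and the limit cycle map together with Barlet's continuity of the cycle-to-subvariety correspondence lets one reconstruct a limiting holomorphic map of $D$, showing $\mathrm{Aut}_{\mathcal O}(D)$ is closed in the topology of locally uniform convergence and its bounded subsets are relatively compact. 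Finally, in the excluded case — $D$ holomorphically convex with nontrivial Remmert reduction $\pi:D\to D'$ — one notes that $D'$ is a lower-dimensional flag-type object (a point, or an affine-algebraic homogeneous space, or an open orbit in a smaller flag manifold) and $\mathrm{Aut}_{\mathcal O}(D)$ fibers over $\mathrm{Aut}_{\mathcal O}(D')$ with typically infinite-dimensional fibers of holomorphic transformations of the fibers of $\pi$; this is precisely why the hypothesis is needed, and I would simply record that this case is handled by the explicit description of Remmert reductions of flag domains given in $\S\ref{Lie group}$ rather than by the hyperbolicity argument.
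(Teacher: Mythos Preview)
Your overall architecture matches the paper's: map $\mathrm{Aut}_{\mathcal O}(D)$ into $\mathrm{Aut}_{\mathcal O}(\mathcal C_q(D))$, use hyperbolicity (Theorem~\ref{hyperbolicity}) to make the target a Lie group, prove closedness via a normal-families argument on the universal family, and conclude with Bochner--Montgomery. The closedness argument you sketch is essentially the paper's Proposition~\ref{closed map}. However, there is a genuine gap in your treatment of the kernel of $\rho$, and your identification of the ``main obstacle'' is misplaced.

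The issue is this. An element $g\in\ker\rho$ fixes each cycle \emph{as a point of $\mathcal C_q(D)$}, i.e.\ as a subset of $D$, not pointwise. Your claim that moving $C_0$ by $G_0$ sweeps out an open set and then ``by the identity principle $\ker\rho$ is trivial'' does not follow: $g$ could act nontrivially on each cycle while stabilizing it setwise. What one actually needs is that \emph{cycles separate points}, meaning that for $p\neq q$ there is a cycle containing one but not the other; under that hypothesis the kernel is trivial by an easy argument (this is Proposition~\ref{injectivity} in the paper). But cycle separation can genuinely fail: the quotient $D\to\tilde D$ by the cycle-separation equivalence relation can have positive-dimensional fibers, and then the ineffectivity $I=\ker\rho$ is exactly the group of automorphisms acting trivially on $\tilde D$. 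This is \emph{not} ``a discrete group that one handles separately'' by a short remark.

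The paper devotes all of $\S4$ to this case. One first shows $I$ is a Lie group by restricting the $I$-action to a single cycle $C\in\mathcal M_D$ and to a single fiber of $C\to\tilde C_{res}$, proving this restriction is a closed injective homomorphism into $\mathrm{Aut}_{\mathcal O}(F)$ (this uses cycle connectivity to propagate along chains). Then one shows $I$ is \emph{finite}: the connected component $I^\circ$ is normalized by $G_0$, so $I^\circ\rtimes G_0$ acts on $D$ and, by Proposition~\ref{action extension}, extends to act on $Z$; its complexification $\tilde G$ then acts transitively on $Z$ and properly contains $G$. By Onishchik's classification this forces $Q$ to be a \emph{maximal} parabolic --- but when $D\to\tilde D$ is nontrivial it is the restriction of a nontrivial $G$-fibration $Z\to\tilde Z$, so $Q$ is \emph{not} maximal. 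This contradiction gives $I^\circ=\{e\}$, hence $I$ is discrete and, being centralized by $G_0$, finite. None of this is visible in your sketch, and the Onishchik step in particular is a nontrivial external input that you would not have guessed from the identity-principle heuristic. By contrast, the ``preservation of the component $\mathcal C_q(D)$'' that you flag as the main obstacle is not where the real work lies.
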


This leads to a detailed description
of the connected component $\mathrm {Aut}_{\mathcal O}(D)^\circ$
for every flag domain $D$.  In order to prove Theorem \ref{finite-dimensional}
we characterize holomorphic convexity by a cycle condition and show that
if this condition is not satisfied, then the natural map
$\mathrm {Aut}_{\mathcal O}(D)\to \mathrm{Aut}_{\mathcal O}(\mathcal {C}_q(D))$ 
is finite-fibered and is a local homeomorphism
onto its closed image. The desired results then follow 
from the fact that the automorphism
group of a hyperbolic complex space is a Lie group. 

As is indicated in the statment of Theorem \ref{finite-dimensional}
the exception to the finite-dimensionality occurs in a setting
which is optimal from the point of view of complex analysis,
namely where $D$ is holomorphically convex but not Stein.
This can only occur in the case where $G_0$ is of Hermitian
type and even in that case it is very rare. 
The Remmert reduction $D\to \widehat D$, which in this case
is induced from a fibration $Z\to \widehat Z$ of the ambient flag 
manifolds, is a $G_0$-equivariant homogeneous fibration onto the 
$G_0$-Hermitian symmetric space of noncompact type embedded in 
its compact dual. This situation can be characterized in a 
number of ways (see $\S\ref{cycle connectivity}$). 

In $\S6$ it is shown that unless $Z=G/Q$ and $Q$ is a maximal parabolic
subgroup, or equivalently $b_2(Z)=1$, it follows that 
$\tilde G_0:=\mathrm {Aut}_\mathcal O(D)=G_0$.
Even when $b_2(Z)=1$ there are very few exceptional cases.  A
key point for the classification of these cases is that
if $\tilde G_0$ properly contains $G_0$, then the complexification
$\tilde G$ also acts transitively on $Z$. Therefore we
are a position to apply Onishchik's classification
of that situation, i.e., that where two different 
complex simple groups act transitively on $Z$.
\section {Hyperbolicity of the cycle space}
This section is devoted to the proof of Theorem \ref{hyperbolicity}.
For this we at first consider an Iwasawa-decomposition 
$G_0=K_0A_0N_0$ and fix an Iwasawa-Borel subgroup $B$ which
contains $A_0N_0$.  Let $\mathcal {S}_{C_0}(B)$ be the
set of (n-q)-dimensional $B$-Schubert varieties which have
nonempty intersection with the base cycle $C_0$. For 
$S=\mathcal O\dot \cup E\in \mathcal {S}_{C_0}(B)$ let $D$ be 
a Cartier divisor with support on the incidence variety
defined by $E$ and let $L$ be the associated line bundle
on $\mathcal {C}_q(Z)$ with section $s$ defining $D$.

We may choose $G$ to be simply-connected so that $L$ 
has the unique structure of a $G$-bundle with $s$ being
a $B$-eigenvector in the finite-dimensional $G$-representation 
space $\Gamma (\mathcal {C}_q(Z),L)$.  Since this representation
may not be irreducible, we let $V$ be the irreducible subrepresentation
which contains $s$ as a $B$-highest weight vector and consider
the $G$-equivariant meromorphic map
$$
\varphi :\mathcal {C}_q(Z)\to \mathbb P(V^*)
$$
defined by $\varphi (x):=\{s; s(x)=0\}$.

Observe that since $E$ is contained in the complement of $D$,
the restriction of $\varphi $ to $\mathcal {C}_q(D)$ is base
point free. Furthermore, the image $\varphi (\mathcal {C}_q(D))$
is contained in the complement $\mathbb P(V^*)\setminus H_0$ of
the hyperplane which corresponds to the section $s$.

Note that since $H_0$ is $A_0N_0$-invariant, the family
$\mathcal F:=\{gH_0\}_{g\in G_0}$ is compact. As a consequence
the union $\cup_{H\in \mathcal F}H$ is compact and its open
complement $\Omega $ contains the image $\varphi (\mathcal {C}_q(D))$.
In \cite{H1} (see also \cite {FH,FHW}) we utilize the
fact that the d-dimensional $G$-representation on $V^*$ is
irreducible to prove the
following fact.
\begin {theorem} The family $\mathcal F$ contains $2d-1$ hyperplanes
$H_1,\ldots ,H_{2d-1}$ which are in general position in
$\mathbb P(V^*)$.  In particular, $\Omega $ is Kobayashi-hyperbolic.
\end {theorem}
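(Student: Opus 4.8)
The plan is to exhibit, inside the compact family $\mathcal F = \{gH_0\}_{g\in G_0}$, a configuration of $2d-1$ hyperplanes in general position in $\mathbb P(V^*)$, and then to invoke the classical theorem of Green (building on Bloch, Cartan, and Fujimoto) that the complement of $2m+1$ hyperplanes in general position in $\mathbb P_m$ is complete hyperbolic and hyperbolically embedded. Here $m = d-1 = \dim \mathbb P(V^*)$, so $2m+1 = 2d-1$ is exactly the number we need. Since $\Omega$ is by construction the complement of $\cup_{H\in\mathcal F}H$, once we have these $2d-1$ members of $\mathcal F$ in general position, $\Omega$ is contained in the complement of those $2d-1$ hyperplanes, and an open subset of a hyperbolic space is hyperbolic; this yields the second assertion immediately from the first.

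So the whole content is the existence statement. First I would observe that $H_0$ is the projectivization of a hyperplane in $V^*$, equivalently a point $[s]\in\mathbb P(V)$ (the $B$-highest weight line), and that $gH_0$ corresponds to $g\cdot[s]\in\mathbb P(V)$. Thus the family $\mathcal F$, viewed in the dual projective space, is precisely the $G_0$-orbit of the highest weight point $[s]$ in $\mathbb P(V)$. General position of $k$ hyperplanes $H_{g_1},\dots,H_{g_k}$ in $\mathbb P(V^*)$ means that the corresponding points $g_1[s],\dots,g_k[s]$ in $\mathbb P(V)$ are in general position, i.e., every subset of size $\le d$ is linearly independent (spans a subspace of the expected dimension). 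So I must show: the $G_0$-orbit of the highest weight point $[s]$ contains $2d-1$ points in linearly general position in $\mathbb P(V)$.

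The key step is to pass from $G_0$ to its complexification $G$ and then back down. Because $V$ is an irreducible $G$-representation, the $G$-orbit $G\cdot[s]$ is the closed highest-weight orbit, and it spans $\mathbb P(V)$ (it is not contained in any hyperplane, else $V$ would have a proper subrepresentation). Now I would argue that the set of $k$-tuples $(g_1,\dots,g_k)\in G^k$ for which $g_1[s],\dots,g_k[s]$ fail to be in general position is a proper Zariski-closed subset of $G^k$ — proper because the orbit spans $\mathbb P(V)$, so one can always find points avoiding any given linear degeneracy (this is a standard ``spanning implies generic independence'' argument, done one point at a time). Hence the good locus is Zariski-dense in $G^k$. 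The point is then to upgrade this to $G_0$: since $G_0$ is a real form of $G$, it is Zariski-dense in $G$ (its Zariski closure is a complex subgroup containing a real form, hence all of $G$), and a nonempty Zariski-open subset of $G^k$ meets the Zariski-dense subset $G_0^k$. Taking $k = 2d-1$ produces the required $2d-1$ elements $g_1,\dots,g_{2d-1}\in G_0$ with $g_i[s]$ in general position, i.e., $H_1 = g_1H_0,\dots,H_{2d-1}=g_{2d-1}H_0$ in general position in $\mathbb P(V^*)$.

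The main obstacle — really the only subtle point — is the step identifying the failure locus as a \emph{proper} closed subvariety of $G^k$; this rests on the fact that $G\cdot[s]$ spans $\mathbb P(V)$, which in turn is exactly where irreducibility of $V$ is used, as the statement of the theorem emphasizes. Everything else (Zariski density of the real form, stability of general position under small perturbation, the reduction of hyperbolicity of $\Omega$ to Green's theorem) is routine. I would organize the writeup as: (i) reformulate $\mathcal F$ as the $G_0$-orbit of $[s]$ in $\mathbb P(V)$ and general position as linear independence of point-tuples; (ii) prove the good locus in $G^k$ is Zariski-open and dense using the spanning property; (iii) intersect with $G_0^k$ by Zariski density of $G_0$; (iv) conclude with Green's hyperbolicity theorem and the monotonicity of the Kobayashi pseudodistance under inclusion of open subsets.
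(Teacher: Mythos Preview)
Your argument is correct and follows the approach indicated by the paper, which does not prove this theorem here but refers to \cite{H1}, \cite{FH}, \cite{FHW}, noting only that the key input is the irreducibility of the $G$-representation on $V^*$. Your use of irreducibility (to get spanning of the highest-weight orbit, hence properness of the failure locus in $G^k$), Zariski-density of the real form $G_0$ in $G$, and Green's theorem on $2m+1$ hyperplanes is precisely the expected line.
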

The technical work for the proof of Theorem \ref{hyperbolicity} 
amounts to carrying out the above construction for each of the Schubert 
varieties in $\mathcal {S}(B)$ and then to show that the restriction 
to $\mathcal {C}_q(D)$ of the product of the resulting 
maps is finite-fibered. For a precise statement 
let $S_1,\ldots ,S_m$ be a list of the Schubert 
varieties in $\mathcal {S}_{C_0}(B)$ and
$V_1,\ldots ,V_m$ be the irreducible $G$-representations
with highest weight vectors $s_1,\ldots ,s_m$ obtained as above.
Let $\varphi_j:\mathcal {C}_q(Z)\to \mathbb P(V^*_j)$ be the
associated meromorphic maps.
\begin {proposition} \label{finite map}
The restriction of the meromorphic map
$$
\psi :=\varphi_1\times \ldots \times \varphi_m:
\mathcal {C}_q(Z)\to \mathbb P(V_1^*)\times \ldots \times P(V_m^*)
$$
to $\mathcal {C}_q(D)$ is holomorphic and finite-fibered.
\end {proposition}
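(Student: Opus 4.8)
The plan is to show that $\psi$ is holomorphic on $\mathcal{C}_q(D)$ and that its fibers are finite, by combining the geometric content of Theorem 2.2 (the structure of $S \cap C_0$ for Iwasawa-Borel Schubert varieties) with a moving-lemma argument: conjugating the Schubert varieties $S_j$ by elements of $G_0$ produces enough incidence divisors to separate any two distinct cycles in $\mathcal{C}_q(D)$.

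First I would verify holomorphy. By construction each $\varphi_j$ is base-point-free on $\mathcal{C}_q(D)$, since the exceptional part $E_j$ of $S_j$ lies in the complement of $D$ and hence every cycle $C \subset D$ avoids $E_j$; thus each $\varphi_j|_{\mathcal{C}_q(D)}$ is holomorphic, and so is the product $\psi|_{\mathcal{C}_q(D)}$. Moreover, by the discussion preceding Theorem 2.1, the image $\varphi_j(\mathcal{C}_q(D))$ lies in the hyperbolic domain $\Omega_j \subset \mathbb{P}(V_j^*)$; in particular $\psi$ maps $\mathcal{C}_q(D)$ into the product $\Omega_1 \times \cdots \times \Omega_m$ of hyperbolic domains. (This last point is what Theorem \ref{hyperbolicity} will ultimately exploit, but for the Proposition itself only holomorphy and finiteness of fibers are needed.)

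The core of the argument is the finiteness of fibers. Suppose $C, C' \in \mathcal{C}_q(D)$ are distinct cycles with $\psi(C) = \psi(C')$. I would translate the equality $\varphi_j(C) = \varphi_j(C')$ into the statement that $C$ and $C'$ have the same incidence behavior with respect to \emph{every} $G_0$-translate $g.S_j$ of $S_j$ lying in the family $\mathcal{F}_j$ — more precisely, that for each $g \in G_0$ the cycle $C$ meets the boundary piece $g.E_j$ if and only if $C'$ does, with matching intersection data encoded by the section. The key geometric input is Theorem 2.2: for a generic Iwasawa-translate the intersection $S_j \cap C$ is a finite transversal set lying in the open cell $\mathcal{O}_j$, and similarly for $C'$; hence the incidence divisors $D_{g.S_j}$ actually record, via proper intersection, the finite point sets $g.S_j \cap C$ and $g.S_j \cap C'$. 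Because the union over $j$ of the homology classes $[S_j]$ is Poincaré-dual to a generator of $H^{2q}(Z,\mathbb{Z})$ restricted appropriately — i.e. the $S_j$ together with their $G_0$-translates sweep out enough $(n-q)$-cycles to detect any $q$-cycle — I would argue that if $C \neq C'$ then some translate $g.S_j$ meets $C$ and $C'$ in different configurations (in number or in position), contradicting $\psi(C)=\psi(C')$. Concretely, pick a point $z$ in the support of $C$ not in the support of $C'$ (or where local multiplicities differ); since the Iwasawa-Borel Schubert varieties through a generic point, together with their $G_0$-translates, cover $Z$ and can be made to pass through $z$ while avoiding the finitely many other relevant points, one obtains a translate whose incidence with $C$ differs from that with $C'$. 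Since the maps $\varphi_j$ are defined by the full irreducible subrepresentations $V_j$ — hence are $G$-equivariant and "see" all translates $g.S_j$ simultaneously in the single map $\varphi_j$ (a translate $g.s_j$ is again a section of $L_j$, vanishing on $D_{g.S_j}$) — this difference is already visible in $\varphi_j(C)$ versus $\varphi_j(C')$. Therefore $\psi$ is injective modulo at most a finite ambiguity; properness of $\psi$ as a map of the projective Chow component $\mathcal{C}_q(Z)$ then upgrades finiteness of generic fibers to finiteness of all fibers over $\mathcal{C}_q(D)$.

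The main obstacle I anticipate is the last separation step: showing rigorously that distinct cycles in $\mathcal{C}_q(D)$ are distinguished by the incidence data of Iwasawa-Borel Schubert varieties and their $G_0$-translates. One must control two things at once — that enough translates $g.S_j$ ($g \in G_0$) genuinely meet a given cycle $C \subset D$ (this is where Theorem 2.2 and a Poincaré-duality count are essential, guaranteeing nonempty transversal intersection in the open cell), and that the section $s_j$, restricted to $V_j$, records the intersection with sufficient precision to detect a difference between $C$ and $C'$ rather than merely the coincidence of some coarser invariant. Handling the possibility that $C$ and $C'$ share support but differ in multiplicities, and ensuring the chosen translate avoids the finitely many "bad" points of both cycles, will require the transversality and finiteness assertions of Theorem 2.2 applied after a small $G_0$-perturbation, together with the fact that $\mathcal{C}_q(D)$ is an open subset of the smooth-at-$C_0$ Chow component on which these Schubert intersections stay inside the open cells.
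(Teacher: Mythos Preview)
Your holomorphy paragraph is fine, but the finiteness argument has a genuine gap that stems from a misreading of what the maps $\varphi_j$ actually encode. The section $s_j$ vanishes on the incidence divisor $\{C : C\cap E_j\neq\emptyset\}$, and by Theorem~2.2 the boundary $E_j$ (and every $G_0$-translate $g.E_j$, since $G_0$ stabilizes $D$) lies in $Z\setminus D$. Hence \emph{no} cycle $C\subset D$ ever meets any $g.E_j$ with $g\in G_0$: all $G_0$-translates $g.s_j$ are nonvanishing on $\mathcal C_q(D)$, so the ``incidence behavior with respect to $G_0$-translates'' you propose to use is identically trivial there and separates nothing. The equality $\varphi_j(C)=\varphi_j(C')$ is a statement about ratios of values of a full basis of $V_j$ (equivalently, about which $G$-translates of $s_j$ vanish, not $G_0$-translates), and your geometric picture of passing a translate of $S_j$ through a point $z\in |C|\setminus |C'|$ does not translate into a statement about these ratios. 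Your final appeal to ``properness of $\psi$ on $\mathcal C_q(Z)$'' is also problematic, since $\psi$ is only meromorphic there.

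The paper's proof takes a completely different route. It does not try to separate pairs of cycles in $\mathcal C_q(D)$; instead it argues by contradiction that no $\widehat\psi$-fiber can contain a compact curve $\widehat Y$ meeting the lift of $\mathcal C_q(D)$. The key geometric step is to push the universal family over the image curve $Y\subset\mathcal C_q(Z)$ down to a $(q{+}1)$-dimensional variety $X\subset Z$; for each $S_j=\mathcal O_j\dot\cup E_j$ the $1$-dimensional piece of $X\cap S_j$ meeting $D$ must exit the affine cell $\mathcal O_j$ and hit $E_j$, producing a cycle $C\in Y$ with $C\cap E_j\neq\emptyset$. A short lemma shows any such $C$ must be a base point of $\varphi_j$. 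The contradiction comes from an opposite-Borel argument: degenerate $C$ inside $\overline{G.C}$ to a $B$-fixed cycle $C_1$, use Poincar\'e duality to find a $B^*$-Schubert variety $S_1$ meeting $C_1$ away from its boundary, observe that $B^*$ is $K_0$-conjugate to $B$ so some $k_0S_1$ lies in $\mathcal S_{C_0}(B)$, and conclude that a $G$-translate of the corresponding section is nonzero at $C$, so $C$ is \emph{not} a base point after all. This degeneration/base-point mechanism is the idea your proposal is missing.
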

The proof of this Proposition, which goes by assuming
the contrary and deriving a contradiction, requires 
some notational preliminaries.  For this 
let $\widehat {\mathcal {C}_q(Z)}$ be the normalized graph
of $\psi $ and 
$$
\widehat {\psi}: \widehat {\mathcal {C}_q(Z)}\to 
P(V_1^*)\times \ldots \times P(V_m^*)
$$
be the resulting holomorphic map.  

We suppose to the contrary
of the claim in Proposition \ref{finite map} that 
$\widehat Y$ is a compact complex curve 
in some $\widehat \psi $-fiber and that $\widehat Y$ has
nonempty intersection with the (biholomorphic) lift
of $\mathcal {C}_q(D)$ in $\widehat {\mathcal {C}_q(Z)}$.
Under that assumption 
let $Y$ denote the projection of $\widehat Y$ in 
$\mathcal {C}_q(Z)$, $\mathfrak X_Y$ the preimage
of $Y$ in the the universal family $\mathfrak X\to \mathcal {C}_q(Z)$
and let $X$ be the the (q+1)-dimensional image of
$\mathfrak X_Y$ in $Z$.

Now consider the intersection of $X$ with any one of the
Schubert varieties in $\mathcal {S}_{C_0}(B)$ which for simplicity we denote by 
$S=\mathcal O\dot \cup E$. Since $X\cap D\not=\emptyset $
and every such $S$ intersects every cycle (transversally) in $D$ in
only finitely many points, it follows that $S\cap X$
contains a 1-dimensional component which has nonempty intersection
with $D$.  Since $\mathcal O\cong \mathbb C^{m(\mathcal O)}$, 
the intersection with $X\cap E$ is nonempty. 
\begin {lemma}
Every $C\in Y$ with $C\cap E\not=\emptyset$ is contained in
the base point set $B_\psi $ of the meromorphic 
map defined by the Schubert variety $S$.
\end {lemma}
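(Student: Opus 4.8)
The plan is to treat one Schubert variety at a time and to combine three ingredients: that $\widehat Y$ sits in a single fibre of $\widehat\psi$; that the distinguished hyperplane $H_0$ attached to $S$ is the locus cut out on $\mathcal C_q(Z)$ by the section defining the incidence divisor $D_S$; and that the meromorphic map attached to $S$ is base point free on $\mathcal C_q(D)$ with image avoiding $H_0$. So fix $S=\mathcal O\dot\cup E$ in $\mathcal S_{C_0}(B)$ and write $\varphi_S\colon\mathcal C_q(Z)\to\mathbb P(V_S^*)$, $s_S\in V_S$ and $H_0\subset\mathbb P(V_S^*)$ for the meromorphic map, the $B$-highest weight section and the hyperplane attached to $S$ by the construction above, and let $\mathrm{Bl}(\varphi_S)$ denote the base locus of $\varphi_S$. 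Denote by $\sigma\colon\widehat{\mathcal C_q(Z)}\to\mathcal C_q(Z)$ the modification underlying the normalized graph and by $\widehat\varphi_S$ the component of $\widehat\psi$ corresponding to $S$, a holomorphic lift of $\varphi_S$ through $\sigma$. We may assume that $\widehat Y$, and hence $Y=\sigma(\widehat Y)$, is irreducible. The first, purely formal, observation is that $\widehat\varphi_S$ is constant on $\widehat Y$, say with value $p\in\mathbb P(V_S^*)$, since $\widehat Y$ lies in one $\widehat\psi$-fibre.

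Next I would push this constancy down to $Y$. On $\mathcal C_q(Z)\setminus\mathrm{Bl}(\varphi_S)$ the map $\varphi_S$ is holomorphic, so there $\varphi_S\circ\sigma$ and $\widehat\varphi_S$ are holomorphic maps into $\mathbb P(V_S^*)$ which coincide on the dense locus where $\sigma$ is biholomorphic, hence coincide on all of $\sigma^{-1}(\mathcal C_q(Z)\setminus\mathrm{Bl}(\varphi_S))$. Since $\widehat Y$ surjects onto $Y$, it follows that $\varphi_S\equiv p$ on $Y\setminus\mathrm{Bl}(\varphi_S)$. Now $\widehat Y$ meets the lift of $\mathcal C_q(D)$, so $Y$ meets $\mathcal C_q(D)$; since $\varphi_S$ is base point free on $\mathcal C_q(D)$, we have $Y\cap\mathcal C_q(D)\subset Y\setminus\mathrm{Bl}(\varphi_S)$, so this set is nonempty, hence dense in the irreducible curve $Y$. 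Choosing $C_*\in Y\cap\mathcal C_q(D)$ we get $p=\varphi_S(C_*)$; and because $S\cap D\subset\mathcal O$ forces every cycle in $D$ to be disjoint from $E$, the section $s_S$ does not vanish on $C_*$, whence $p\notin H_0$.

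The loop is then closed by the incidence divisor. By construction $\{s_S=0\}$ is, set-theoretically, the incidence variety $D_S=\{C;\,C\cap E\neq\emptyset\}$, and for any $x\notin\mathrm{Bl}(\varphi_S)$ one has $\varphi_S(x)\in H_0$ if and only if the evaluation functional $t\mapsto t(x)$ on $V_S$ sends $s_S$ to $0$, i.e. if and only if $x\cap E\neq\emptyset$. Hence, if a cycle $C\in Y$ with $C\cap E\neq\emptyset$ were not in $\mathrm{Bl}(\varphi_S)$, then $\varphi_S(C)\in H_0$, while at the same time $\varphi_S(C)=p\notin H_0$ by the previous step; this contradiction shows $C\in\mathrm{Bl}(\varphi_S)$, which is the asserted statement (note $\mathrm{Bl}(\varphi_S)\subset B_\psi$).

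The one point that needs care is the descent across $\sigma$ in the second step: a point $C\in Y\setminus\mathrm{Bl}(\varphi_S)$ may still lie in the larger base locus $B_\psi$ of the full product map $\psi$, so $\sigma$ need not be biholomorphic over it. The identity $\widehat\varphi_S=\varphi_S\circ\sigma$ nonetheless persists at every point of $\sigma^{-1}(C)$: since $\varphi_S$ is holomorphic on a neighbourhood of $C$, both sides are holomorphic there and they agree on a dense subset, hence everywhere, as $\mathbb P(V_S^*)$ is Hausdorff. Beyond this, the argument is routine bookkeeping with the correspondence between $E$, the divisor $D_S$, the section $s_S$ and the hyperplane $H_0$, together with the standard fact that a proper modification is biholomorphic over a dense open set whose preimage is dense.
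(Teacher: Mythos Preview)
Your proof is correct and follows essentially the same line as the paper's: compare the value of the distinguished section $s_S$ at a cycle $\tilde C\in Y\cap\mathcal C_q(D)$ (where it is nonzero) with its value at $C$ (where it vanishes because $C\cap E\neq\emptyset$), and conclude that $\varphi_S(C)\neq\varphi_S(\tilde C)$ unless $C$ is a base point, contradicting that $\widehat Y$ lies in a single $\widehat\psi$-fibre. The paper compresses this into four sentences; your version is more explicit about the descent step---showing that the constancy of $\widehat\varphi_S$ on $\widehat Y$ forces $\varphi_S$ to be constant on $Y\setminus\mathrm{Bl}(\varphi_S)$ via the identity $\widehat\varphi_S=\varphi_S\circ\sigma$ there---which the paper leaves implicit in the phrase ``contrary to $\widehat Y$ being contained in a $\widehat\varphi$-fiber.''
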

\begin {proof}
Let $s$ be the section of the bundle $L$ which is the
$B$-highest weight vector in the representation space
$V$ defined by the incidence divisor $D_S$. Since $E$ is contained
in the complement of $D$, for those cycles 
$\tilde C\in Y\cap \mathcal {C}_q(D)$ it follows that $s(\tilde C)\not=0$.
But $s(C)=0$. Thus if $C$ were not a base point, it would follow that
$\varphi (C)\not=\varphi (\tilde C)$.  
This is contrary to $\widehat Y$ being contained in a 
$\widehat \varphi $-fiber.
\end {proof}
\noindent
{\it Proof of Proposition \ref{finite map}.}
In order to complete the proof, we will produce a  
Schubert variety $S\in \mathcal S_{C_0}(B)$ so that if $C\in Y$ as 
in the above Lemma, then $C$ is \emph{not} a base point of the associated
meromorphic map.  For this we consider the closure 
$\mathrm{cl}(G.C)$ of the $G$-orbit of $C$ in the full
cycle space $\mathcal {C}_q(Z)$ and let $C_1$ be a $B$-fixed
point in a closed $G$-orbit in $\mathrm{bd}(G.C)$.  Then
$C_1$ is the union of finitely many q-dimensional Schubert 
varieties. Recall that Poincar\'e duality in $Z$ is realized
by the intersection pairing of $B$-Schubert varieties with
Schubert varieties of the opposite Borel subgroup $B^*$. 
In particular, there is an (n-q)-dimensional $B^*$-Schubert
variety $S_1=\mathcal O\dot \cup E$ which has nonempty 
finite intersection with $C_1$ with $C_1\cap E=\emptyset $.

Since convergence at the level of cycles (regarded as points)
is the same thing as Hausdorff convergence, it follows
that if $gC$ is sufficiently near $C_1$ in the cycle space,
then $S_1$ has nonempty finite intersection with $gC$ with
$gC\cap E$ likewise being empty.
Consequently there is a proper algebraic subset $A$ of $G$
with the property that if $g\not\in A$, then $C$ has finite
nonempty intersection with $gS_1$ and $C\cap gE=\emptyset $.

Now $S_1$ is not a $B$-Schubert variety, but if we choose
$B$ carefully, the opposite Borel $B^*$ is also an 
Iwasawa-Borel subgroup which is $K_0$-conjugate to $B$. 
For the sake of completeness let us explain this choice
in some detail.  Let $\sigma $ be the involution 
which defines the maximal compact subalgebra $\mathfrak {g}_u$
and which commutes with the involution $\tau $ which defines
$\mathfrak {g}_0$ so that $\sigma \vert \mathfrak {g}_o=:\theta $
is the Cartan involution.  This defines the Cartan decomposition 
$\mathfrak {g}_0=\mathfrak {k}_0\oplus \mathfrak {p}_0$ with
$\mathfrak {a}_0$ defined to be a maximal Abelian subspace
of $\mathfrak {p}_0$.  Let $\mathfrak {m}_0$ be
the centralizer of $\mathfrak {a}_0$ in 
$\mathfrak {k}_0$ and choose a toral subalbra $\mathfrak {t}_0$
in $\mathfrak {m}_0$ so that 
$\mathfrak {h}_0=\mathfrak {t}_0\oplus \mathfrak {a}_0$ is
a real form of the $\sigma $-invariant Cartan subalgebra
$\mathfrak h=\mathfrak t\oplus \mathfrak a$ of $\mathfrak g$.
Finally define a system of positive roots for
$\mathfrak g$ with respect to $\mathfrak h$ so that
the direct sum $\mathfrak {n}_0^+$ of the positive restricted root spaces
defines our Iwasawa decomposition
$\mathfrak {g}_0=\mathfrak {k}_0\oplus 
\mathfrak {a}_0\oplus \mathfrak {n}^+_0$.
Now let $\mathfrak {b}$ be the Borel subalgebra in the minimal
parabolic subalgebra 
$\mathfrak {p}=\mathfrak {m}\oplus \mathfrak {a}\oplus \mathfrak {n}^+$
which consists of $\mathfrak {h}$ together with the positive
root spaces. This is a semidirect sum
$\mathfrak b=
\mathfrak b_{\mathfrak m}\ltimes (\mathfrak {a}\oplus \mathfrak {n}^+)$
where $\mathfrak {b}_{\mathfrak m}$ is a Borel subalgebra of 
$\mathfrak m$. The opposite Borel algebra $\mathfrak b^*$ which consists of 
$\mathfrak {h}$ together with the negative root spaces is of the form
$\mathfrak b^*=\mathfrak {b}_{\mathfrak m}^*\ltimes 
(\mathfrak {a}\oplus \mathfrak {n}^-)$ and
$\mathfrak {g}_0=\mathfrak {k}_0\oplus \mathfrak {a}_0\oplus 
\mathfrak  {n}_0^-$ is the corresponding Iwasawa-decomposition.

Since $K_0$ acts transitively on the Iwasawa-decompositions
with compact summand $\mathfrak {k}_0$, for some $k_0\in K_0$
it follows that $k_0S_1=S$, where $S\in \mathcal {S}_{C_0}(B)$.
As a result, for $g\not\in Ak_o^{-1}$ it follows that $C\cap gS$
is nonempty and finite, and $C\cap gE=\emptyset $.  But
the section $g(s)$ defined by the incidence divisor of $gE$ is
linearly equivalent to that defined by $S$ and $C\cap gE=\emptyset $
translates to $g(s)(C)\not=0$. Consequently $C$ is not a 
base point of the meromorphic map $\varphi $ defined by $S$.
\qed 
      
We now come to the final steps in the proof of
the hyperbolicity of the cycle space.

\medskip\noindent
{\it Proof of Theorem \ref{hyperbolicity}.} Let
$W$ be the image of holomorphic map 
$\widehat {\psi}:\widehat{\mathcal {C}_q(Z)}\to 
\mathbb P(V_1^*)\times \ldots \times \mathbb P(V_m^*)$
and $\widehat {\psi}_{Stein}:\widehat W\to W$ its Stein factorization.
Note that Proposition \ref{finite map} can be reformulated to
state that $\mathcal{C}_q(D)$ is naturally embedded in
$\widehat W$.  Recall that $\mathbb P(V_j^*)$ contains an
open subset $\Omega _j$ which is the complement of the
union of the hyperplanes in a family $\mathcal F_j$, which
is Kobayashi-hyperbolic and which contains the image
of the restriction of $\varphi _j$ to $\mathcal {C}_q(D)$.  
Thus the intersection $\Omega $ of the product
$\Omega _1\times \ldots \times \Omega _m$ with $W$ has
the same properties: It contains the image of $\mathcal {C}_q(D)$
(now regarded as a subset of $\widehat W$) and is Kobayashi-hyperbolic.
To complete the proof observe that the restriction of
$\widehat {\psi}_{Stein}$ to the preimage $\widehat \Omega $ of 
$\Omega $ is a proper finite-fibered holomorphic map.  Since
$\Omega $ is hyperbolic, it follows that $\widehat \Omega $
is likewise hyperbolic (\cite {K}, Proposition 3.2.11).  Finally,
since $\mathcal {C}_q(D)$ has been realized as an open
subset of $\widehat \Omega $, it is immediate that it
is hyperbolic as well.\qed

\section {Cycle reduction}
Here we discuss two equivariant holomorphic equivalence relations
which are defined by the cycles in $D$. The first has been introduced
in [H2], but our point of view here is different and 
for the sake of completeness we provide the essential
details.
\subsection* {Cycle connectivity}\label {cycle connectivity}
Let us say that two points $p,q\in D$ are \emph{connected by cycles} if
there are finitely many cycles $C_1,\ldots ,C_N\in \mathcal {C}_q(D)$
so that $p\in C_1$, $q\in C_N$ and the chain $C_1\cup \ldots \cup C_N$
is connected.  This notion defines an equivalence relation
where two points are equivalent whenever they are connected by
a chain of cycles. It is $G_0$-equivariant in the sense
that for all $g\in G_0$
$$
p\he q\Leftrightarrow g(p)\he g(q).
$$
Let $p_0$ be a base point with the property that $K_0.p_0=C_0$
is the base cycle. If $p$ is connected to $p_0$
by a chain $C_1\cup \ldots \cup C_N$ to $p_0$ with $p_0\in C_1$
and $p\in C_N$ and $k_0\in K_0$, then $k_0(p)$ is connected
to $p_0$ by the chain $C_0\cup k_0(C_1)\cup \ldots \cup k_0(C_N)$.
Thus the equivalence class $[p_0]$ is $K_0$-invariant. 

Now write $D=G_0/H_0$ where $H_0$ is the isotropy subgroup of
the point $p_0$.  The reduction $D\to \widehat D/\he $ defined
by cycle connectivity is $G_0$-equivariant and therefore it is
a homogeneous fibration $G_0/H_0\to G_0/I_0$. If $I_0=H_0$, in other
words if any two points can be connected by cycles, we say that 
$D$ is \emph{cycle connected}.
\begin {proposition}
If $D$ is not cycle connected, then $I_0=K_0$, i.e., 
the base $\widehat D$ of the cycle reduction 
is the associated $G_0$-Hermitian symmetric space. 
\end {proposition}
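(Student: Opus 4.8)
The plan is to analyze the homogeneous fibration $\pi:D=G_0/H_0\to G_0/I_0=\widehat D$ coming from the cycle-connectivity reduction, using the fact established just above that the fiber $F=I_0/H_0$ containing $p_0$ is $K_0$-invariant, so that $C_0=K_0.p_0\subset F$. Since $F$ is a connected complex submanifold of $D$ on which $I_0$ acts transitively and which contains the maximal compact $K_0$-orbit $C_0$, and since all cycles through $p_0$ are contained in the cycle-connectivity class $[p_0]=F$, the first step is to see that $F$ is itself a flag domain for the group $I_0$ (or rather its semisimple part) — i.e., $I_0$ is reductive with $K_0\subset I_0$, and $F$ is open in its ambient flag manifold — and moreover $F$ is \emph{cycle connected} by construction. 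The key structural input is that $K_0\subseteq I_0\subseteq G_0$ with $I_0$ the isotropy of a point in the base of a $G_0$-equivariant holomorphic fibration, hence $I_0$ is the stabilizer of a point in a complex homogeneous space, forcing $\mathfrak i_0$ to be an algebraic subalgebra whose complexification is the isotropy in a $G$-flag manifold quotient; combined with $\mathfrak k_0\subseteq\mathfrak i_0$ this severely restricts $\mathfrak i_0$.

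Next I would invoke the classification of subalgebras of $\mathfrak g_0$ containing $\mathfrak k_0$: by a standard fact (essentially the structure theory underlying parabolic subalgebras with $\theta$-stable Levi, cf. the Borel--de Siebenthal / Hermitian-type analysis), a proper subalgebra $\mathfrak i_0$ with $\mathfrak k_0\subsetneq\mathfrak i_0\subsetneq\mathfrak g_0$ whose complexification $\mathfrak i$ is the isotropy algebra of a point on a $G$-flag manifold quotient $Z\to Z'$ can only occur when $\mathfrak g_0$ is of Hermitian type, and in that case $\mathfrak i_0=\mathfrak k_0$ itself (the only such intermediate subalgebra is the maximal compact subalgebra, realizing the Hermitian symmetric space $G_0/K_0$). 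Concretely: the complexified fibration $Z\to Z'$ is $G$-equivariant, so is given by parabolics $Q\subset P$; the condition $C_0\subset F$ says $K$ stabilizes the point of $Z'$ under $F$, i.e.\ $K\subset P$, and the condition that $D$ is not cycle connected means $P\neq G$; the pair $(K\subset P)$ with $P$ parabolic, $K$ the complexification of a maximal compact, forces $G_0$ Hermitian and $P$ the maximal parabolic with $G/P$ the compact dual Hermitian symmetric space, whence $P\cap G_0=K_0$ and $\widehat D=G_0/K_0$.

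Finally, to close the argument I would verify the reverse inclusion is not possible, i.e.\ rule out $K_0\subsetneq I_0\subsetneq G_0$ strictly: here the mechanism is that $F=I_0/H_0$ is a cycle-connected flag domain containing $C_0$ as its base cycle, and by the previous step applied to $F$ in place of $D$ (or directly from $\dim F > q$ together with cycle connectivity) one shows $F$ cannot sit as a proper fiber unless it is all of $D$ — the point being that cycle connectivity of the fiber plus $G_0$-equivariance of the whole reduction forces $I_0$ to be its own cycle-connectivity stabilizer inside $G_0$, but the reduction was \emph{defined} by that stabilizer, giving $I_0=G_0$ unless $F$ degenerates to $C_0$, i.e.\ $\dim_{\mathbb C}F=q$ and $I_0=K_0$. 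The main obstacle I anticipate is the first part: rigorously showing that the non-cycle-connected case forces $\mathfrak g_0$ Hermitian and $\mathfrak i_0=\mathfrak k_0$, since this requires knowing precisely which intermediate subalgebras $\mathfrak k_0\subseteq\mathfrak i_0\subseteq\mathfrak g_0$ arise as isotropy algebras of $G$-equivariant holomorphic fibrations of flag domains — one must exclude, for instance, parabolic-type $\mathfrak i_0$ (which would make $\widehat D$ noncompact and non-Stein in a different way) by a holomorphicity/compactness argument on the fiber $F$, using that $F$ contains the compact complex submanifold $C_0$ of dimension $q=\dim_{\mathbb C}C_0$ and that any cycle-connected flag domain of dimension $>q$ is Stein (a fact from Part IV of \cite{FHW}), contradicting $C_0\subset F$ being positive-dimensional and compact unless $F=C_0$.
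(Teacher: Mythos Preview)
Your approach is far more elaborate than needed, and in fact the paper's proof is essentially one line that you have overlooked: since $G_0$ is assumed \emph{simple}, its maximal compact subgroup $K_0$ is in fact a \emph{maximal subgroup} of $G_0$ (not just maximal among compact subgroups). Once one has established $K_0\subseteq I_0$, as done just before the proposition, the inclusion $K_0\subseteq I_0\subseteq G_0$ immediately forces $I_0=K_0$ or $I_0=G_0$; the latter is the cycle-connected case, so if $D$ is not cycle connected one gets $I_0=K_0$ with no further work. All of your machinery---complexifying the fibration, locating parabolics $Q\subset P$ with $K\subset P$, classifying intermediate subalgebras $\mathfrak{k}_0\subset\mathfrak{i}_0\subset\mathfrak{g}_0$, analyzing $F$ as a flag domain in its own right---is a roundabout attempt to prove this maximality, and is not needed.

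There is also a genuine error in the final paragraph of your sketch: you assert that ``any cycle-connected flag domain of dimension $>q$ is Stein'' and attribute this to \cite{FHW}. This is backwards. In the paper's dichotomy, the cycle-connected flag domains are precisely those with $\mathcal{O}(D)\cong\mathbb{C}$; they are \emph{never} Stein (unless $q=0$, where the domain itself is the Hermitian symmetric space). The holomorphically convex (hence eventually Stein after Remmert reduction) case is exactly the \emph{non}-cycle-connected one. So the contradiction you propose---$F$ Stein yet containing the positive-dimensional compact $C_0$---does not arise, and that branch of your argument collapses. Fortunately, as noted above, none of this is required once you invoke the maximality of $K_0$ in the simple group $G_0$.
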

\begin {proof}
Above it was shown that $K_0$ is contained in the stabilizer of
the equivalence class $[p_0]$.  Recall that $K_0$ is not
only a maximal compact subgroup of $G_0$ but is in fact
a maximal subgroup.  Thus the stabilizer of $[p_0]$ is
either the full group $G_0$ or is $K_0$.  In the former
case $D$ is cycle connected and in the latter case
$[p_0]=C_0$.
\end {proof} 
Observe that in the case where $D$ is not cycle connected 
there is an open neighborhood $U$ of the identity 
in the complex $G$-isotropy subgroup $Q$ at $p_0$ which 
stabilizes the equivalence class $[p_0]=C_0$.  Since $U$
generates $Q$, it follows that $[p_0]$ is $Q$-invariant. Define
$\widehat Q$ as the stabilizer of $[p_0]$ in $G$ and note
that it contains both $K$ and $Q$
\begin {proposition} If $D$ is not cycle connected, then
the restriction $R$ of the fibration
$Z=G/Q\to G/\widehat Q=\widehat Z$ to the flag domain $D$ is the
quotient $G_0/H_0=D\to \widehat D=G_0/K_0$ defined by
cycle connectivity equivalence in the strong sense
that the fibers of $D\to \widehat D$  are the
same as the fibers of $Z\to \widehat Z$ over points of 
$\widehat D$.
\end {proposition}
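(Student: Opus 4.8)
The plan is to show that the $G$-equivariant fibration $Z = G/Q \to G/\widehat Q = \widehat Z$ restricts over the open set $\widehat D \subset \widehat Z$ to exactly the cycle-connectivity quotient $D = G_0/H_0 \to G_0/K_0 = \widehat D$. Two things must be checked: first, that $\widehat D$ is precisely the image of $D$ under $Z \to \widehat Z$ and that this image is the $G_0$-orbit $G_0/K_0$; second — and this is the substantive point — that the fiber of $Z \to \widehat Z$ over each point of $\widehat D$ is entirely contained in $D$, so that it coincides with the corresponding fiber of $D \to \widehat D$. The first point follows formally from the preceding discussion: we have already identified $\widehat Q \supset K$ and $\widehat Q \supset Q$, so $G_0 \cap \widehat Q \supset K_0$, and since $K_0$ is maximal in $G_0$ while $\widehat Q \ne G$ (the reduction is nontrivial because $D$ is not cycle connected), we get $G_0 \cap \widehat Q = K_0$. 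Hence the $G_0$-orbit of $[p_0]$ in $\widehat Z$ is $G_0/K_0$, which is the Hermitian symmetric space of noncompact type; it is open in $\widehat Z$ since $\dim_{\mathbb R} G_0/K_0 = \dim_{\mathbb R} G/\widehat Q$ (both equal $2\dim_{\mathbb C}\widehat Z$, using that $G_0/K_0$ carries a $G_0$-invariant complex structure of the right dimension, being the base of the homogeneous fibration $G_0/H_0 \to G_0/K_0$). So $\widehat D = G_0/K_0$.

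The heart of the matter is the fiber inclusion. Fix a point $\widehat p \in \widehat D$ and let $F = R^{-1}(\widehat p)$ be the fiber of $Z \to \widehat Z$ over it; I must show $F \subset D$. By $G_0$-homogeneity of $\widehat D$ it suffices to treat $\widehat p = [p_0]$, whose fiber in $Z$ is the $Q$-homogeneous projective manifold $F_0 := \widehat Q/Q$ through $p_0$ (here $\widehat Q \supsetneq Q$ forces $\dim F_0 > 0$). I would argue that $F_0$ is exactly the cycle-connectivity equivalence class of $p_0$ inside $D$. Recall from the discussion immediately preceding the proposition that there is a neighborhood $U$ of the identity in $Q$ whose action preserves the equivalence class $[p_0]$, and since $U$ generates $Q$ the class $[p_0]$ is $Q$-stable; moreover $K$ preserves $[p_0]$ because $K_0$ does and $[p_0]$ is the complex submanifold $C_0 = K_0/(K_0\cap H_0)$, which is $K$-invariant (it is the unique $K$-orbit in $Z$ contained in $D$, cf.\ the introduction). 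Therefore $[p_0]$ is invariant under the group generated by $K$ and $Q$, which is $\widehat Q$, so $[p_0] \supset \widehat Q \cdot p_0 = F_0$ as a set. Conversely $[p_0]$ is a single fiber of $D \to \widehat D$ by construction, hence $[p_0] \subset F_0$, giving $[p_0] = F_0$. In particular $F_0 = [p_0] \subset D$, which is the required inclusion for the basepoint fiber, and the general case follows by applying $g \in G_0$.

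The main obstacle, and the step I would spend the most care on, is the equality $[p_0] = F_0$ — specifically the inclusion $F_0 \subset [p_0]$, i.e.\ that the full complex fiber $\widehat Q/Q$ lies in a single cycle-connectivity class and not merely the part swept out by the local group $U$. The cleanest route is the generation argument just sketched: $U$ generates the connected group $Q^\circ$, and $K$ together with $Q$ generates $\widehat Q^\circ$ (one should note $\widehat Q$ is connected since it is a parabolic, being the stabilizer of a point in the $G$-flag manifold $\widehat Z$); so any element of $\widehat Q$ is a finite product of elements from $K$ and from neighborhoods of the identity in $Q$, and each such factor carries $[p_0]$ to itself, whence $\widehat Q$ stabilizes $[p_0]$ and $F_0 = \widehat Q\cdot p_0 \subset [p_0]$. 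A secondary technical point worth verifying is that $\widehat Q$ is genuinely parabolic — equivalently that $\widehat Z = G/\widehat Q$ is again a flag manifold — which follows because $\widehat Q \supset Q$ and overgroups of a parabolic are parabolic; this also justifies calling $Z \to \widehat Z$ a homogeneous fibration of flag manifolds, as claimed in the introduction's discussion of the Hermitian exceptional case. Once these are in place, the statement that the fibers of $D \to \widehat D$ agree with the fibers of $Z \to \widehat Z$ over $\widehat D$ is immediate.
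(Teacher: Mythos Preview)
Your argument is correct and follows the same two-step outline as the paper: identify the neutral fiber $\widehat Q/Q$ with the equivalence class $[p_0]=C_0$, then use maximality of $K_0$ in $G_0$ to pin down the isotropy of the image orbit as exactly $K_0$. The paper does this in three sentences: the neutral fiber of $Z\to\widehat Z$ is $C_0$ (so $\widehat Z$ is not a point), and since $K_0$ is the $G_0$-stabilizer of $C_0$, the image $R(D)$ is $G_0/K_0=\widehat D$.

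Where you go astray is in your final paragraph. You flag the inclusion $F_0=\widehat Q\cdot p_0\subset [p_0]$ as ``the main obstacle'' and spend effort on a generation argument, asserting along the way that $\widehat Q$ equals the group generated by $K$ and $Q$. That equality is neither proved nor needed: recall that $\widehat Q$ is \emph{defined} (in the sentence immediately preceding the proposition) as the stabilizer of $[p_0]$ in $G$, so $\widehat Q\cdot p_0\subset[p_0]$ is tautological. The reverse inclusion is equally direct, since $[p_0]=C_0=K\cdot p_0\subset\widehat Q\cdot p_0$. So what you call the main obstacle is in fact the trivial step; the only genuine content is the identification of the $G_0$-isotropy on the base, which both you and the paper handle via maximality of $K_0$.
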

\begin {proof}
The restriction $R$ maps $D$ onto a flag domain $D_1$ whose
$G_0$-isotropy at its base point contains $K_0$. Since
$D$ is not cycle connected, the neutral fiber of 
$G/Q=Z\to \widehat Z=G/\widehat Q$
is the cycle $C_0$; in particular, $\widehat Z$ is not
just a point.  Since the maximal subgroup
$K_0$ is the stabilizer of $C_0$ in $G_0$, it follows that
$D_1=\widehat D$.
\end {proof}
Let us summarize the results of this paragraph.  
\begin {corollary}
Under the assumption that $D$ is not a Hermitian symmetric space
of noncompact type, the following are equivalent:
\begin {enumerate}
\item
The flag domain $D$ is not cycle connected with 
reduction $G_0/H_0=D\to \widehat D=G_0/K_0$ defined
by cycle connectivity equivalence.
\item
The flag domain $D$ is holomorphically convex with nontrivial
Remmert reduction. In particular, 
the base $\widehat D$ of its Remmert reduction 
$G_0/H_0=D\to \widehat D=G_0/K_0$ is a Hermitian symmetric 
space $\widehat D$ which
is embedded as a $G_0$-orbit in its compact dual $\widehat Z$
and its neutral fiber is $C_0$.
\end {enumerate}
\end {corollary}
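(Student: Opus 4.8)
\textit{Proof proposal.} The plan is to prove the two implications $(1)\Rightarrow(2)$ and $(2)\Rightarrow(1)$ separately. In both directions the only analytic input beyond the results of $\S3$ established above is the classical fact that the Hermitian symmetric space $\widehat D=G_0/K_0$ of noncompact type is Stein (its Harish-Chandra realization as a bounded symmetric domain is a domain of holomorphy), together with the characterization of the Remmert reduction: a complex manifold is holomorphically convex precisely when it admits a proper surjective holomorphic map with connected fibers onto a Stein space, and such a map is then unique (the Remmert reduction).

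For $(1)\Rightarrow(2)$, assume $D$ is not cycle connected. The Propositions of $\S3$ then identify the cycle-connectivity reduction with the $G_0$-equivariant homogeneous fibration $D=G_0/H_0\to G_0/K_0=\widehat D$, which is the restriction to $D$ of the flag-manifold fibration $Z=G/Q\to G/\widehat Q=\widehat Z$ and whose neutral fiber is the base cycle $C_0=K_0/H_0$. Every fiber of this homogeneous fibration is a $G_0$-translate of $C_0$, hence compact and connected, so the map is proper with connected fibers; moreover $\pi_*\mathcal O_D=\mathcal O_{\widehat D}$ since holomorphic functions are constant along the compact connected fibers. As $\widehat D$ is Stein it follows that $D$ is holomorphically convex with $D\to\widehat D$ as its Remmert reduction. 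This Remmert reduction is nontrivial because, as was shown in the proof above, $\widehat Z$ is not a point, so the fiber $C_0$ of $Z\to\widehat Z$ is positive-dimensional. The remaining assertions of (2) — that $\widehat D$ is the Hermitian symmetric space realized as a $G_0$-orbit in its compact dual $\widehat Z$ with neutral fiber $C_0$ — are exactly what the Propositions of $\S3$ supply.

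For $(2)\Rightarrow(1)$, let $r:D\to\widehat D'$ be the Remmert reduction, so $r$ is proper surjective with connected fibers onto a Stein space. First, $\widehat D'$ cannot be a single point, for otherwise $D$ would be compact, contrary to our standing assumption. Next, $r$ contracts every connected compact analytic subset $A\subset D$ to a point: by the proper mapping theorem $r(A)$ is a compact connected analytic subset of the Stein space $\widehat D'$, hence finite, hence a single point. Consequently, for any connected chain of $q$-cycles $C_1\cup\ldots\cup C_N$ in $D$ the image $r(C_1\cup\ldots\cup C_N)$ is one point, so $r(p)=r(q)$ whenever $p$ and $q$ are connected by cycles. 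Were $D$ cycle connected, this would force $\widehat D'$ to be a point, a contradiction. Hence $D$ is not cycle connected, and the Proposition of $\S3$ yields the cycle-connectivity reduction $D=G_0/H_0\to G_0/K_0$ together with all the structure recorded in (1) and in the ``in particular'' clause of (2). This latter map is again proper with connected fibers onto the Stein space $G_0/K_0$, so by the uniqueness of the Remmert reduction it coincides with $r$; in particular $\widehat D'=G_0/K_0$. (The hypothesis that $D$ is not itself a noncompact Hermitian symmetric space is needed only to exclude the degenerate case $H_0=K_0$, where $q=0$, both reductions are trivial, and the two statements fail to be equivalent.)

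Thus the proof is largely bookkeeping once the heavy results of $\S\S2$--$3$ are in hand. The only point requiring care is the matching of the two a priori different quotients in $(2)\Rightarrow(1)$: one must observe both that the Remmert reduction contracts every cycle-equivalence class and that the cycle reduction, once known to map properly with connected fibers onto the Stein space $G_0/K_0$, is forced by its universal property to be the Remmert reduction. This sandwiching of the cycle reduction between the two Stein quotients is the crux of the argument.
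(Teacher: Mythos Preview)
Your argument is correct and follows the same route as the paper: chains of cycles collapse under any proper map to a Stein base, and conversely the cycle reduction $D\to G_0/K_0$ has compact connected fibers over a Stein base and is therefore the Remmert reduction. The only difference is organizational: for $(2)\Rightarrow(1)$ the paper simply cites \cite{W} and \cite{FHW} for the explicit form of the Remmert reduction of a holomorphically convex flag domain, whereas you first establish that $D$ is not cycle connected and then recover that explicit form from the Propositions of \S3 together with the uniqueness of the Remmert reduction. Your version is slightly more self-contained, the paper's slightly shorter; the content is the same.
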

\begin {proof}
If $D$ is holomorphically convex, then the stated properties
concerning its Remmert reduction are known (\cite {W}, see also 
\cite {FHW}, p. 63). Thus, in order to show that 2.) implies 1.)
we must only note that, since its base 
is Stein, the Remmert reduction maps every chain of cycles in
$D$ to a point. Therefore $D$ is not cycle
connected. The direction 1.) implies 2.) follows immediately
from that fact that the fiber of the reduction defined
by cycle connectivity equivalence is the compact variety
$C_0$ and the base is the Stein Hermitian symmetric space.
\end {proof}
\subsection* {The group $\mathbf{\mathrm {Aut}_{\mathcal O}(D)}$ 
in the holomorphicallyconvex case}
One of the purposes of this article is to describe in detail
the automorphism groups of flag domains. The case where
$D$ is not cycle connected is special in that 
$\mathrm {Aut}_{\mathcal O}(D)$ is not a Lie group. Let us
now describe this group.

Assume that $D$ is not cycle connected with Remmert reduction
$D\to \widehat D$. Note that since
the Hermitian symmetric space $\widehat D$ is Stein and 
retractible, the bundle $D\to \widehat D$ is holomorphically
trivial.  Thus $D$ is biholomorphically equivalent to the
product $C_0\times \widehat D$ of the base cycle and a Hermitian
symmetric space of noncompact type.  The latter can be
realized as a bounded domain and therefore its automorphism
group is a (known) Lie group acting smoothly on $D$ as a group 
of holomorphic transformations.
 
Since $\mathrm{Aut}_{\mathcal O}(C_0)$ is a complex Lie group,
we may consider the space 
$\mathrm{Hol}(\widehat D,\mathrm {Aut}_{\mathcal O}(C_0))$
of holomorphic maps of the base of the Remmert reduction 
to this group. Every 
$\varphi \in \mathrm{Hol}(\widehat D,\mathrm {Aut}_{\mathcal O}(C_0))$
defines a holomorphic automorphism of the product
$C_0\times \widehat D$, namely by
$(z,w)\mapsto (\varphi (w)(z),w)$.  Conversely, the projection
$\pi :C_0\times \widehat D\to \widehat D$ is equivariant
with respect to $\mathrm {Aut}_{\mathcal O}(C_0\times \widehat D)$
and its ineffectivity on $\widehat D$ is exactly 
$\mathrm{Hol}(\widehat D,\mathrm {Aut}_{\mathcal O}(C_0))$.
Thus, although it is quite large, $\mathrm{Aut}_{\mathcal O}(D)$
can be concretely described.
\begin {proposition}\label {infinite-dimensional}
If $D$ is not cycle connected with cycle equivalence reduction
given by $D\to \widehat D$, then
$$
\mathrm {Aut}_{\mathcal O}(D)\cong 
\mathrm{Hol}(\widehat D,\mathrm {Aut}_{\mathcal O}(C_0))
\rtimes \mathrm {Aut}_{\mathcal O}(\widehat D)\,.
$$
\end {proposition}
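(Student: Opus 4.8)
The plan is to realize the claimed semidirect-product decomposition by exploiting the structure already exposed in the preceding discussion: $D\cong C_0\times\widehat D$ with $\widehat D$ a bounded symmetric domain, the projection $\pi\colon D\to\widehat D$ being the Remmert reduction. First I would observe that since $\widehat D$ is Stein and the Remmert reduction is characterized by the maximal ideal of plurisubharmonic exhaustions, $\pi$ is intrinsic to the complex structure of $D$; hence every $F\in\mathrm{Aut}_{\mathcal O}(D)$ descends to a unique $\bar F\in\mathrm{Aut}_{\mathcal O}(\widehat D)$ with $\pi\circ F=\bar F\circ\pi$. This gives a group homomorphism $\rho\colon\mathrm{Aut}_{\mathcal O}(D)\to\mathrm{Aut}_{\mathcal O}(\widehat D)$, and the real content of the proposition splits into three assertions: (i) $\rho$ is surjective and admits a splitting, (ii) $\ker\rho\cong\mathrm{Hol}(\widehat D,\mathrm{Aut}_{\mathcal O}(C_0))$, and (iii) the splitting together with $\ker\rho$ exhibits $\mathrm{Aut}_{\mathcal O}(D)$ as the stated semidirect product.

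For (i), use the product realization $D=C_0\times\widehat D$: any $\psi\in\mathrm{Aut}_{\mathcal O}(\widehat D)$ lifts to $(z,w)\mapsto(z,\psi(w))$, which is visibly a biholomorphism of $D$ covering $\psi$, so $\rho$ is surjective with a (homomorphic) section $\iota\colon\mathrm{Aut}_{\mathcal O}(\widehat D)\hookrightarrow\mathrm{Aut}_{\mathcal O}(D)$. The image of $\iota$ is a subgroup meeting $\ker\rho$ trivially and generating, together with $\ker\rho$, the whole group — this is the standard criterion for an internal semidirect product, so (iii) reduces to (i) and (ii). For (ii), let $F\in\ker\rho$, so $\pi\circ F=\pi$; then $F$ preserves each fiber $C_0\times\{w\}$ and restricts there to some biholomorphism, which (identifying the fiber with $C_0$) is an element $\varphi(w)\in\mathrm{Aut}_{\mathcal O}(C_0)$. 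Thus $F(z,w)=(\varphi(w)(z),w)$ with $\varphi\colon\widehat D\to\mathrm{Aut}_{\mathcal O}(C_0)$ a set-theoretic map; conversely every such $\varphi$ that is holomorphic yields an element of $\ker\rho$, as already noted in the text. It remains to check that $\varphi$ is automatically holomorphic: this follows because $F$ is holomorphic and $\mathrm{Aut}_{\mathcal O}(C_0)$ is a complex Lie group acting holomorphically on $C_0$, so the map $w\mapsto\varphi(w)$, being determined by the holomorphic dependence of $F|_{C_0\times\{w\}}$ on $w$, is holomorphic into that Lie group (one can pin this down by evaluating at finitely many points of $C_0$ that determine an automorphism, or by using that $\mathrm{Aut}_{\mathcal O}(C_0)\subset\mathrm{Aut}(Z)$ sits inside a projective group and the matrix entries vary holomorphically). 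Hence $\ker\rho\cong\mathrm{Hol}(\widehat D,\mathrm{Aut}_{\mathcal O}(C_0))$, and conjugation by $\iota(\psi)$ sends $\varphi$ to $w\mapsto\varphi(\psi^{-1}(w))$, which is the natural action defining the semidirect product.

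The main obstacle — and the step deserving the most care — is the invariance of the Remmert reduction, i.e., that an arbitrary holomorphic automorphism of $D$ must preserve the fibration $\pi$. Once $D$ is known to be holomorphically convex with Stein base, this is formal: the Remmert reduction is the quotient by the equivalence relation "$p\sim p'$ iff $f(p)=f(p')$ for all $f\in\mathcal O(D)$", which is manifestly preserved by any element of $\mathrm{Aut}_{\mathcal O}(D)$, and uniqueness of the Stein factorization then yields the covering automorphism $\bar F$. So in fact the only genuine work is bookkeeping: setting up $\rho$, its section, identifying the kernel, and confirming the holomorphy of $\varphi$ and the twisting in the semidirect product. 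I would present it exactly in that order.
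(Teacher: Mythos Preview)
Your proposal is correct and follows essentially the same route as the paper: the paper's argument is the short paragraph immediately preceding the Proposition, which records the product splitting $D\cong C_0\times\widehat D$, asserts that $\pi$ is $\mathrm{Aut}_{\mathcal O}(D)$-equivariant, and identifies the ineffectivity on $\widehat D$ with $\mathrm{Hol}(\widehat D,\mathrm{Aut}_{\mathcal O}(C_0))$. You simply flesh out the steps the paper leaves implicit --- the intrinsic nature of the Remmert reduction to justify equivariance, the explicit section $\psi\mapsto(\mathrm{id}\times\psi)$ giving surjectivity and the splitting, and the holomorphy of $w\mapsto\varphi(w)$ --- none of which constitutes a genuinely different approach.
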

The following remark which concludes this paragraph will
be of use in the next section.
\begin {proposition}
If $Z\to Z_1$ is a $G$-equivariant fibration which induces
a mapping $D\to D_1$ of flag domains and $D$ is cycle connected,
then $D_1$ is cycle connected.
\end {proposition}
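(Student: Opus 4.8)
\emph{Proof proposal.} The plan is to prove the contrapositive by a short contradiction argument, combining the description of non-cycle-connected flag domains obtained above with the fact that the relevant reduction has a Stein base. So suppose that $D$ is cycle connected but, contrary to the assertion, that $D_1$ is \emph{not} cycle connected; the aim is to contradict the noncompactness of $G_0$.

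First I would write the equivariant fibration as $\pi\colon Z=G/Q\to Z_1=G/Q_1$ with $Q\subseteq Q_1$. The induced map $\pi|_D\colon D\to D_1$ is $G_0$-equivariant and holomorphic, and it is surjective because $D$ is a single $G_0$-orbit, so that its image is the flag domain $D_1$. Now $D_1$ is itself a flag domain of the noncompact simple group $G_0$, and since $K_0$ is a maximal subgroup of $G_0$ the Proposition above, that a non-cycle-connected flag domain has cycle-connectivity reduction onto the associated Hermitian symmetric space $G_0/K_0$, applies to $D_1$. Thus the reduction of $D_1$ is a $G_0$-equivariant holomorphic fibration $q_1\colon D_1\to\widehat D_1=G_0/K_0$, and its base is the Hermitian symmetric space of noncompact type, in particular a Stein space (a bounded symmetric domain). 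Composing, I obtain a surjective $G_0$-equivariant holomorphic map $\Psi:=q_1\circ(\pi|_D)\colon D\to G_0/K_0$.

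To finish, I would repeat the argument already used above in the proof of the Corollary (the step showing that a holomorphically convex flag domain fails to be cycle connected): since $G_0/K_0$ is Stein, every compact irreducible complex subvariety of it is a single point, so each irreducible component of each cycle $C\in\mathcal{C}_q(D)$ is collapsed by $\Psi$. Hence any connected chain $C_1\cup\dots\cup C_N$ of cycles in $D$ is mapped by $\Psi$ onto a connected finite set, i.e. onto a single point. Since $D$ is cycle connected, any two of its points are joined by such a chain, so $\Psi$ is constant. This contradicts the surjectivity of $\Psi$ onto $G_0/K_0$, which is positive-dimensional because $G_0$ is noncompact. Therefore $D_1$ must be cycle connected.

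The only step requiring care, and hence the main (though modest) obstacle, is the first reduction: one must check that the Proposition on non-cycle-connected flag domains genuinely applies to $D_1$ and produces the \emph{Stein} Hermitian base $G_0/K_0$, and one should dispose of the degenerate possibility $D_1=Z_1$ (which is harmless, since a holomorphic surjection from a compact space onto the noncompact $G_0/K_0$ cannot exist, so such a $D_1$ is automatically cycle connected). Once this is in place, the collapsing of chains and the dimension count conclude the proof at once.
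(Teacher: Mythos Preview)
Your argument is correct, and it takes a route that is close in spirit to the paper's but organized differently. The paper argues via function theory: if $D_1$ is not cycle connected, then by the preceding Corollary $D_1$ is holomorphically convex, hence $\mathcal O(D_1)\not\cong\mathbb C$; pulling back a nonconstant holomorphic function along $D\to D_1$ gives $\mathcal O(D)\not\cong\mathbb C$, and then Wolf's dichotomy (\cite{W}) forces $D$ to be holomorphically convex, contradicting cycle connectivity. You instead work directly with the geometric reduction $D_1\to G_0/K_0$ established in the earlier Propositions, compose it with $D\to D_1$, and use Steinness of the Hermitian symmetric base to collapse chains of cycles. Your approach has the virtue of being self-contained within the paper (it does not invoke \cite{W}), while the paper's version is shorter once that external dichotomy is granted. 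Your treatment of the degenerate case $D_1=Z_1$ is adequate though slightly informal; under the paper's standing hypotheses this case does not actually arise, but your observation that a holomorphic surjection from a compact space to the noncompact $G_0/K_0$ is impossible disposes of it cleanly in any event.
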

\begin {proof}
If $D_1$ is not cycle connected, then it is holomorphically
convex; in particular, $\mathcal O(D_1)\not\cong \mathbb C$.
Thus $\mathcal O(D)\not\cong \mathbb C$ and as a result
$D$ is likewise holomorphically convex (\cite {W}).
\end {proof}
\noindent
{\bf Remark.} It should be noted that if we had defined the cycle connectivity
equivalence relation using only the cycles in $\mathcal M_D$,
i.e., those of the form $g(C_0)$ for $g\in G$, then the
entire discussion above could have been repeated. In particular,
$D$ is cycle connected in this sense if and only if it is
cycle connected in the weaker sense where we
use cycles from the full cycle space $\mathcal C_q(D)$.\qed 
\subsection* {Cycle separation}
The \emph{cycle separation} equivalence relation is defined
by $p\he q$ if and only if every cycle in $\mathcal C_q(D)$
which contains $p$ also contains $q$ and vice versa.  In other
words, if $\mathcal F_p$ denotes the family of cycles which contain
$p$, then $p\he q$ if and only if $\mathcal F_p=\mathcal F_q$.
Since $g(\mathcal F_p)=\mathcal F_{g(p)}$ for all 
$g\in \mathrm {Aut}_{\mathcal O}(D)$, 
the equivalence relation is $\mathrm {Aut}_{\mathcal O}(D)$-equivariant. 
In particular, it is $G_0$-equivariant. Therefore, 
if $D=G_0/H_0$, where $H_0$ is the isotropy group of the 
neutral point $p_0$ in $D$, then the reduction $D\to D/\he =:\tilde D$ 
is a $G_0$-fibration $G_0/H_0=D\to \tilde D=G_0/I_0$ where 
$I_0$ is the stabilizer of the equivalence class $[p_0]$.   

For $p\in D$ define 
$\mathcal I_p:=\cap_{C\in \mathcal F_p}C$ and observe that
$$
[p_0]=\cap_{p\in {\mathcal I}_{p_0}}{\mathcal I}_p\,.
$$
The following is an immediate consequence of this description.
\begin {proposition}
The cycle separation equivalence classes $[p]$ are 
closed compact complex analytic subvarieties of $D$.
In particular, if the base cycle $C_0$ is defined by
$K_0.p_0$, then $[p_0]$ is contained in $C_0$.
\end {proposition}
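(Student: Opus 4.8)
The plan is to reduce the Proposition to two standard facts of complex‑analytic geometry and then feed them into the description of the equivalence classes recorded just above. The two facts are: (i) every cycle $C\in\mathcal{C}_q(D)$ is, as a point set, a compact complex‑analytic subvariety of $D$; and (ii) an arbitrary intersection of closed analytic subvarieties of a complex space is again a closed analytic subvariety. Granting these, the identity $[p_0]=\bigcap_{p\in\mathcal{I}_{p_0}}\mathcal{I}_p$ observed above, together with the $G_0$‑equivariance of cycle separation, finishes the argument.

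First I would dispose of the two facts. For (i): a cycle $C=n_1X_1+\ldots+n_mX_m$ has support $X_1\cup\ldots\cup X_m$, a compact analytic subvariety of $Z$, and by the definition of $\mathcal{C}_q(D)$ this support is contained in $D$, so it is a compact analytic subvariety of $D$. I would also note that $\mathcal{F}_p\ne\emptyset$ for every $p\in D$: since $G_0$ is transitive on $D$ and $p_0\in C_0$, any $p=g\,p_0$ with $g\in G_0$ lies on the cycle $gC_0\in\mathcal{C}_q(D)$. Hence $\mathcal{I}_p=\bigcap_{C\in\mathcal{F}_p}C$ is a closed subset of a single compact cycle, so it is compact, and by (ii) it is analytic; thus $\mathcal{I}_p$ is a compact analytic subvariety of $D$ for every $p$. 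Fact (ii) is the one genuinely technical point, and I would isolate it as a lemma: near a point $z$ the germs at $z$ of the members of the family satisfy the descending chain condition — analytic germs at $z$ correspond order‑reversingly to radical ideals of the Noetherian local ring $\mathcal{O}_{Z,z}$ — so a minimal finite subintersection already equals the full intersection in a neighbourhood of $z$; consequently the intersection coincides locally with a finite intersection of the given subvarieties and is therefore analytic.

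It then remains to assemble the pieces. Since $p_0\in\mathcal{I}_{p_0}$, the identity $[p_0]=\bigcap_{p\in\mathcal{I}_{p_0}}\mathcal{I}_p$ exhibits $[p_0]$ as an intersection of compact analytic subvarieties all contained in $\mathcal{I}_{p_0}$: indeed $p\in\mathcal{I}_{p_0}$ forces $\mathcal{F}_{p_0}\subseteq\mathcal{F}_p$, hence $\mathcal{I}_p\subseteq\mathcal{I}_{p_0}$. By (ii) again, $[p_0]$ is a compact analytic subvariety of $D$ with $[p_0]\subseteq\mathcal{I}_{p_0}$; and since $C_0=K_0.p_0$ belongs to $\mathcal{F}_{p_0}$ we get $\mathcal{I}_{p_0}\subseteq C_0$, whence $[p_0]\subseteq C_0$, which is the ``in particular'' assertion. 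For an arbitrary class, transitivity of $G_0$ on $D$ and the $G_0$‑equivariance of cycle separation give $[p]=[g\,p_0]=g\,[p_0]$ for a suitable $g\in G_0$, and since $g$ acts on $D$ as a biholomorphism each $[p]$ is again a compact analytic subvariety of $D$. The only step I expect to require real care is the lemma behind fact (ii); everything else is formal manipulation of the inclusions among the families $\mathcal{F}_p$.
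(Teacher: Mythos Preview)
Your proposal is correct and follows exactly the approach the paper intends: the paper presents the proposition as ``an immediate consequence'' of the description $[p_0]=\bigcap_{p\in\mathcal I_{p_0}}\mathcal I_p$ and gives no further argument, so you have simply supplied the details it suppresses --- that cycles are compact analytic subvarieties of $D$, that arbitrary intersections of analytic sets remain analytic by local Noetherianity, that $C_0\in\mathcal F_{p_0}$ forces $[p_0]\subset\mathcal I_{p_0}\subset C_0$, and that $G_0$-equivariance transports this to every class. There is no alternative route here to compare against.
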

Using this proposition we now show that reduction by
cycle separation can be extended to a $G$-equivariant
fibration $Z\to \tilde Z$ of the ambient projective space.
\begin {lemma}
The equivalence class $[p_0]$ is invariant with respect
to the complex $G$-isotropy group $G_{p_0}=Q$.
\end {lemma}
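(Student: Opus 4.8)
The plan is to reduce the assertion to the connectedness of $Q$ by producing a neighbourhood of the identity in $Q$ which already stabilizes $[p_0]$, in the same spirit as the cycle connectivity case.

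The first and main step is to rewrite $[p_0]$ as a \emph{finite} intersection of cycles that pass through $p_0$. Starting from the description $[p_0]=\bigcap_{p\in\mathcal I_{p_0}}\mathcal I_p$ and the fact (the preceding proposition) that $[p_0]$ is a compact analytic subvariety of $C_0$, I would use that $Z$, being projective, is noetherian, so that any decreasing intersection of closed analytic subsets stabilizes. Applying this first to the intersection over $p\in\mathcal I_{p_0}$, and then, for each of the finitely many relevant points $p_i\in\mathcal I_{p_0}$, to $\mathcal I_{p_i}=\bigcap_{C\in\mathcal F_{p_i}}C$, one obtains finitely many cycles $C_1,\dots,C_N\in\mathcal C_q(D)$ with $[p_0]=C_1\cap\dots\cap C_N$ (intersection of supports). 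The point to observe is that each $C_\ell$ contains all of $[p_0]$, because $[p_0]$ is the total intersection, and hence $p_0\in[p_0]\subset C_\ell$; thus every $C_\ell$ lies in the family $\mathcal F_{p_0}$ of cycles in $\mathcal C_q(D)$ through $p_0$, and in particular $\mathcal I_{p_0}=\bigcap_{C\in\mathcal F_{p_0}}C\subset C_\ell$ for every $\ell$.

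Next I would produce the neighbourhood. Since each $C_\ell$ is compact and $D$ is open, the set $\{u\in Q:\ u(C_\ell)\subset D\}$ is an open neighbourhood of $e$ in $Q$; let $U$ be a symmetric open neighbourhood of $e$ contained in the intersection of these sets over $\ell=1,\dots,N$. For $u\in U$ and each $\ell$, the image $u(C_\ell)$ is again a $q$-dimensional cycle (the connected group $G$ preserves the component $\mathcal C_q(Z)$) and it is contained in $D$, so $u(C_\ell)\in\mathcal C_q(D)$; moreover $p_0=u(p_0)\in u(C_\ell)$ because $p_0\in C_\ell$, so $u(C_\ell)\in\mathcal F_{p_0}$. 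Therefore $[p_0]\subset\mathcal I_{p_0}\subset u(C_\ell)$ for every $\ell$, whence $[p_0]\subset\bigcap_\ell u(C_\ell)=u\bigl(\bigcap_\ell C_\ell\bigr)=u([p_0])$. Applying this with $u^{-1}\in U$ in place of $u$ and then acting by $u$ gives the reverse inclusion $u([p_0])\subset[p_0]$, so $u([p_0])=[p_0]$ for all $u\in U$.

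Finally, since $Q$ is a connected group and $U$ is a neighbourhood of the identity, $U$ generates $Q$, and hence $q([p_0])=[p_0]$ for every $q\in Q$, which is the claim. The only genuinely delicate point is the first step — passing from the a priori infinite intersection defining $[p_0]$ to a finite intersection of cycles \emph{through $p_0$}; once that is available, the local invariance is a soft compactness argument and the globalization is immediate from connectedness of $Q$.
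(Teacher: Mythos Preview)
Your argument is correct and takes a genuinely different route from the paper. The paper argues pointwise by contradiction: having chosen a neighbourhood $U$ of the identity in $Q$ with $U.[p_0]\subset D$, it supposes $u(p)\notin[p_0]$ for some $u\in U$ and $p\in[p_0]$, produces a cycle separating $u(p)$ from $p_0$, and translates this cycle by $u^{-1}$ to obtain a cycle separating $p$ from $p_0$, a contradiction. Your approach instead first invokes noetherianity of the projective manifold $Z$ to write $[p_0]$ as a \emph{finite} intersection $C_1\cap\dots\cap C_N$ of cycles through $p_0$, and then argues directly by set inclusion after choosing $U$ small enough that every $u(C_\ell)$ remains in $D$.

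What your approach buys is transparency in the choice of $U$: since only finitely many compact cycles are involved, the condition $u(C_\ell)\subset D$ for all $\ell$ is manifestly open in $u$, and every translated cycle used in the argument is guaranteed to lie in $\mathcal C_q(D)$. In the paper's contradiction argument, the separating cycle $C_1$ (or $C_2$) depends on $u$ and $p$, and one must check that $u^{-1}(C_1)$ still lies in $D$; this is not obviously ensured by the single condition $U.[p_0]\subset D$, so your reduction to finitely many cycles neatly sidesteps that issue. The paper's argument, on the other hand, is shorter and works directly with the definition of the equivalence relation without the preliminary noetherian reduction. Both approaches share the same endgame: once local invariance near the identity is established, connectedness of the parabolic subgroup $Q$ gives global invariance.
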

\begin {proof}
If $U$ is a sufficiently 
small open neighborhood of the identity in 
$Q$, then $U.[p_0]\subset D$. If
$u(p)\not\in [p_0]$ for some $u\in U$ and some $p\in [p_0]$,
then there is either a cycle $C_1$ which contains $p_0$ and not
$u(p)$ or a cycle $C_2$ which contains $u(p)$ and not $p_0$.
In the first case, contrary to $p\in [p_0]$ the cycle $u^{-1}(C_1)$
contains $p_0$ but not $p$. In the second case, $C_2$ is not of 
the form $u(C)$ for some cycle containing $p_0$, because 
$p_0\not\in C_2$.  Thus $u^{-1}(C_2)$
is a cycle containing $p$ but not $p_0$ and as above
this is contrary to $p\in [p_0]$. Consequently $u(p)\in [p_0]$
for all $u\in U$ and $p\in [p_0]$ and, since $U$ generates $Q$
as a group, it follows that $[p_0]$ is $Q$-invariant.
\end {proof}
Therefore the stabilizer $\tilde Q$ of $[p_0]$ in $G$ is 
a parabolic subgroup of $G$ which defines a holomorphic fibration
$G/Q=Z\to \tilde Z:=G/\tilde Q$.  Since $\tilde Q\supset I_0$
and $I_0$ acts transitively on $[p_0]$, it follows that
$\tilde Q/Q=[p_0]$.  This information can be summarized
as follows.
\begin {proposition}
The restriction of the holomorphic fibration 
$\pi:G/Q=Z\to \tilde Z=G/\tilde Q$ to $D$ is the
reduction $\pi_0:G_0/H_0=D\to \tilde D=G/I_0$ of $D$
by cycle separation equivalence. For $p\in D$ the
fibers of the two maps agree: $\pi^{-1}(\pi (p))=\pi_0^{-1}(\pi_0(p))$.  
In particular, $\tilde D$ is a flag domain in $\tilde Z$.
\end {proposition}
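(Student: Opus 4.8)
The plan is to deduce the final Proposition directly from the preceding Lemma together with the two Propositions that precede it. First I would invoke the Lemma, which tells us that the cycle separation equivalence class $[p_0]$ is $Q$-invariant; hence its stabilizer $\tilde Q$ in $G$ is a subgroup of $G$ that contains the parabolic $Q$. Since any subgroup of $G$ containing a parabolic is itself parabolic, $\tilde Q$ is parabolic and the inclusion $Q\subset \tilde Q$ induces a $G$-equivariant holomorphic fibration $\pi:G/Q=Z\to \tilde Z=G/\tilde Q$. This is the ambient fibration we want to restrict to $D$.

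Next I would identify the fibers. The neutral fiber of $\pi$ is $\tilde Q/Q$, and since $\tilde Q$ contains $I_0$ (the $G_0$-stabilizer of $[p_0]$, which lies in $\tilde Q$ because $G_0\subset G$ and $I_0$ fixes $[p_0]$ by definition) and $I_0$ already acts transitively on $[p_0]$, we get $\tilde Q/Q=[p_0]$. Translating by $G_0$: for $p\in D$ the fiber $\pi^{-1}(\pi(p))$ equals $g\cdot[p_0]=[p]$ for an appropriate $g\in G_0$ with $g(p_0)=p$; here one uses that the cycle separation relation is $G_0$-equivariant, so $g([p_0])=[g(p_0)]=[p]$. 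On the other hand, $\pi_0^{-1}(\pi_0(p))$ is by definition the cycle separation class $[p]$ as well. Thus the fibers of $\pi$ over points of $D$ coincide exactly with the cycle separation classes, i.e. with the fibers of $\pi_0$, and the restriction of $\pi$ to $D$ is precisely the reduction $\pi_0:G_0/H_0=D\to \tilde D=G_0/I_0$. Since $D$ is open in $Z$ and $\pi$ is an open map (being a locally trivial fibration of flag manifolds), the image $\tilde D=\pi(D)$ is open in $\tilde Z$; it is a single $G_0$-orbit because $D$ is, so $\tilde D$ is a flag domain in $\tilde Z$.

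The one point that deserves care — and which I expect to be the main (though modest) obstacle — is making the matching of fibers over \emph{all} of $D$ rigorous rather than just at $p_0$: one must check that for every $p\in D$ the full $\pi$-fiber through $p$ stays inside $D$ and equals $[p]$, not merely that $[p]\subseteq \pi^{-1}(\pi(p))$. This follows by the $G_0$-homogeneity argument above, using that $G_0$ acts transitively on $D$ and equivariantly on both fibrations, so that the equality $\tilde Q/Q=[p_0]$ of neutral fibers propagates to every fiber over $D$; in particular each such fiber is the compact subvariety $[p]\subset C_{K_0(p)}$ guaranteed by the earlier Proposition on cycle separation classes, hence contained in $D$. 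With the fibers identified, the remaining assertions (that $\pi|_D$ \emph{is} $\pi_0$, and that $\tilde D$ is a flag domain) are immediate, so the proof is short.
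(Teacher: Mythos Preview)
Your proposal is correct and follows essentially the same route as the paper. In fact the paper does not supply a separate proof block for this Proposition at all: the short paragraph immediately preceding it (``Therefore the stabilizer $\tilde Q$ of $[p_0]$ in $G$ is a parabolic subgroup \ldots\ Since $\tilde Q\supset I_0$ and $I_0$ acts transitively on $[p_0]$, it follows that $\tilde Q/Q=[p_0]$'') is the entire argument, and the Proposition is presented as a summary of it; your write-up simply makes explicit the $G_0$-equivariance step that propagates the neutral-fiber identification to all of $D$ and the openness of $\tilde D$, both of which the paper leaves tacit.
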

Let us say that that \emph{cycles separate the points} 
of a flag domain $D$ if $D=\tilde D$.  The following shows
that one actually gains something by reducing to $\tilde D$.
\begin {proposition}\label {cycles separate}
For every flag domain $D$ the cycles separate the points of
the base $\tilde D$ of the cycle separation equivalence 
relation. 
\end {proposition}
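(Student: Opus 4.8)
The plan is to show that passing from $D$ to $\tilde D$ "uses up" all the separation the cycles can provide, i.e., that iterating the construction is idempotent. Concretely, I would argue that if two points $\tilde p, \tilde q \in \tilde D$ are cycle-separation-equivalent \emph{in} $\tilde D$, then their preimages in $D$ are already cycle-separation-equivalent in $D$, so that the fiber $\pi^{-1}(\tilde p)$ coincides with $\pi^{-1}(\tilde q)$, forcing $\tilde p = \tilde q$. The key structural input is the previous Proposition: the reduction $\pi \colon D \to \tilde D$ is the restriction of a $G$-equivariant fibration $Z = G/Q \to \tilde Z = G/\tilde Q$, the fiber over $\tilde p_0$ is exactly the equivalence class $[p_0] = \tilde Q/Q$, and $\pi^{-1}(\pi(p)) = \pi_0^{-1}(\pi_0(p))$ for all $p \in D$.

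The main step is to understand how $q$-cycles in $D$ push forward to cycles in $\tilde D$ and, conversely, how cycles in $\tilde D$ pull back. First I would observe that the base cycle $C_0 = K_0.p_0 \subset D$ maps under $\pi$ to $\tilde C_0 := K_0.\tilde p_0 \subset \tilde D$, the base cycle of $\tilde D$, and that (by the preceding Proposition on agreement of fibers, together with the fact that $[p_0] \subset C_0$) the restriction $\pi|_{C_0}\colon C_0 \to \tilde C_0$ is a holomorphic fiber bundle with fiber $[p_0]$; in particular $\dim_{\mathbb C} \tilde C_0 = q - \dim[p_0] =: \tilde q$. By $G_0$-equivariance (and hence $\mathrm{Aut}_{\mathcal O}(D)$-equivariance) the general cycle $C \in \mathcal C_q(D)$ that contains a point $p$ maps onto a $\tilde q$-cycle $\pi(C) \in \mathcal C_{\tilde q}(\tilde D)$ containing $\pi(p)$, and since $C$ is saturated with respect to $\pi$ along the locus $[p_0]$ — here is where I use that the $Q$-invariance of $[p_0]$ propagates along $C_0$ — we have $C = \pi^{-1}(\pi(C))$ on the relevant strata, so that $\mathcal F_p = \pi^{-1}(\mathcal F^{\tilde D}_{\pi(p)})$, where $\mathcal F^{\tilde D}_{\tilde p}$ denotes the family of $\tilde q$-cycles in $\tilde D$ through $\tilde p$. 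The content here is that every cycle in $\mathcal C_{\tilde q}(\tilde D)$ through $\tilde p$ arises as $\pi(C)$ for some $C \in \mathcal F_p$; this should follow because the cycle reduction $Z \to \tilde Z$ carries $\mathcal C_{\tilde q}(\tilde Z)$ isomorphically onto the component of $\mathcal C_q(Z)$ through $C_0$ consisting of $\pi$-saturated cycles, and this isomorphism restricts to $\mathcal C_{\tilde q}(\tilde D) \cong \{C \in \mathcal C_q(D) : C = \pi^{-1}(\pi(C))\}$.

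Granting this identification, the conclusion is quick. Suppose $\tilde p, \tilde q \in \tilde D$ with $\tilde p \he \tilde q$ in $\tilde D$, i.e. $\mathcal F^{\tilde D}_{\tilde p} = \mathcal F^{\tilde D}_{\tilde q}$. Pick $p \in \pi^{-1}(\tilde p)$ and $p' \in \pi^{-1}(\tilde q)$. Then $\mathcal F_p = \pi^{-1}(\mathcal F^{\tilde D}_{\tilde p}) = \pi^{-1}(\mathcal F^{\tilde D}_{\tilde q}) = \mathcal F_{p'}$, so $p \he p'$ in $D$, hence $\pi_0(p) = \pi_0(p')$, that is $\tilde p = \tilde q$. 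Thus distinct points of $\tilde D$ are separated by cycles in $\mathcal C_{\tilde q}(\tilde D)$, which is the assertion. The main obstacle I anticipate is the precise claim that every $\tilde q$-cycle in $\tilde D$ through $\tilde p$ lifts to a $\pi$-saturated $q$-cycle in $D$, and dually that a general $C \in \mathcal F_p$ is genuinely $\pi$-saturated (not merely that it dominates its image): this rests on the $Q$-invariance of $[p_0]$ established in the Lemma above and on the good behavior of the Chow variety under the $G$-equivariant fibration $Z \to \tilde Z$, and it is the step where one must be careful about multiplicities and about possible non-reduced or reducible cycles. Everything else is formal manipulation of the families $\mathcal F_p$ and their compatibility with $\pi$.
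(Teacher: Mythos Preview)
Your strategy is exactly the paper's: identify the families $\mathcal F_p$ in $D$ with the families $\mathcal F^{\tilde D}_{\pi(p)}$ in $\tilde D$ via pullback along $\pi$, and then argue that $\mathcal F^{\tilde D}_{\tilde p}=\mathcal F^{\tilde D}_{\tilde q}$ forces $\mathcal F_p=\mathcal F_{p'}$, hence $\tilde p=\tilde q$. So the overall architecture is correct and matches the paper.

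The point you flag as the ``main obstacle'' is indeed the entire content of the argument, and the paper isolates it as a separate Lemma immediately preceding the proof: it shows that $\pi^*:\mathcal C_{\tilde q}(\tilde Z)\to \mathcal C_q(Z)$ is \emph{biholomorphic} (and hence so is its restriction $\pi_0^*:\mathcal C_{\tilde q}(\tilde D)\to \mathcal C_q(D)$). The key is that \emph{every} cycle in the irreducible component $\mathcal C_q(Z)$ is $\pi$-saturated, not merely the ones in a $G$-orbit or the ones through special points; your formulation in terms of ``$Q$-invariance propagating along $C_0$'' and ``good behavior of the Chow variety'' does not by itself yield this. The paper's mechanism is a short open--closed argument in $\mathcal C_q(Z)$: saturated cycles clearly form a closed set, and they also form an open set because if $C_n\to C$ with $C$ saturated and each $C_n$ not, then $\pi(C_n)\to\pi(C)$ in the Hausdorff sense with $\dim\pi(C_n)>\dim\pi(C)$, which is impossible. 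Since the base cycle $C_0$ is saturated and $\mathcal C_q(Z)$ is by definition the irreducible component containing $C_0$, every cycle in $\mathcal C_q(Z)$ is saturated, and $\pi_*$ furnishes the inverse to $\pi^*$. Once you have this, your concluding paragraph goes through verbatim and is identical to the paper's proof.
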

The next lemma is the essential ingredient for the proof
of this Proposition. For this we let
$\tilde q:=\mathrm{dim}(\tilde C_0)$ and denote by $\pi^*$ the
the map from the full space of $\tilde q$-cycles in $\tilde Z$ 
to the full space of $q$-cycles in $Z$ which is induced
from the fibration $\pi :Z\to \tilde Z$.
\begin {lemma}
The restriction of $\pi^*$ to the irreducible component
$\mathcal {C}_{\tilde q}(\tilde Z)$ is a biholomorphic
map $\pi^*:\mathcal {C}_{\tilde q}(\tilde Z)\to \mathcal C_q(Z)$.
In particular, 
$\pi_0^*:\mathcal {C}_{\tilde q}(\tilde D)\to \mathcal C_q(D)$
is also biholomorphic.
\end {lemma}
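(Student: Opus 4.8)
\textit{Proof proposal.} The plan is to realise $\pi^{*}$ as a holomorphic injection of cycle spaces, to compute its differential at the base cycle, and then to bootstrap to a global biholomorphism. First I would record the elementary geometry of $\pi$: it is a \emph{smooth} holomorphic fibration whose typical fibre is $\tilde Q/Q=[p_{0}]$, a connected projective rational homogeneous variety, so that $q=\tilde q+\dim[p_{0}]$ and $H^{0}([p_{0}],\mathcal O)=\mathbb C$. For a $\tilde q$-cycle $\tilde C=\sum\tilde n_{j}\tilde X_{j}$ in $\tilde Z$ the flat preimage $\pi^{*}\tilde C:=\sum\tilde n_{j}\,\pi^{-1}(\tilde X_{j})$ is a $q$-cycle, each $\pi^{-1}(\tilde X_{j})$ being irreducible since $\pi$ is a fibre bundle over the irreducible $\tilde X_{j}$; applying $\mathrm{id}\times\pi\colon\mathcal C_{\tilde q}(\tilde Z)\times Z\to\mathcal C_{\tilde q}(\tilde Z)\times\tilde Z$ to the universal family shows, as in \cite{B} and \cite{FHW}, \S 7.4, that $\tilde C\mapsto\pi^{*}\tilde C$ is holomorphic into the full $q$-cycle space. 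It is injective because $\pi(\pi^{-1}(A))=A$, and since $[p_{0}]\subset C_{0}$ together with $K_{0}$-equivariance forces $C_{0}=\pi^{-1}(\pi(C_{0}))$ while $\pi(C_{0})=\tilde C_{0}$, one gets $\pi^{*}\tilde C_{0}=C_{0}$; the irreducibility of $\mathcal C_{\tilde q}(\tilde Z)$ then gives a holomorphic injection $\pi^{*}\colon\mathcal C_{\tilde q}(\tilde Z)\to\mathcal C_{q}(Z)$.

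Next I would compute $d\pi^{*}$ at $\tilde C_{0}$. Since $C_{0}=\pi^{-1}(\tilde C_{0})$ is $\pi$-saturated, comparing the relative tangent sequences of $\pi$ on $Z$ and on $C_{0}$ gives a canonical isomorphism $N_{C_{0}/Z}\cong\pi^{*}N_{\tilde C_{0}/\tilde Z}$ (the vertical directions cancel); combined with $\pi_{*}\mathcal O_{C_{0}}=\mathcal O_{\tilde C_{0}}$ (connected rational fibres $\tilde Q/Q$) and the projection formula this produces a canonical isomorphism $H^{0}(\tilde C_{0},N_{\tilde C_{0}/\tilde Z})\xrightarrow{\ \sim\ }H^{0}(C_{0},N_{C_{0}/Z})$, which is precisely $d\pi^{*}$. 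By Theorem 18.6.1 of \cite{FHW}, both $\mathcal C_{\tilde q}(\tilde Z)$ and $\mathcal C_{q}(Z)$ are smooth at their base cycles, so these $H^{0}$'s are the respective tangent spaces; hence $\pi^{*}$ is a local biholomorphism near $\tilde C_{0}$ and $\dim\mathcal C_{\tilde q}(\tilde Z)=\dim\mathcal C_{q}(Z)$.

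It then remains to globalise. The image $\pi^{*}(\mathcal C_{\tilde q}(\tilde Z))$ is a closed (the source is projective) irreducible subvariety of the irreducible space $\mathcal C_{q}(Z)$ of the same dimension, hence all of $\mathcal C_{q}(Z)$: thus $\pi^{*}$ is surjective and every $C\in\mathcal C_{q}(Z)$ is $\pi$-saturated. Its set-theoretic inverse is $C\mapsto\pi(C)$ taken with multiplicities, and I would argue this inverse is holomorphic by the same kind of cycle-space reasoning: applying $\mathrm{id}\times\pi$ to the universal family over $\mathcal C_{q}(Z)$ and taking the proper direct image yields an analytic family of $\tilde q$-cycles in $\tilde Z$, whose classifying holomorphic map $\mathcal C_{q}(Z)\to\mathcal C_{\tilde q}(\tilde Z)$ is, because each $\pi^{-1}(\tilde X_{i})\to\tilde X_{i}$ has degree one, exactly $(\pi^{*})^{-1}$. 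Finally, since $D=\pi^{-1}(\tilde D)$ (because $\pi^{-1}(\pi(p))=\pi_{0}^{-1}(\pi_{0}(p))$ for $p\in D$), a cycle $C=\pi^{*}\tilde C$ lies in $D$ if and only if $\tilde C\subset\tilde D$, so $\pi^{*}$ restricts to a biholomorphism $\mathcal C_{\tilde q}(\tilde D)\to\mathcal C_{q}(D)$.

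The main obstacle is the infinitesimal comparison of the second paragraph: one must be careful that the only smoothness input is that of the two cycle spaces \emph{at the base cycles} --- all that \cite{FHW}, Theorem 18.6.1 provides, global smoothness of $\mathcal C_{q}(D)$ being still open --- so that the argument cannot proceed by a local-isomorphism claim away from $C_{0}$ but must rely on the genuinely geometric facts $N_{C_{0}/Z}\cong\pi^{*}N_{\tilde C_{0}/\tilde Z}$ and $\pi_{*}\mathcal O_{C_{0}}=\mathcal O_{\tilde C_{0}}$ at $C_{0}$ alone, after which surjectivity (via the dimension count) and holomorphy of the inverse are formal. The remaining verifications --- that flat pullback and proper direct image of cycles are holomorphic as maps of Barlet cycle spaces, and the bookkeeping of multiplicities --- are routine and I would merely cite \cite{B} and \cite{FHW}, \S 7.4.
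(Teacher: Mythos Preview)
Your proof is correct, but it follows a genuinely different route from the paper's. The paper never computes a differential or a normal bundle. Instead it defines a cycle $C\in\mathcal C_q(Z)$ to be \emph{$\pi$-saturated} when $\dim\pi(C)=\tilde q$, and argues directly that the set of saturated cycles is both open and closed in $\mathcal C_q(Z)$: closedness is immediate, and openness comes from a Hausdorff-convergence argument (a sequence of non-saturated cycles, whose images have dimension strictly larger than $\tilde q$, cannot converge to a cycle whose image has dimension $\tilde q$). Since $C_0$ is saturated and $\mathcal C_q(Z)$ is connected, every cycle is saturated; the pushforward $\pi_*$ then furnishes a holomorphic inverse to $\pi^*$.

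What each approach buys: the paper's argument is more elementary---it needs no cohomology, no identification $N_{C_0/Z}\cong\pi^*N_{\tilde C_0/\tilde Z}$, and no appeal to smoothness of either cycle space at its base point---and it establishes the key geometric fact (every cycle in the component is $\pi$-saturated) as the main step rather than as a corollary of surjectivity. Your approach, by contrast, is more structural: the projection-formula isomorphism $H^0(\tilde C_0,N_{\tilde C_0/\tilde Z})\xrightarrow{\sim}H^0(C_0,N_{C_0/Z})$ gives an explicit identification of the Zariski tangent spaces and the dimension equality in one stroke, after which surjectivity follows formally from compactness of the source and irreducibility of the target. Your route extracts more information (the differential of $\pi^*$ at the base cycle) at the cost of importing slightly heavier machinery from \cite{FHW}, Theorem~18.6.1, and the Barlet formalism; the paper's route stays closer to first principles about Hausdorff limits of analytic sets.
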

\begin {proof}
The map $\pi^*$ is holomorphic and injective and therefore its
image lies in some irreducible component of the full cycle
space of $Z$. Since $\pi^*(\tilde C_0)=C_0\in \mathcal C_q(Z)$,
its image is in $\mathcal C_q(Z)$.   Now consider the
map $\pi_*$ from supports of cycles in $\mathcal {C}_q(Z)$
to compact analytic subsets in $\tilde Z$. Let us say that
a cycle $C\in \mathcal {C}_q(Z)$ is $\pi $-saturated if its
image is $\tilde q$-dimensional. Clearly the set of
saturated cycles is closed.  To see that it is also
open, suppose $C$ is saturated, but that there is a sequence
$C_n$ of nonsaturated cycles converging to $C$. It follows
that $\pi (C_n)$ converges to $\pi (C)$ in the Hausdorff
topology. But this is impossible, because 
$\mathrm {dim}(\pi(C_n))>\mathrm {dim}(\pi(C))$.  Since $C_0$ is
saturated, it follows that every $C\in \mathcal C_q(Z)$
is saturated and therefore $\pi_*$ maps $\mathcal C_q(Z)$
to an irreducible subvariety of the full space of $\tilde q$-cycles
in $\tilde Z$. Since $\pi_*(C_0)=\tilde C_0$, this subvariety 
is contained in $\mathcal C_{\tilde q}(\tilde Z)$. Therefore
$\pi^*$ is invertible with inverse $\pi_*$.  The biholomorphicity
of $\pi_0^*$ follows immediately.
\end {proof} 
{\it Proof of Proposition \ref{cycles separate}.} Observe
that if $\tilde p\in \tilde D$, then by the Lemma the family 
$\mathcal F_{\tilde p}$ of cycles
containing $\tilde p$ is mapped biholomorphically by 
$\pi_0^*$ to $\mathcal F_p$ for any $p\in \pi^{-1}(\tilde p)$.
If $\tilde p$ and $\tilde q$ are different points in $\tilde D$
and $p$ and $q$ are corresponding points in $D$, then
$\mathcal F_p\not=\mathcal F_q$.  Thus 
$\mathcal F_{\tilde p}\not=\mathcal F_{\tilde q}$ and consequently
$\tilde p$ and $\tilde q$ are separated by some cycle in
$\tilde D$.\qed

Since $\mathcal C_q(D)$ is Kobayashi-hyperbolic with
$\mathrm {Aut}_{\mathcal O}(\mathcal C_q(D))$ being
a Lie group acting smoothly by holomorphic transformations,
the following formal observation is important for our study of 
$\mathrm {Aut}(D)$.
\begin {proposition}\label{injectivity}
If cycles separate points in $D$, then the canonically defined 
homomorphism 
$\mathrm {Aut}_{\mathcal O}(D)\to \mathrm {Aut}_{\mathcal O}(\mathcal C_q(D))$
is injective.
\end {proposition}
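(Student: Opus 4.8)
{\it Proof of Proposition \ref{injectivity}.}
The plan is to argue injectivity directly: I would take an element $g$ in the kernel of the homomorphism $\mathrm {Aut}_{\mathcal O}(D)\to \mathrm {Aut}_{\mathcal O}(\mathcal C_q(D))$, so that $g$ fixes every cycle $C\in \mathcal C_q(D)$ as a subset of $D$, and deduce that $g=\mathrm {id}_D$. The bridge between the action of $g$ on $D$ and its action on cycle space is the evident compatibility with the incidence relation between points and cycles: if $p\in C$, then $g(p)\in g(C)$. Before starting I would record the elementary fact that every point of $D$ lies on at least one cycle of $\mathcal C_q(D)$ -- writing $D=G_0/H_0$ with $K_0.p_0=C_0$ the base cycle, an arbitrary $p\in D$ has the form $p=g_0(p_0)$ with $g_0\in G_0$, hence $p\in g_0(C_0)\in \mathcal M_D\subset \mathcal C_q(D)$ -- so that the families $\mathcal F_p$ are nonempty and the cycle separation relation is genuinely governed by the cycles passing through $p$.

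The core step is then to fix $p\in D$ and compare $\mathcal F_p$ with $\mathcal F_{g(p)}$. If $C\in \mathcal F_p$, then $p\in C$, hence $g(p)\in g(C)=C$, so $C\in \mathcal F_{g(p)}$; this gives $\mathcal F_p\subset \mathcal F_{g(p)}$. Since $g^{-1}$ also lies in the kernel, the same reasoning applied to $g^{-1}$ and the point $g(p)$ yields $\mathcal F_{g(p)}\subset \mathcal F_{g^{-1}(g(p))}=\mathcal F_p$, so in fact $\mathcal F_p=\mathcal F_{g(p)}$. By the definition of cycle separation this means $p\he g(p)$. But by hypothesis cycles separate the points of $D$, i.e., $D=\tilde D$, which is to say that the relation $\he$ is trivial on $D$; therefore $g(p)=p$. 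As $p\in D$ was arbitrary, $g=\mathrm {id}_D$, and the homomorphism is injective.\qed

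As the text already indicates by calling this a ``formal observation,'' I do not expect any serious obstacle here. The one point that deserves a word is the well-definedness of the homomorphism itself, namely that $g\in \mathrm {Aut}_{\mathcal O}(D)$ really does carry $\mathcal C_q(D)$ biholomorphically onto itself; this is precisely what is meant by the ``canonically defined'' homomorphism, and is standard -- a biholomorphism of $D$ transports compact $q$-cycles in $D$ to compact $q$-cycles in $D$ and induces an automorphism of the relevant component of the cycle space. Granting this, the argument above uses nothing beyond the definition of cycle separation and the standing hypothesis that cycles separate points, which is exactly why Proposition \ref{cycles separate} (reduction to $\tilde D$) is placed just before it.
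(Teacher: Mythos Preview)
Your argument is correct and follows essentially the same approach as the paper's own proof. The only cosmetic difference is that the paper phrases the key step via the intersection sets $\mathcal I_q$ and the description $[p]=\bigcap_{q\in \mathcal I_p}\mathcal I_q$, observing that $g$ stabilizes every $\mathcal I_q$ and hence every equivalence class $[p]$, whereas you go directly to $\mathcal F_p=\mathcal F_{g(p)}$; these are equivalent formulations of the same elementary observation.
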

\begin {proof}
If $g\in \mathrm {Aut}(D)$ acts as the identity on the cycle
space, then $g(\mathcal F_q)=\mathcal F_q$ and
consequently $g({\mathcal I}_q)={\mathcal I}_q$ for all $q\in D$.
Recall that $[p]$ is the intersection of such sets 
and $[p]=\{p\}$ in the case where cycles separate points.
Thus if cycles separate points and $g$ acts trivially on the cycle space,
it follows that $g$ acts trivially on $D$.
\end {proof}
As a consequence it follows that for an arbitrary flag domain
$D$ the ineffectivity of the $\mathrm {Aut}_{\mathcal O}(D)$-action
on $\mathcal C_q(D)$ is the same as the ineffectivity of
its action on the base of the
reduction $D\to \tilde D$ defined by cycle separation.
In the next section we show that, unless $D$ is
holomorphically convex with nontrivial Remmert reduction, 
the latter ineffectivity is finite.
\section {Ineffectivity of the action on the cycle space}
In this section it will be assumed that the flag domain
at hand is cycle connected, or equivalently that 
$D$ it is not holomorphically convex. Recall that this
is the same as assuming that $\mathcal O(D)\cong \mathbb C$.
As above $D\to \tilde D$ denotes the reduction by cycle
separation.  

Our goal here is to prove that the ineffectivity
$I$ of the $\mathrm {Aut}_{\mathcal O}(D)$-action on
the cycle space $\mathcal C_{\tilde q}(\tilde D)$ is a finite
(perhaps trivial) group.
For this we must slightly refine our considerations for
the reduction by cycle separation.  This is done as follows.

The reduction by cycle separation $D\to \tilde D$ is constructed
using all cycles in $\mathcal C_q(D)$, because for the purposes
of analyzing $\mathrm {Aut}_{\mathcal O}(D)$ we require it to be
$\mathrm {Aut}_{\mathcal O}(D)$-equivariant.  On
the other hand, by doing this we loose control of the geometric
nature of the cycles, because they are not necessarily 
orbits.  Having paid this price, we now reduce $\tilde D$
by cycle seperation using only those cycles from 
$\mathcal M_{\tilde D}$.  Thus we obtain a sequence
$$
D\to \tilde D\to \tilde D_{res}
$$ 
of $G_0$-homogeneous fibrations. The notation $\tilde D_{res}$ 
refers to the
fact that we have \emph{restricted} to the smaller set of cycles
$\mathcal M_{\tilde D}$.  Note that since 
$\mathcal M_{\tilde D}$ is naturally isomorphic to 
$\mathcal M_D$, we my view $D\to \tilde D_{res}$ as
reduction by cycle separation using $\mathcal M_D$.

The methods which are used for proving the basic properties
of $D\to \tilde D$ may be used for $D\to \tilde D_{res}$. For
example, the sequence of homogeneous fibrations above are
saturated restrictions of fibrations
$$
Z\to \tilde Z\to \tilde Z_{res}\,.
$$
In this case, however, we only know that 
$\pi_{res}:D\to \tilde D_{res}$ defines a $G_0$-equivariant
biholomorphic map from $\mathcal M_{D_{res}}$ to $\mathcal M_D$.

Now we come to the main point: Although $D\to \tilde D_{res}$
is not necessarily $\mathrm {Aut}_{\mathcal O}(D)$-equivariant,
it is $I$-invariant!  This is just due to the fact that
$D\to \tilde D_{res}$ can be defined by continuing
the $I$-invariant map $D\to \tilde D$ to 
$D\to \tilde D\to \tilde D_{res}$.  Using this we will show
that $I$ is a (finite-dimensional) Lie group acting smoothly
on $D$ as a group of holomorphic transformations.
\subsection* {The Lie group structure of $I$}
Our work here makes strong use of the proposition below.
It will be applied to the associated fibration 
$\pi :C\to C_{res}$ of
a cycle $C$ in $\mathcal M_D$ to $C_{res}\in \mathcal M_{D_{res}}$
which is defined by the restriction of the reduction
$D\to \tilde D_{res}$.  Note that if $C=g(C_0)$, then
$C$ and $C_{res}$ are homogeneous with respect to 
$\widehat {G}:=gKg^{-1}$ and $\pi $ is $\widehat G$-equivariant.
\begin {proposition} Let 
$\pi: \widehat G/Q_1=Z_1\to Z_2=\widehat G/Q_2$ be a homogeneous
fibration of flag manifolds.  Let $I_1$ be a Lie group
acting effectively on $Z_1$ as a closed subgroup of 
$\mathrm {Aut}_{\mathcal O}(Z_1)$ which stabilizes every $\pi $-fiber
$\pi^{-1}(z_2)=:F$.  Then $I_1$ acts effectively on 
every $\pi $-fiber.
\end {proposition}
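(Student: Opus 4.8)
The plan is to prove the apparently stronger statement that \emph{every} automorphism $g\in\mathrm{Aut}_{\mathcal O}(Z_1)$ which is \emph{vertical} (meaning $\pi\circ g=\pi$) and which restricts to the identity on one fibre is the identity. Since $I_1$ stabilises every $\pi$‑fibre, each of its elements is vertical, and applying this assertion to the elements of $I_1$ that act trivially on a given fibre $F$ gives the proposition. For the reductions: if $Z_2$ is a point there is nothing to prove, so we may assume $Q_1\subsetneq Q_2\subsetneq\widehat G$, i.e. $Q_1$ is not maximal. Replacing $\widehat G$ by its image in $\mathrm{Aut}_{\mathcal O}(Z_1)$ we may assume $\widehat G$ acts effectively on $Z_1$; and since $\pi$ is nontrivial, $Z_1$ is neither a projective space nor a quadric, so by Onishchik's theorem $\widehat G=\mathrm{Aut}_{\mathcal O}(Z_1)^\circ$ — in particular $\mathrm{Aut}_{\mathcal O}(Z_1)$ is a linear algebraic group with Lie algebra $\widehat{\mathfrak g}$. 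By homogeneity of $Z_2$ we may finally fix the fibre $F:=\pi^{-1}(z_0)$ with $z_0=eQ_2$, so that $F=Q_2/Q_1\hookrightarrow\widehat G/Q_1=Z_1$.

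The main step is to show that the group $N:=\{h\in\mathrm{Aut}_{\mathcal O}(Z_1):\pi\circ h=\pi\text{ and }h|_F=\mathrm{id}\}$ is finite. Both defining conditions are Zariski‑closed, so $N$ is an algebraic subgroup and it suffices to see that $\mathrm{Lie}(N)=0$. One has $\mathrm{Lie}(N)=\widehat{\mathfrak g}_{\mathrm{vert}}\cap\bigcap_{p\in F}\mathfrak p_p$, where $\mathfrak p_p\subseteq\widehat{\mathfrak g}$ is the isotropy subalgebra at $p$ and $\widehat{\mathfrak g}_{\mathrm{vert}}$ — the Lie algebra of $\ker(\widehat G\to\mathrm{Aut}_{\mathcal O}(Z_2))$ — is the largest ideal of $\widehat{\mathfrak g}$ contained in $\mathfrak q_2$, hence a direct sum $\bigoplus_i\mathfrak s_i$ of simple ideals with $\mathfrak s_i\subseteq\mathfrak q_2$. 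Since $\bigcap_{p\in F}\mathfrak p_p=\bigcap_{q\in Q_2}\mathrm{Ad}(q)\,\mathfrak q_1$ and each $\mathfrak p_p$ respects the decomposition of $\widehat{\mathfrak g}$ into simple factors, it is enough to check, for each $i$, that $\mathfrak s_i\cap\bigcap_{p\in F}\mathfrak p_p=\bigcap_{q\in Q_2}\mathrm{Ad}(q)\,(\mathfrak s_i\cap\mathfrak q_1)=0$. Now the simple normal subgroup $S_i\subseteq Q_2$ acts on $\mathfrak s_i$ through all of $\mathrm{Inn}(\mathfrak s_i)$, so this intersection equals the largest ideal of $\mathfrak s_i$ contained in the parabolic subalgebra $\mathfrak s_i\cap\mathfrak q_1$; and $\mathfrak s_i\cap\mathfrak q_1$ is a \emph{proper} parabolic of the simple algebra $\mathfrak s_i$, because $\mathfrak s_i\subseteq\mathfrak q_1$ would make the normal subgroup $S_i$ act trivially on $Z_1=\widehat G/Q_1$, contradicting effectiveness. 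A proper parabolic of a simple Lie algebra contains no nonzero ideal, so this intersection is $0$; hence $\mathrm{Lie}(N)=0$ and $N$ is finite.

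To conclude, let $g\in\mathrm{Aut}_{\mathcal O}(Z_1)$ be vertical with $g|_F=\mathrm{id}$; then $g\in N$, so $g$ has finite order. At a point $p\in F$, $g|_F=\mathrm{id}$ forces $dg_p$ to fix $T_pF=\ker d\pi_p$ pointwise, while $\pi\circ g=\pi$ forces the map induced by $dg_p$ on $T_pZ_1/\ker d\pi_p\cong T_{z_0}Z_2$ to be the identity; hence $dg_p$ is unipotent. But $g$ has finite order, so $dg_p$ is semisimple, and a semisimple unipotent map is the identity. Since the fixed‑point set of a finite‑order automorphism is a submanifold with tangent space $\ker(dg_p-\mathrm{id})=T_pZ_1$ at $p$, and $Z_1$ is connected, we get $g=\mathrm{id}$. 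Finally, if $g\in I_1$ acts trivially on an arbitrary fibre $F_{z_1}$, conjugating by some $\widehat\gamma\in\widehat G$ with $\widehat\gamma(z_1)=z_0$ turns $g$ into a vertical automorphism trivial on $F$, which by the above is the identity; hence $g=\mathrm{id}$. Thus $I_1$ acts effectively on every $\pi$‑fibre.

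The step I expect to be the heart of the matter is the vanishing $\mathrm{Lie}(N)=0$: this is precisely where the geometry of a \emph{homogeneous} fibration of flag manifolds — the nesting $Q_1\subseteq Q_2$ together with effectiveness of the $\widehat G$‑action on $Z_1$ — enters, the remaining arguments (algebraicity of $\mathrm{Aut}_{\mathcal O}(Z_1)$, the Jordan‑type argument with $dg_p$, and the reductions via Onishchik and homogeneity) being formal.
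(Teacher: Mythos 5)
Your overall plan --- show that the group $N$ of \emph{vertical} automorphisms of $Z_1$ that restrict to the identity on one fibre is finite by a Lie-algebra computation, then upgrade ``finite order plus unipotent differential'' to the identity, then move fibres around by $\widehat G$ --- is a genuinely different route from the paper's (the paper replaces $I_1$ by the full fibre-stabilizer $I$ in $\mathrm{Aut}_{\mathcal O}(Z_1)$, uses that $I^\circ$ is a product of simple factors of $\mathrm{Aut}_{\mathcal O}(Z_1)^\circ$ to split $Z_1=A\times B$ with $A$ contained in every fibre, and kills the residual finite ineffectivity by linearizing at a fixed point and producing an invariant transversal slice). But there is a genuine gap at the step where you claim $\widehat G=\mathrm{Aut}_{\mathcal O}(Z_1)^\circ$. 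First, Onishchik's exceptional list is not exhausted by projective spaces and quadrics: it also contains the spinor varieties (isotropic $n$-planes in $\mathbb C^{2n}$, on which $\mathrm{SO}_{2n-1}\subset \mathrm{SO}_{2n}$ acts transitively), so ``$Z_1$ is neither $\mathbb P^N$ nor a quadric'' does not yield surjectivity of $\widehat G\to\mathrm{Aut}_{\mathcal O}(Z_1)^\circ$; the usable form of Onishchik's result (the one the paper itself invokes elsewhere) is that in every exceptional case the isotropy parabolic of the \emph{smaller} group is maximal. Second, and more seriously, nothing in the Proposition makes $\widehat G$ simple or $Z_1$ irreducible --- in the intended application $\widehat G=gKg^{-1}$ is a conjugate of the complexified maximal compact subgroup, typically reductive and non-simple, so $Z_1$ is a product of flag manifolds. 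For a product, nontriviality of $\pi$ excludes nothing about the individual factors: e.g.\ $Z_1=\mathbb P^{2n-1}\times\mathbb P^1$ with $\widehat G$ acting through $\mathrm{PSp}_n\times\mathrm{PGL}_2$ and $\pi$ the projection onto one factor is a nontrivial homogeneous fibration, yet $\mathrm{Aut}_{\mathcal O}(Z_1)^\circ=\mathrm{PGL}_{2n}\times\mathrm{PGL}_2$ strictly contains the image of $\widehat G$.

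This matters because $N$ must live in the \emph{full} automorphism group (the elements of $I_1$ you ultimately apply the argument to are arbitrary automorphisms, not elements of $\widehat G$), so your identity $\mathrm{Lie}(N)=\widehat{\mathfrak g}_{\mathrm{vert}}\cap\bigcap_{p\in F}\mathfrak p_p$ only controls $\mathrm{Lie}(N)\cap\widehat{\mathfrak g}$ unless $\mathrm{Lie}\,\mathrm{Aut}_{\mathcal O}(Z_1)=\widehat{\mathfrak g}$; a priori $\mathrm{Lie}(N)$ could contain vertical fields from the larger algebra vanishing on $F$. The vanishing $\mathrm{Lie}(N)=0$ is still true, but it needs an additional argument, for instance: decompose $\widehat{\mathfrak g}$ into simple ideals, observe that on each Onishchik-exceptional factor the parabolic $\mathfrak q_1$ is maximal, so on such a factor $\pi$ is either an isomorphism or the collapse to a point; conclude that $\pi$ is equivariant for all of $\mathrm{Aut}_{\mathcal O}(Z_1)^\circ$, and then rerun your computation with the Lie algebra of the full automorphism group and its parabolics in place of $\widehat{\mathfrak g}$, $\mathfrak q_1$, $\mathfrak q_2$. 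With that repair the remainder of your proof --- the ``no nonzero ideal of a simple algebra lies in a proper parabolic'' vanishing (which is where effectivity enters, exactly as you say), the finiteness of $N$, the finite-order-plus-unipotent step, and the conjugation to an arbitrary fibre --- is correct.
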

\begin {proof}
We may replace $I_1$ by the (algebraic)
subgroup $I$ of the algebraic group $\mathrm {Aut}_{\mathcal O}(X)$ 
which stabilizes the $\pi $-fibers. 
Now $\mathrm {Aut}_{\mathcal O}(X)^\circ $ is semisimple
and $I^\circ$ is a product of its simple factors. Thus $Z_1=A\times B$,
where $A$ is the corresponding product of irreducible factors of $Z_1$.
Consequently  $A$ is contained in every $\pi $-fiber and as a result
$I^{\circ}$ acts effectively on every $\pi $-fiber as well. 
Finally, since $I$ is an algebraic group $I^\circ$ is semisimple, 
any normal subgroup $\Gamma $ of $I$ is finite. Therefore,
if $\Gamma $ is the ineffectivity of the $I$-action on a $\pi $-fiber 
$F$, at any point $z\in F$ we may faithfully linearize the
$\Gamma $-action on the tangent space $T_zZ_1$ to produce
a local submanifold $\Delta $ complemenary to $F$ at $z$
which is $\Gamma $-invariant and where $\Gamma $ acts faithfully. 
This is contrary to $\Gamma $
stabilizing all $\pi $-fibers.
\end {proof}
The following is the main result of this section.
\begin {theorem} \label{ineffectivity}
If $D$ is a flag domain which is not holomorphically
convex, then the ineffectivity $I$ of the $\mathrm {Aut}(D)$-action
on the base of the reduction $D\to \tilde D$ defined by
cycle separation is a Lie group acting smoothly as a group
of holomorphic transformations on $D$.
\end {theorem}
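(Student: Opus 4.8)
The plan is to exploit the key observation highlighted just before the theorem: although the further reduction $D\to\tilde D_{res}$ (obtained by continuing $D\to\tilde D$ and separating again using only the group cycles $\mathcal M_D$) need not be $\mathrm{Aut}_{\mathcal O}(D)$-equivariant, it \emph{is} $I$-invariant, since $I$ is already trivial on $\tilde D$ and the second step is canonically attached to $\tilde D$. So $I$ acts on the fibers of $Z\to \tilde Z_{res}$ restricted to $D$, and each such fiber is a compact flag-type subvariety. The strategy is therefore: first localize the action of $I$ to a single cycle $C=g(C_0)\in\mathcal M_D$ and its internal fibration $C\to C_{res}$; second, use the preceding Proposition to show $I$ acts effectively and smoothly there; and third, recover a global Lie group structure and smooth action on all of $D$ from this local picture.

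\textbf{First} I would make precise that $I$ preserves the geometry of cycles. Fix a cycle $C=g(C_0)\in\mathcal M_D$; it is a $\widehat G=gKg^{-1}$-homogeneous flag submanifold of $Z$, and the restriction of $D\to\tilde D_{res}$ gives a $\widehat G$-equivariant homogeneous fibration $\pi:C\to C_{res}$. Because $I$ fixes $\tilde D_{res}$ pointwise (this is where $I$-invariance of the \emph{whole} composite map is used), $I$ maps each $\pi$-fiber $F=\pi^{-1}(c)\subset C$ into a fiber of $Z\to\tilde Z_{res}$; since these fibers are compact and $I$ consists of holomorphic automorphisms of $D$, the fiber $F$ is $I$-stable. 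Now the fibers of $C\to C_{res}$ are themselves flag manifolds, so I am in precisely the situation of the Proposition with $Z_1=C$, $Z_2=C_{res}$: I would first replace $I$ by (the closure of) its image in $\mathrm{Aut}_{\mathcal O}(C)$ — a priori $I$ need not be a Lie group, but its image in the automorphism group of the \emph{compact} manifold $C$ lies in the algebraic group $\mathrm{Aut}_{\mathcal O}(C)$, hence \emph{is} an algebraic, in particular Lie, group, and by the Proposition it acts effectively on every $\pi$-fiber.

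\textbf{Next} comes the transition from ``Lie group on one cycle'' to ``Lie group on $D$''. The point is that the homomorphism $I\to \mathrm{Aut}_{\mathcal O}(C)$ (restriction of an automorphism of $D$ to the invariant submanifold $C$, then to an invariant $\pi$-fiber) should be shown to be \emph{injective}: if $h\in I$ acts trivially on some $\pi$-fiber $F_0\subset C_0$, I want to conclude $h=\mathrm{id}_D$. For this I would run the argument over all cycles simultaneously — every point of $D$ lies on some $C\in\mathcal M_D$ by cycle connectivity, and the collection of all $\pi$-fibers coming from all cycles in $\mathcal M_D$ sweeps out $D$; an element of $I$ trivial on the cycle space $\mathcal C_{\tilde q}(\tilde D)$ and hence (by Proposition~\ref{injectivity} applied on $\tilde D$, where cycles separate points) trivial on $\tilde D$, which is all elements of $I$, acts fiberwise, and effectiveness on each fiber forces the element to be the identity. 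Concretely, realize $I$ as a closed subgroup of $\mathrm{Aut}_{\mathcal O}(C_0)\times \mathrm{Aut}_{\mathcal O}(\tilde D)$ (first factor from the restriction to one base cycle $C_0=K_0.p_0$, into which all the relevant fibers fit, second factor recording — trivially — the action on $\tilde D$); since the first factor is a complex Lie group and the embedding is a closed injective homomorphism, $I$ inherits a Lie group structure. Smoothness of the $I$-action on $D$ then follows because the $I$-action on each cycle $C$ is the restriction of the algebraic $\mathrm{Aut}_{\mathcal O}(C)$-action, and $D$ is covered by such cycles depending holomorphically on parameters in the (smooth near $C_0$) space $\mathcal M_D$; a standard bootstrapping argument — continuity of a group action by homeomorphisms into a Lie-group-valued map, plus holomorphy in the cycle parameters — upgrades this to a smooth action by holomorphic transformations, finishing the proof.

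\textbf{The hard part} will be the passage from the per-cycle statement to the global one: controlling that the Lie group structure obtained from the restriction to a single base cycle $C_0$ does not lose information about $D$, i.e.\ the injectivity of $I\hookrightarrow \mathrm{Aut}_{\mathcal O}(C_0)$ (equivalently, that no nontrivial element of $I$ is supported away from $C_0$), and matching the abstract-group topology on $I$ with the manifold topology so that the action on $D$ is genuinely smooth rather than merely separately continuous. The delicate point is that $\mathrm{Aut}_{\mathcal O}(D)$ is a priori only a topological group, so one must verify that the induced map to the Lie group $\mathrm{Aut}_{\mathcal O}(C_0)$ is a homeomorphism onto a closed subgroup; this is where one invokes that the cycles in $\mathcal M_D$ move holomorphically over a finite-dimensional base and that evaluation along them is jointly continuous, so that the compact-open topology on $\mathrm{Aut}_{\mathcal O}(D)$ is pinned down by its restrictions to the compact cycles $C$.
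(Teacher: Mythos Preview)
Your overall strategy coincides with the paper's: restrict $I$ to a cycle $C=g(C_0)\in\mathcal M_D$, use the preceding Proposition on the fibration $C\to C_{res}$ to get effectiveness on each $\pi_C$-fiber, then show the restriction $R_C:I\to\mathrm{Aut}_{\mathcal O}(C)$ (or further to a fiber $F$) is an injective homeomorphism onto a closed subgroup of a Lie group, and conclude via Bochner--Montgomery. However, the two technical steps you flag as ``the hard part'' are not merely delicate---your sketches for them contain actual gaps.

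\textbf{Injectivity.} The sentence ``effectiveness on each fiber forces the element to be the identity'' is a non-sequitur. The Proposition gives you that $I_C\to I_C(F)$ is injective for each cycle $C$ and each $\pi_C$-fiber $F\subset C$; it does \emph{not} say that an element trivial on one fiber of one cycle is trivial everywhere. The missing mechanism is a \emph{chain argument} across $\tilde D_{res}$: one must observe that the $\pi_C$-fibers coincide with the full $\pi$-fibers of $D\to\tilde D_{res}$ (this follows because the equivalence class $[p_0]$ at each stage of the reduction is contained in $C_0$), so that if two cycles $C^j,C^{j+1}\in\mathcal M_D$ have images $\tilde C^j_{res},\tilde C^{j+1}_{res}$ meeting at a point $\tilde p_j$, they share the fiber $F_j=\pi^{-1}(\tilde p_j)$. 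Then: $g$ trivial on $C^j\Rightarrow g$ trivial on $F_j\subset C^j\Rightarrow$ (effectiveness for $C^{j+1}$) $g$ trivial on $C^{j+1}$. Cycle connectivity of $\tilde D_{res}$ (which follows from that of $D$) then propagates triviality to every cycle, hence to all of $D$. Your phrase ``every point of $D$ lies on some $C\in\mathcal M_D$ by cycle connectivity'' is only $G_0$-transitivity; you need the chain statement.

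\textbf{Closedness.} Your claim that the image of $I$ in $\mathrm{Aut}_{\mathcal O}(C)$ ``lies in the algebraic group $\mathrm{Aut}_{\mathcal O}(C)$, hence \emph{is} an algebraic, in particular Lie, group'' is unjustified: an abstract subgroup of an algebraic group need not be closed. The paper does not argue via ``evaluation along cycles is jointly continuous''; instead it proves directly that $I$ is sequentially compact. Since $I$ acts on the universal family $\mathfrak X\to\mathcal M_D$ and $\nu$ is proper, Arzel\`a--Ascoli--Montel gives that any sequence in $I$ has a subsequence converging (on $\mathfrak X$, hence on $D$) to an element of $I$. Closedness of $R_C$ is then immediate. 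This is the clean replacement for your bootstrapping sketch.
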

Before giving the proof, let us note that by the basic
result of Bochner and Montgomery (\cite {BM}, Theorem 4)
it is enough to show that $I$ is a Lie group acting 
\emph{continuously} on $D$.  Thus we begin by looking 
a bit carefully at the topological properties of $I$.  For this let 
$\nu: \mathfrak X\to \mathcal M_D$ be the universal
family where $\mathfrak X=\{(z,C)\in D\times \mathcal M_D; z\in C\}$
and $\nu $ is the projection on the second factor.   
Again we emphasize that $\mathrm {Aut}_{\mathcal O}(D)$
may not act on $\mathfrak X$ but $I$ does, because
it stabilizes every cycle in $\mathcal M_D$.  Since $\nu $ is proper
and $I$-invariant, we may 
apply the theorem of Arzela-Ascoli-Montel 
to prove the following fact.
\begin {proposition}
Every sequence $\{g_n\}\in I$ has a subsequence which converges
in $I$.
\end {proposition}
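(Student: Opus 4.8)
The plan is to realize $I$ as a subgroup of the compact-open-topology automorphisms of $D$ and show that every sequence $\{g_n\}\subset I$ subconverges inside $I$; combined with the preceding discussion (reduction to continuity via Bochner--Montgomery) this will force $I$ to be a Lie group acting smoothly. The key leverage is that $I$ stabilizes every cycle $C\in\mathcal M_D$, so $I$ acts on the universal family $\nu:\mathfrak X\to\mathcal M_D$ fiberwise over $\mathcal M_D$, and $\nu$ is proper. Concretely, since $\mathcal M_D$ is compact (it is a component of $G.C_0\cap\mathcal C_q(D)$ and is hyperbolic, hence its automorphism action is tame) and each fiber $C$ is a compact complex manifold, the total space $\mathfrak X$ is compact.

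First I would apply the Arzel\`a--Ascoli--Montel normal-families argument to a sequence $\{g_n\}\subset I$. Each $g_n$ restricts to a biholomorphism of each cycle $C\in\mathcal M_D$, and these restrictions assemble to a homeomorphism of the compact space $\mathfrak X$ commuting with $\nu$. Because $\mathfrak X$ is compact and the $g_n$ are holomorphic on the fibers (and on $D$ itself), Montel's theorem gives a subsequence converging locally uniformly on $D$ to a holomorphic map $g_\infty:D\to D$; applying the same argument to $\{g_n^{-1}\}$ and passing to a further subsequence yields a holomorphic inverse, so $g_\infty\in\mathrm{Aut}_{\mathcal O}(D)$. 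The point that needs care is that $g_\infty$ again lies in $I$, i.e.\ that it still stabilizes every cycle in $\mathcal M_D$ and acts as the identity on $\tilde D$: this follows because the conditions "fixes each $C\in\mathcal M_D$ setwise" and "covers the identity of $\tilde D$" are closed conditions under locally uniform convergence, the limit cycle-stabilization being inherited from Hausdorff-continuity of $C\mapsto g_n(C)$.

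Next, using the preceding Proposition applied to the fibration $\pi:C\to C_{res}$ of each cycle, I would note that $I$ acts effectively on each such fiber; this prevents $I$ from containing large "vertical" pieces and is what ultimately keeps $I$ genuinely finite-dimensional rather than some infinite-dimensional holomorphic-section group. The upshot of the normal-families step is that $I$, with the compact-open topology, is sequentially compact-by-discrete in the relevant sense: every sequence subconverges, so $I$ is locally compact and second countable, hence (by Bochner--Montgomery, already invoked) a Lie group acting smoothly and holomorphically on $D$. Writing out the statement: since $\mathfrak X$ is compact and $\nu$ is proper and $I$-invariant, the family of restrictions of elements of $I$ to $\mathfrak X$ is equicontinuous, so by Arzel\`a--Ascoli--Montel every sequence $\{g_n\}\subset I$ has a subsequence converging to an element of $I$.

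The main obstacle I anticipate is the verification that the Montel limit $g_\infty$ lands back in $I$ and not merely in $\mathrm{Aut}_{\mathcal O}(D)$ --- one must check that the limit still acts as the identity on the quotient $\tilde D$ obtained from cycle separation, and this requires knowing that the cycle-separation equivalence classes vary continuously, which in turn uses the biholomorphism $\mathcal C_{\tilde q}(\tilde D)\cong\mathcal C_q(D)$ established earlier and the description $[p]=\bigcap_{C\in\mathcal F_p}C$. Once that closedness is in hand, the rest is a formal packaging of Montel plus Bochner--Montgomery.
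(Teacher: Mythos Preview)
Your overall strategy matches the paper's: exploit that $I$ acts on the universal family $\nu:\mathfrak X\to\mathcal M_D$, apply Arzel\`a--Ascoli--Montel there, and descend the limit to $D$. But there is a genuine error in your execution: you assert that $\mathcal M_D$ is compact (and hence that $\mathfrak X$ is compact), and you base your invocation of Montel on this. This is false. The cycle space $\mathcal M_D$ is an open domain in the $G$-orbit $G.C_0$ --- in fact, except in well-understood Hermitian cases it is Stein --- and hyperbolicity certainly does not imply compactness. Since your stated justification for normal-families convergence is ``$\mathfrak X$ is compact'', this is a gap, not a slip.

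The paper avoids this by using only that $\nu$ is \emph{proper} together with the fact that $\nu$ is $I$-invariant in the sense $\nu\circ g=\nu$ for every $g\in I$ (because $I$ acts trivially on $\mathcal C_q(D)\supset\mathcal M_D$, each $g\in I$ maps every $\nu$-fiber to itself). These two facts give local equiboundedness on $\mathfrak X$: for any compact $K\subset\mathfrak X$ one has $g_n(K)\subset\nu^{-1}(\nu(K))$, and the right-hand side is compact by properness of $\nu$. That is exactly the hypothesis needed for Arzel\`a--Ascoli--Montel to produce a subsequence converging in $\mathrm{Aut}_{\mathcal O}(\mathfrak X)$. The limit then descends to $D$ via the projection $\mu:\mathfrak X\to D$: for $x\in D$ pick any cycle $C\ni x$ and set $g(x):=(g\vert C)(x)$; this is well-defined because it already was for each $g_n$.

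Two further comments. First, your extended concern about whether the limit lands back in $I$ is unnecessary: $I$ is the kernel of the continuous homomorphism $\mathrm{Aut}_{\mathcal O}(D)\to\mathrm{Aut}_{\mathcal O}(\mathcal C_q(D))$ and is therefore closed in $\mathrm{Aut}_{\mathcal O}(D)$; once the limit is shown to lie in $\mathrm{Aut}_{\mathcal O}(D)$, membership in $I$ is automatic. Second, the material you include on effectiveness along $\pi$-fibers and on Bochner--Montgomery belongs to the proof of the subsequent Theorem, not to this Proposition; the Proposition is purely the normal-families step.
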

\begin {proof}
The application of the theorem of Arzela-Ascoli-Montel shows that
after passing to a subsequence 
$\{g_n\}$ converges to $g\in \mathrm {Aut}_{\mathcal O}(\mathfrak X)$.
But this implies that $g_n\vert C\to g\vert C$ for every
$C\in \mathcal M_D$.  Now view $g_n$ as a sequence in
$\mathrm {Aut}_{\mathcal O}(D)$. Given $x\in C\subset D$ the image point
$g_n(x)$ doesn't depend on $C$.  Thus the map
$g:D\to D$ is well-defined by $g(x):=g\vert C(x)$.
In other words $g$ descends via the map $\mathfrak X\to D$ 
to an automorphism of $D$.  
\end {proof}
Given $C\in \mathcal {M}_D$ we let $R_C:I\to \mathrm {Aut}_{\mathcal O}(C)$,
$g\mapsto g\vert C$, the restriction map. The following is an 
immediate consequence of the the above compactness
statement and the continuity of $R_C$.
\begin {corollary}
The restriction map $R_C:I\to \mathrm {Aut}_{\mathcal O}(C)$ 
is a closed map.
\end {corollary}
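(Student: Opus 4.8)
The plan is to read the corollary off the preceding Proposition: the sequential compactness of $I$, together with the continuity of $R_C$ and the metrizability of the target, forces $R_C$ to be a closed map. First I would record two preliminary observations. (i) The cycle $C\in\mathcal M_D$ is a compact flag manifold --- writing $C=g(C_0)$ and $\widehat G=gKg^{-1}$, it is $\widehat G/(\widehat G\cap Q)$ --- so $\mathrm{Aut}_{\mathcal O}(C)$ is a Lie group, hence a metrizable, in particular Hausdorff and first-countable, topological space; consequently a subset of $\mathrm{Aut}_{\mathcal O}(C)$ is closed precisely when it is sequentially closed. (ii) The topology on $I$ in which the preceding Proposition asserts convergence of subsequences is the one inherited from the $I$-action on the universal family $\mathfrak X$, and it is with respect to this topology that $R_C$ is continuous.

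Granting these, let $A\subseteq I$ be an arbitrary closed subset and let $h$ lie in the closure of $R_C(A)$ in $\mathrm{Aut}_{\mathcal O}(C)$. By metrizability of the target there is a sequence $g_n\in A$ with $R_C(g_n)=g_n\vert C\to h$. Applying the preceding Proposition to $\{g_n\}\subseteq I$ and passing to a subsequence, we obtain $g_n\to g$ for some $g\in I$; since $A$ is closed in $I$, in fact $g\in A$. Continuity of $R_C$ then gives $R_C(g)=\lim_n R_C(g_n)$, and since $\mathrm{Aut}_{\mathcal O}(C)$ is Hausdorff this limit equals $h$; hence $h=g\vert C\in R_C(A)$. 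Thus $R_C(A)$ is closed, which is precisely the assertion that $R_C$ is a closed map. Equivalently one may phrase it as: a closed subset of the sequentially compact space $I$ is sequentially compact, its continuous image in the first-countable Hausdorff space $\mathrm{Aut}_{\mathcal O}(C)$ is sequentially compact, and such an image is therefore closed.

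I do not expect a genuine obstacle here; the entire content is the sequential compactness of $I$, which has just been established. The only points deserving a word are the two preliminary observations above --- that $\mathrm{Aut}_{\mathcal O}(C)$ is metrizable, so that \emph{closed} may be tested on sequences, and that the various compact-open topologies (on $I$, on $\mathrm{Aut}_{\mathcal O}(\mathfrak X)$, and on $\mathrm{Aut}_{\mathcal O}(C)$) are mutually compatible, so that $R_C$ is continuous and convergence of $\{g_n\}$ in $I$ is not destroyed by restricting the automorphisms to $C$. Both are routine, and the corollary is indeed immediate.
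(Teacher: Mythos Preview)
Your proof is correct and is exactly the argument the paper has in mind; the paper simply declares the corollary an ``immediate consequence of the above compactness statement and the continuity of $R_C$'', and you have spelled out the routine point-set topology (metrizability of $\mathrm{Aut}_{\mathcal O}(C)$, sequential testing of closedness) that makes this implication go through.
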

\noindent
{\it Proof of Theorem \ref {ineffectivity}.}
Consider the $I$-invariant fibration $\pi:D\to \tilde D_{res}$ defined
by cycle separation with cycles in $\mathcal M_D$. If 
$C=g(C_0)\in \mathcal M_D$, then the restriction $\pi _C$ 
of $\pi $ to $C$
maps $C$ to a cycle $\tilde C_{res}$ in $\mathcal M_{\tilde D_{res}}$.
Both cycles are homogeneous with respect to 
$\widehat G:=gKg^{-1}$ and $\pi _C:C\to \tilde C_{res}$ is 
$\widehat G$-equivariant.  Let $I_C$ be the image of $I$
in $\mathrm {Aut}_{\mathcal O}(C)$ and $I_C(F)$ the image of $I_C$
in the automorphism group of a given $\pi_C$-fiber $F$.  The
above Propostion shows that the map $I_C\to I_C(F)$ defined
by restriction is an isomorphism.

Since $D$ is not holomorphically convex, we know that it
is cycle connected with respect to cycles in $\mathcal M_D$.
Thus $\tilde D_{res}$ is cycle connected with respect to
cycles in $\mathcal M_{\tilde D_{res}}$. Given two points
$\tilde p$ and $\tilde q$ in $\tilde D_{res}$ we connect
them with a chain $\tilde C^1_{res}\cup \ldots \cup\tilde C^m_{res}$
with intersection points $\tilde p_j$ so that $\tilde p_0=\tilde p$
and $\tilde p_n=\tilde q$.  Let $F_j$ be the $\pi $-fibers over
the $\tilde p_j$. The above argument shows that if $g\in I$
is in the kernel of the restriction map $I\to I_{C^j}$, then
it is in the kernel of the restriction map $I\to I_{C^j}(F_j)$.
Again applying the above argument, this implies that 
$g$ is in the kernel of $I\to I_{C^{j+1}}$. Using the fact that
$D$ is cycle connected, this shows that if
$g$ is in the kernel of the restriction map $I\to I_C(F)$ for
some cycle $C$ and some fiber $F$, then $g=\mathrm {Id}$,
i.e., the homomorphism $R_C :I\to I_C(F)$ is an injective
continuous homomorphism onto a subgroup of the Lie 
group $\mathrm {Aut}_{\mathcal O}(F)$.  It follows from the above Lemma 
that $R_C$ is a homeomorphism onto a closed subgroup $I_C(F)$ of 
$\mathrm {Aut}_{\mathcal O}(F)$.  Since $I_C(F)$ is a Lie group, it
follows follows that $I$ is a Lie group (see Theorem 1.1 of
Chapter VIII in [H]). Since it is a (closed) subgroup 
$\mathrm {Aut}_{\mathcal O}(D)$, the action of $I$ on $D$
is continuous. \qed

We have now shown that ineffectivity $I$ 
is a Lie group. It is normalized by $G_0$ and, since both
$I$ and $G_0$ are subgroups of the topological group
$\mathrm {Aut}_{\mathcal O}(D)$, the action of $G_0$ on
$I$ by conjugation is continuous. By the results in Chapter IX
of \cite{H}, in particular Theorem 3.1, the semidirect
product $I\rtimes G_0$ is a Lie group which is acting 
continuously on $D$. The following is therefore a
consequence of the Theorem of Bochner and Montgomery (\cite {BM}).
\begin {corollary}\label{semidirect}
The Lie group $I\rtimes G_0$ acts smoothly as a group
of holomorphic transformations of $D$.
\end {corollary}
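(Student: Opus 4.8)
The plan is to combine the three ingredients that have just been assembled: Theorem~\ref{ineffectivity}, which tells us that the ineffectivity $I$ of the $\mathrm{Aut}_{\mathcal O}(D)$-action on $\widehat D$ (equivalently on $\mathcal C_{\tilde q}(\tilde D)$) is a Lie group acting continuously on $D$; the fact that $G_0$ normalizes $I$ inside $\mathrm{Aut}_{\mathcal O}(D)$, so that both are closed subgroups of the same topological transformation group; and the Bochner--Montgomery theorem, which upgrades a continuous action by holomorphic transformations to a smooth one. The target statement of Corollary~\ref{semidirect} is then exactly the assertion that the abstract semidirect product $I\rtimes G_0$ is a Lie group and that the resulting action on $D$ is smooth.

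First I would make precise the semidirect product structure. Since $G_0$ normalizes $I$ and $I\cap G_0$ is trivial (the ineffectivity $I$ acts trivially on $\widehat D$, while $G_0$ acts transitively, indeed almost effectively, there), the subgroup of $\mathrm{Aut}_{\mathcal O}(D)$ generated by $I$ and $G_0$ is the internal semidirect product $I\rtimes G_0$, with $G_0$ acting on $I$ by conjugation. The key point is that this conjugation action $G_0\times I\to I$ is continuous: both factors sit inside the topological group $\mathrm{Aut}_{\mathcal O}(D)$ with its natural (compact-open) topology, so $(g,h)\mapsto ghg^{-1}$ is continuous, and $I$ is closed (or at least a topological subgroup) in that ambient group. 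Having a Lie group $I$ together with a continuous action of the Lie group $G_0$ by automorphisms, the structure theory of topological transformation groups --- precisely Theorem~3.1 of Chapter~IX of \cite{H} --- gives that $I\rtimes G_0$ carries a unique Lie group structure compatible with its topology, and that this Lie group acts continuously on $D$ by holomorphic transformations (the continuity of the action being inherited from the continuity of the $I$- and $G_0$-actions separately).

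Finally I would invoke Bochner--Montgomery: a Lie group acting continuously on a complex manifold by holomorphic transformations acts in fact smoothly, indeed real-analytically (\cite{BM}, Theorem~4, as already used in the proof of Theorem~\ref{ineffectivity}). Applying this to the Lie group $I\rtimes G_0$ and its continuous holomorphic action on $D$ yields the smoothness assertion of Corollary~\ref{semidirect}.

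The only genuine subtlety --- and the step I expect to require the most care --- is the verification that the conjugation action of $G_0$ on $I$ is continuous in the relevant topology, i.e.\ that the topology $I$ inherits as a subgroup of $\mathrm{Aut}_{\mathcal O}(D)$ agrees with its Lie group topology and that $G_0$ acts continuously for it. This is handled by the remark preceding the corollary: the Lie group topology on $I$ was produced in Theorem~\ref{ineffectivity} precisely by transporting the topology of a closed subgroup $I_C(F)$ of the Lie group $\mathrm{Aut}_{\mathcal O}(F)$, and the restriction map $R_C$ is a homeomorphism; since restriction to $C$ and to $F$ are continuous on all of $\mathrm{Aut}_{\mathcal O}(D)$, the subgroup topology on $I$ coincides with its Lie topology, and conjugation by $G_0$ --- being continuous on $\mathrm{Aut}_{\mathcal O}(D)$ and preserving $I$ --- is continuous for it. Once this compatibility is in hand, the cited structure theorem and Bochner--Montgomery do the rest with no further work.
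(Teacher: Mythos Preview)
Your argument is correct and follows essentially the same route as the paper: Theorem~\ref{ineffectivity} gives that $I$ is a Lie group, $G_0$ normalizes $I$ with continuous conjugation action since both sit in the topological group $\mathrm{Aut}_{\mathcal O}(D)$, Hochschild's Chapter~IX Theorem~3.1 then yields the Lie structure on $I\rtimes G_0$ with a continuous action on $D$, and Bochner--Montgomery upgrades continuity to smoothness. Two small remarks: you write $\widehat D$ where you mean $\tilde D$ (in this section $D$ is cycle connected, so $\widehat D$ is a point); and your claim that $I\cap G_0$ is trivial is a slight overstatement --- the intersection lies in the center of $G_0$ and is therefore finite, but need not be trivial --- though this is irrelevant to the corollary, which only asserts that the abstract semidirect product acts smoothly, not effectively.
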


\subsection* {Finiteness of the ineffectivity}
After all this discussion about the ineffectivity $I$ we 
are now in a position to show that it is essentially nonexistent:
Except when $D$ is holomorphically convex with a nontrivial
Remmert reduction, $I$ is finite! The following is the remaining 
tool needed for proving this.
\begin {proposition}\label {action extension}
If $\tilde G_0$ is a connected Lie group acting effectively on $D$
as a group of holomorphic transformations and $\tilde G_0\supset G_0$, 
then the action $\tilde G_0\times D\to D$ extends to a smooth
action $\tilde G_0\times Z\to Z$ by holomorphic transformations.
\end {proposition}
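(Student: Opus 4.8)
The plan is to reduce the extension problem to a question about parabolic subgroups and then to invoke the effective-transitivity structure on the compact flag manifold $Z$. First I would observe that since $\tilde G_0 \supset G_0$ and $G_0$ already acts transitively on $D$, the group $\tilde G_0$ acts transitively on $D$ as well, so $D = \tilde G_0 / \tilde H_0$ where $\tilde H_0 \cap G_0 = H_0$. Write $D = G_0/H_0 = G/Q \cap \{\text{open orbit}\}$, i.e. $D$ is an open $G_0$-orbit in $Z = G/Q$; the complex isotropy $Q$ at the base point $p_0$ is parabolic. Because $D$ is cycle connected and hyperbolicity of $\mathcal C_q(D)$ (Theorem \ref{hyperbolicity}) forces $I$ to be a Lie group acting smoothly (Corollary \ref{semidirect}), we already know $\tilde G_0$ is a Lie group acting smoothly on $D$ by holomorphic transformations. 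So the genuine content is the \emph{extension} of this action past the boundary of $D$ to all of $Z$.

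The key step is to pass to complexifications. Let $\tilde G$ be the universal complexification of $\tilde G_0$ (or the complexification inside $\mathrm{Aut}_{\mathcal O}$ of the smallest complex group generated by $\tilde G_0$); since $\tilde G_0$ acts by holomorphic transformations on the complex manifold $D$, the action of $\tilde G_0$ complexifies to a local holomorphic action of $\tilde G$ on $D$, hence on a neighborhood. The group $\tilde G$ contains $G$ because the complexification of $G_0$ is $G$ (we have arranged $G$ simply connected, and $G_0$ is a real form), and $G$ acts on $Z$ with $D$ open; thus $\tilde G$ acts holomorphically and \emph{locally} on the dense open set $D \subset Z$. Because $Z$ is compact and the $G$-action on $Z$ is algebraic with $\overline{D} = Z$, any holomorphic vector field on $D$ coming from the Lie algebra $\tilde{\mathfrak g}$ is, in particular, a holomorphic vector field on the dense open subset $D$ of the projective manifold $Z$; I would argue that it extends to a global holomorphic (indeed algebraic) vector field on $Z$. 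The cleanest route is: the Lie algebra $\tilde{\mathfrak g}$ of $\tilde G$ acts on $D$ by complete holomorphic vector fields (completeness because $\tilde G_0 \rtimes$ its complexification acts); these vector fields restricted to the Zariski-dense open $D$ extend to $Z$ by Hartogs/normality of $Z$ together with the fact that $Z \setminus D$ is a union of positive-codimension $G_0$-orbits — more precisely one uses that $Z$ is smooth projective and a vector field defined off a subvariety of a normal variety extends. Integrating the extended (now complete, by compactness of $Z$) vector fields produces a holomorphic $\tilde G$-action on $Z$ restricting to the given one on $D$, and then $\tilde G_0 \subset \tilde G$ acts smoothly on $Z$ as required.

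The main obstacle, and where I would spend the most care, is precisely the \emph{extension of the holomorphic vector fields across the boundary} $Z \setminus D$ and the \emph{completeness} of the extended fields. A priori $D$ could meet the singular part of lower $G_0$-orbits in a way that obstructs naive Hartogs-type extension; one needs to use that $Z$ is a smooth \emph{projective} manifold (so that $H^0(Z, \Theta_Z)$ is finite-dimensional and every holomorphic vector field on a dense open extends, since the complement is covered by the lower-dimensional Schubert/orbit strata and $Z$ is normal with no exceptional divisors to worry about), and then invoke that holomorphic vector fields on a compact complex manifold are automatically complete. A secondary subtlety is checking that the resulting global action is an \emph{action} (associativity), not just an infinitesimal action — this follows because the bracket relations of $\tilde{\mathfrak g}$ hold on the dense set $D$ and hence on all of $Z$ by the identity theorem, so the extended vector fields again span a Lie algebra isomorphic to $\tilde{\mathfrak g}$, integrating to an action of the simply-connected cover which then descends because it already descends on $D$. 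Once the holomorphic action $\tilde G \times Z \to Z$ is in hand, its restriction $\tilde G_0 \times Z \to Z$ is smooth and by holomorphic transformations, completing the proof.
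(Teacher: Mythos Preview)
Your argument has a genuine gap at the extension step. You write that the holomorphic vector fields on $D$ coming from $\tilde{\mathfrak g}$ extend to $Z$ ``by Hartogs/normality of $Z$ together with the fact that $Z\setminus D$ is a union of positive-codimension $G_0$-orbits,'' and then ``one uses that $Z$ is smooth projective and a vector field defined off a subvariety of a normal variety extends.'' But $Z\setminus D$ is \emph{not} a complex-analytic subvariety of $Z$; it is a real semialgebraic set, and in general it has nonempty interior (the other open $G_0$-orbits lie in it). Even when $D$ is the unique open $G_0$-orbit, the boundary is a real hypersurface, not a complex one. So neither Hartogs nor the ``normal variety minus a subvariety'' principle applies, and the phrase ``Zariski-dense open $D$'' is simply false. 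Concretely, take $G_0=\mathrm{SU}(1,1)$ acting on $Z=\mathbb P_1$ with $D$ the unit disk: the space of holomorphic vector fields on $D$ is infinite-dimensional, while $H^0(Z,\Theta_Z)$ is three-dimensional, so there is no automatic extension mechanism of the sort you invoke. The fields from $\tilde{\mathfrak g}$ do extend in that example, but for a reason your argument does not supply.

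The paper circumvents this difficulty altogether by not attempting to push vector fields across $\partial D$. Instead it uses the $\tilde{\mathfrak g}$-anticanonical map: wedge products $\xi_1\wedge\cdots\wedge\xi_n$ of fields from $\tilde{\mathfrak g}$ span a finite-dimensional space $\tilde V$ of sections of the anticanonical bundle of $D$, giving a $\tilde G_0$-equivariant holomorphic map $\varphi:D\to\mathbb P(\tilde V^*)$. Because the $\mathfrak g$-anticanonical map of $Z=G/Q$ is already an embedding (Borel's normalizer theorem gives $N_G(Q)=Q$), and $V\subset\tilde V$, the map $\varphi$ is an embedding too. Now one works entirely in $\mathbb P(\tilde V^*)$, where the smallest complex group $\tilde G\subset\mathrm{GL}(\tilde V)$ containing $\tilde G_0$ acts linearly: the $G$-orbit of the base point is compact (it is $G/Q=Z$), hence open and closed in the $\tilde G$-orbit, so $\tilde G.z_0=Z$ and the $\tilde G_0$-action on $Z$ is obtained by restriction. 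The point is that the anticanonical construction linearizes the problem and replaces the delicate ``extend across a real boundary'' step by an elementary orbit-closure argument in projective space.
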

\begin {proof}
Let $n:=\mathrm {dim}_{\mathbb C}(D)$ and consider the space
$\tilde V$ of sections of the anticanonical bundle of $D$
spanned by elements of the form $\xi _1\wedge \ldots \wedge \xi_n$
where the $\xi_j$ are arbitrary holomorphic vector fields defined
by the local $\tilde G$-action on $D$.
Define the $\tilde {\mathfrak g}$-anticanonical map
$\varphi :D\to \mathbb P(\tilde V^*)$ by 
$p\mapsto \{s\in \tilde V; s(p)=0\}$. Since $\tilde G_0$ acts
on $\tilde V$ and $D$ is homogeneous,
$\varphi $ is a $\tilde G_0$-equivariant holomorphic map
(see \cite {HO} for other basic properties of this map).

Let $p_0$ be the neutral point in $D$ and $z_0:=\varphi (p_0)$
its image in $\mathbb P(\tilde V^*)$. Now $D$ is embedded
in the $G$-flag manifold $Z$ whose anticanonical bundle is
very ample.  In fact the $\mathfrak g$-anticanonical map
extends to $Z=G/Q$ as the Tits fibration $G/Q\to G/N$, where
$N$ is the normalizer of $Q$ in $G$.  Since $N=Q$ (Borel's 
Normalizer Theorem), the 
$\mathfrak g$-anticanoncal map of $D$ is an embedding. Since
the vector space $V$ which is defined by limiting
the $\xi _j$ to fields defined by $\mathfrak g$ is
contained in $\tilde V$, it is therefore immediate that
$\varphi $ is an embedding.

Thus we may regard $D$ as 
$\tilde G_0$-orbit $\tilde G_0.z_0$ in $\mathbb P(\tilde V^*)$
and define $\tilde G$ to be the smallest complex Lie subgroup 
containing the group $\tilde G_0$ in $\mathrm {GL}(\tilde V)$.  Hence
$D$ can be regarded as the open $\tilde G_0$-orbit $\tilde G_0.z_0$
in the complex orbit $\tilde G.z_0$.  

After this lengthy background, we consider the complex
orbit $G.z_0$ which is an open subset in $\tilde G.z_0$.
Since $\varphi $ is an embedding, the isotropy algebra
$\mathfrak g_{z_0}$ is just the Lie algebra $\mathfrak q$.
Thus the Lie algebra of $G_{z_0}$ is $\mathfrak q$ and
therefore $G_{z_0}=Q$, in particular $G.z_0=G/Q=Z$ is compact.
Since $G.z_0$ is open in $\tilde G.z_0$ and both groups are
connected, it follows that $\tilde G.z_0=G.z_0=Z$ as desired.
\end {proof}  
\begin {theorem}\label {discrete ineffectivity}
Unless $D$ is holomorphically convex with nontrivial
Remmert reduction, the ineffectivity
$I$ of the action of $\mathrm {Aut}_{\mathcal O}(D)$ on
the cycle space $\mathcal C_q(D)$ is finite.
\end {theorem}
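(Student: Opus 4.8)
The plan is to exploit Corollary~\ref{semidirect} together with Proposition~\ref{action extension} to show that the connected component $I^\circ$ of the ineffectivity must be trivial (unless $D$ is holomorphically convex), which together with Proposition~\ref{injectivity} and the finite-fiberedness built into the cycle-space machinery will give the finiteness of $I$. First I would set up $\tilde G_0 := I^\circ \rtimes G_0$, which by Corollary~\ref{semidirect} is a connected Lie group acting smoothly and holomorphically on $D$. Replacing $\tilde G_0$ by its quotient by the (finite, by semisimplicity of the relevant automorphism groups) ineffectivity of its action on $D$, we may assume $\tilde G_0$ acts effectively. Proposition~\ref{action extension} then applies: the action extends to a holomorphic action $\tilde G_0 \times Z \to Z$, and in the course of that proof one obtains that the complexification $\tilde G \supset G$ also acts transitively on $Z$, with the $G$-isotropy still equal to $Q$, so $Z = \tilde G/\tilde Q$ for a parabolic $\tilde Q \subset \tilde G$.

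Next I would run the cycle-separation reduction $D \to \tilde D$ and the refined reduction $D \to \tilde D_{res}$ using only cycles from $\mathcal M_D$. By construction $I$ (hence $I^\circ$) acts trivially on $\tilde D$, and as noted in the text $D \to \tilde D_{res}$ is $I$-invariant as well. Now invoke the Lemma identifying $\mathcal C_{\tilde q}(\tilde D) \cong \mathcal C_q(D)$ biholomorphically: by Proposition~\ref{cycles separate} the cycles separate the points of $\tilde D$, so by Proposition~\ref{injectivity} the homomorphism $\mathrm{Aut}_{\mathcal O}(\tilde D) \to \mathrm{Aut}_{\mathcal O}(\mathcal C_{\tilde q}(\tilde D))$ is injective. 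Since $\mathcal C_{\tilde q}(\tilde D) \cong \mathcal C_q(D)$ is Kobayashi-hyperbolic by Theorem~\ref{hyperbolicity}, its automorphism group is a Lie group, and the ineffectivity of $\mathrm{Aut}_{\mathcal O}(D)$ on $\mathcal C_q(D)$ is precisely the ineffectivity on $\tilde D$, i.e.\ it contains $I$. The task is thus to show this common ineffectivity is finite, equivalently that $I^\circ$ is trivial, equivalently that the fibers of $D \to \tilde D$ (which are contained in the base cycle $C_0$, again by the cycle-separation Proposition) are points.

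Suppose $I^\circ \neq 1$. Then the fibration $D \to \tilde D$ is nontrivial, with positive-dimensional fibers $F \subset C_0$, and by the saturation statement it is the restriction of a $G$-equivariant fibration $Z \to \tilde Z$. The extended $\tilde G$-action (from Proposition~\ref{action extension}) must respect the structure we have built: $I^\circ$ stabilizes every fiber $F$ and, by the Proposition on effectivity on $\pi$-fibers applied to $C_0 \to \tilde C_0$, acts effectively on each $F$. But $F$ is a compact complex submanifold of the flag domain $D$, hence (being inside $C_0 = K_0.p_0$, which is compact and homogeneous) $F$ itself is a flag manifold of a quotient of $K$, and a positive-dimensional compact complex homogeneous manifold is never a complex submanifold of a flag domain that admits extra automorphisms without forcing $D$ to fiber holomorphically over a Stein base — precisely the situation that, via the Corollary in \S\ref{cycle connectivity}, is equivalent to $D$ being holomorphically convex with nontrivial Remmert reduction. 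This is the case excluded by hypothesis ($D$ is cycle connected throughout this section), a contradiction; hence $I^\circ = 1$ and $I$ is a discrete closed subgroup of the Lie group $\mathrm{Aut}_{\mathcal O}(\mathcal C_q(D))$. Finally, $I$ is compact — every sequence in $I$ has a convergent subsequence by the Arzela--Ascoli--Montel argument already used — so a discrete compact group is finite.

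The main obstacle is the last geometric step: ruling out a nontrivial fiber $F$ in the cycle-connected case. One must show cleanly that a positive-dimensional compact homogeneous $F \subset C_0$ on which $I^\circ$ acts effectively and which arises as the fiber of a $G$-equivariant $Z \to \tilde Z$ forces $\mathcal O(D) \neq \mathbb C$. I expect this to come from combining the fibration $Z \to \tilde Z \to \tilde Z_{res}$ with the classification of which $G$-equivariant fibrations of $Z$ restrict to Stein-based fibrations of $D$ — i.e.\ the Hermitian case analyzed in \S\ref{cycle connectivity} — so that "nontrivial fiber inside $C_0$" and "cycle connected" are genuinely incompatible. Making that incompatibility precise, rather than merely plausible, is where the real work lies.
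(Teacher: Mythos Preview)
Your setup is right—forming $\tilde G_0 = I^\circ \rtimes G_0$, invoking Corollary~\ref{semidirect} and Proposition~\ref{action extension}, and noting that $I^\circ \neq 1$ forces the cycle-separation reduction $D \to \tilde D$ (hence the ambient fibration $Z \to \tilde Z$) to be nontrivial. Your endgame is also fine: once $I$ is discrete, the Arzela--Ascoli--Montel compactness already established for $I$ gives finiteness; this is a legitimate alternative to the paper's argument via the centralizer of $G_0$.

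The gap is the middle step. You try to derive a contradiction directly from ``$D \to \tilde D$ has positive-dimensional fiber $F$'' together with ``$D$ is cycle connected,'' but these two conditions are \emph{not} incompatible: failure of cycle separation does not imply failure of cycle connectivity. Your sketched route—``a positive-dimensional compact homogeneous $F \subset C_0$ carrying an effective $I^\circ$-action forces $\mathcal O(D)\neq\mathbb C$''—is not a known statement, and there is no mechanism in \S\ref{cycle connectivity} that produces a Stein-based fibration from this data. You yourself flag that making this ``incompatibility precise'' is where the work lies; in fact the incompatibility as you have stated it does not exist.

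The paper obtains the contradiction by a completely different, group-theoretic route that you set up but then abandon. From $I^\circ \neq 1$ one argues (via a short Lie-algebra computation: $\tilde{\mathfrak g}_0 = \mathfrak i \oplus \mathfrak g_0$ with $\mathfrak i$ semisimple, so $\tilde{\mathfrak g}$ is not simple, and one rules out $\tilde{\mathfrak g} = \mathfrak g$ using the standing simplicity assumption on $\mathfrak g_0$) that the complexification $\tilde G$ \emph{properly} contains $G$. Then Onishchik's classification of pairs of complex semisimple groups acting transitively on the same flag manifold forces $Q$ to be a \emph{maximal} parabolic in $G$. But the nontrivial fibration $Z \to \tilde Z$ you already produced shows $Q$ is \emph{not} maximal. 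That is the contradiction. The missing ingredient in your proposal is precisely this appeal to Onishchik; without it, the argument does not close.
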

\begin {proof}
Note that the connected component $I^\circ$ is normalized
by $G_0$ and Corollary \ref{semidirect} implies that  
$\tilde G_0:=I^\circ\rtimes G_0$
is a connected Lie group which acts smoothly as a group
of holomorphic transformations on $D$.  The ineffectivity of
this action is contained in the center of $G_0$ and is therefore
discrete.  By definition $G_0$ acts on $Z$ and it follows
from the above Proposition that $\tilde G_0$ acts on $Z$.
By definition the complexification $G$ acts transitively
on $Z$ and therefore the smallest complex subgroup 
$\tilde G$ of $\mathrm {Aut}_{\mathcal O}(Z)$ which contains
$\tilde G_0$ also acts transitively.  

We claim that
$\tilde G$ properly contains $G$. To see this note that
since $\tilde G$ acts transitively on $Z$, it is semisimple.
Thus the Lie algebra $\mathfrak i$ is semisimple 
and $\tilde {\mathfrak g}_0$ is the direct sum 
$\mathfrak {i}\oplus \mathfrak {g}_0$. As a result 
$\tilde {\mathfrak g}$ is not simple.  Thus if 
$\mathfrak g=\tilde {\mathfrak g}$, it follows that
$\mathfrak g$ is not simple and therefore $\mathfrak {g}_0$
is (abstractly) a simple complex Lie algebra embedded
antiholomorphically in its complexification 
$\mathfrak g=\mathfrak {g}_0\oplus \mathfrak {g}_0$. But
if $\mathfrak g=\tilde {\mathfrak g}$, then the image of
$\mathfrak {g}_0$ in $\mathfrak g$ centralizes 
$\mathfrak i$.  On the other hand this image has trivial centralizer.
Thus $\tilde{\mathfrak g}$ properly contains $\mathfrak g$.

As a result we are in the situation where two complex
Lie groups $G$ and $\tilde G$ act transitively on
$Z$. Since $\tilde G$ properly contains $G$, it follows
that $Z=G/Q$ where $Q$ is a \emph{maximal} parabolic 
subgroup (\cite {O1,O2}).  But this situation arises when
$D\to \tilde D$ is a nontrivial reduction by cycle separation.
Since this reduction is the restriction of a fibration
$Z\to \tilde Z$ of the ambient flag manifold, $Q$ is \emph{not}
a maximal parabolic subgroup of $G$.  Thus $\mathfrak i$ is
trivial and therefore $I$ is discrete. But since it is normalized,
and therefore centralized by $G_0$, it is finite.
\end {proof}

\section {Lie group properties}\label{Lie group}
Let us begin by stating the main result of this section.
For this we recall that if a flag domain $D$ is not
cycle connected, then it is holomorphically
convex and the reduction $D\to \widehat D$ by cycle
connectivity equivalence is its Remmert reduction.
The base $\widehat D$ is a Hermitian symmetric space
of noncompact type and is therefore realizable as
a bounded Stein domain.  In particular, if $D=\widehat D$, 
then $\mathrm{Aut}_{\mathcal O}(D)$ is a Lie group acting
smoothly on $D$ as a group of holomorphic transformations.
If $D$ is not cycle connected and $D\not=\widehat D$, then
$\mathrm {Aut}_{\mathcal O}(D)$ is infinite-dimensional, but
nevertheless easily describable (see \ref{infinite-dimensional}).
Here we shall prove that if $D$ is cycle connected, then
$\mathrm {Aut}_{\mathcal O}(D)$ is a Lie group acting smoothly
on $D$ as a group of holomorphic transformations.  This
then completes the proof of the following result.
\begin {theorem}\label{lie structure}
If $D$ is a flag domain with reduction $D\to \widehat D$
by cycle connectivity, then one of the following three
cases holds:
\begin {enumerate}
\item
$D$ is not cycle connected, in which case
it is holomorphically convex. The reduction $D\to \widehat D$
is the Remmert reduction of $D$ has positive-dimensional
fiber $F$ and positive-dimensional base $\widehat D$ 
which is a Hermitian symmetric space of noncompact type. 
The group of automorphisms, which can be described by
$$
\mathrm {Aut}_{\mathcal O}(D)\cong 
\mathrm{Hol}(\widehat {D}, \mathrm {Aut}_{\mathcal O}(F))
\rtimes \mathrm {Aut}_{\mathcal O}(\widehat {D})\,,
$$
is infinite-dimensional.
\item
$D$ is a Hermitian symmetric space of 
noncompact type, i.e., $D=\widehat D$, and $\mathrm{Aut}_{\mathcal O}(D)$ 
is an Lie group acting smoothly on $D$ as a group of holomorphic
transformations.
\item
$D$ is cycle connected, i.e., $\widehat D$ is just a point, and
$\mathrm {Aut}_{\mathcal O}(D)$ is a Lie group acting smoothly
on $D$ as a group of holomorphic transformations.
\end {enumerate}
\end {theorem}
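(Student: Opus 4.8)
The three cases are exhaustive: by the Corollary of $\S\ref{cycle connectivity}$ either $D$ is cycle connected, or it is holomorphically convex with nontrivial Remmert reduction $D\to\widehat D$ onto the associated Hermitian symmetric space of noncompact type, in which latter situation either $D=\widehat D$ or $D\neq\widehat D$. The plan is therefore to dispose of the first two cases by quoting earlier results and to concentrate the real work on the cycle-connected case. If $D$ is not cycle connected and $D=\widehat D$, then $D$ is a Hermitian symmetric space of noncompact type, hence biholomorphic to a bounded domain, and it is classical that $\mathrm{Aut}_{\mathcal O}(D)$ is then a Lie group acting smoothly on $D$ by holomorphic transformations. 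If $D$ is not cycle connected and $D\neq\widehat D$, then the neutral fiber $F=C_0$ of the Remmert reduction is positive-dimensional and so is $\widehat D$, and Proposition \ref{infinite-dimensional} gives
$$
\mathrm{Aut}_{\mathcal O}(D)\cong\mathrm{Hol}(\widehat D,\mathrm{Aut}_{\mathcal O}(F))\rtimes\mathrm{Aut}_{\mathcal O}(\widehat D),
$$
which is infinite-dimensional because $\mathrm{Hol}(\widehat D,\mathrm{Aut}_{\mathcal O}(F))$ is (here $\mathrm{Aut}_{\mathcal O}(F)$ is a positive-dimensional complex Lie group, $F$ being a positive-dimensional compact homogeneous rational manifold). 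Thus from now on I assume $D$ is cycle connected, equivalently $\mathcal O(D)\cong\mathbb C$.

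The idea for the cycle-connected case is to transport the question to the Kobayashi-hyperbolic space $\mathcal C_q(D)$. By Theorem \ref{hyperbolicity} that space is hyperbolic, so $\mathrm{Aut}_{\mathcal O}(\mathcal C_q(D))$ is a finite-dimensional Lie group (\cite{K}). Consider the canonical homomorphism
$$
\rho:\mathrm{Aut}_{\mathcal O}(D)\longrightarrow\mathrm{Aut}_{\mathcal O}(\mathcal C_q(D)),\qquad g\mapsto\bigl(C\mapsto g(C)\bigr),
$$
coming from the action of $\mathrm{Aut}_{\mathcal O}(D)$ on $\mathcal C_q(D)$. Its kernel is the ineffectivity $I$ of that action, which by Theorem \ref{discrete ineffectivity} — applicable since $D$, being cycle connected, is not holomorphically convex — is a \emph{finite} group. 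Hence $\rho$ is finite-fibered.

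It then remains to show that $\rho$ is a local homeomorphism onto a closed subgroup of $\mathrm{Aut}_{\mathcal O}(\mathcal C_q(D))$; granting that, $\mathrm{Aut}_{\mathcal O}(D)$ is locally Euclidean, hence a Lie group, and, since it acts continuously in the compact-open topology on $D$ by holomorphic transformations, the theorem of Bochner--Montgomery (\cite{BM}) upgrades the action to a smooth one, which finishes the proof. The closedness and properness of $\rho$ I would establish by a normal-families argument modelled on the proof of Theorem \ref{ineffectivity}: given $g_n\in\mathrm{Aut}_{\mathcal O}(D)$ with $\rho(g_n)\to\psi$ in $\mathrm{Aut}_{\mathcal O}(\mathcal C_q(D))$, one has $g_n(C)\to\psi(C)$ for every $C\in\mathcal C_q(D)$; passing to the universal family $\mathfrak X\to\mathcal C_q(D)$, which is proper, and invoking Arzela-Ascoli-Montel, a subsequence of $(g_n)$ converges uniformly on compacta in $D$ to a holomorphic map $g:D\to D$, and applying the same reasoning to $(g_n^{-1})$ along a diagonal subsequence yields a holomorphic two-sided inverse, so $g\in\mathrm{Aut}_{\mathcal O}(D)$ with $\rho(g)=\psi$. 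This shows that the image of $\rho$ is closed and that $\rho$ is proper; combined with the finiteness of $\ker\rho=I$ it follows that $\rho$ carries a suitable neighborhood of the identity homeomorphically onto a neighborhood of the identity in the closed subgroup $\rho(\mathrm{Aut}_{\mathcal O}(D))$, and hence (by translation) is a local homeomorphism onto its closed image.

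The main obstacle is precisely this last step — showing that automorphisms of $D$ which converge on the cycle space converge to an automorphism of $D$ — and it is here that both cycle connectedness (so that the cycles through which one propagates the limiting map sweep out all of $D$, cf. the proof of Theorem \ref{ineffectivity}) and the hyperbolicity of $\mathcal C_q(D)$ (which is what forces the normal-family behaviour of the $\rho(g_n)$) enter; everything else is formal bookkeeping with topological groups.
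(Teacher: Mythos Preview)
Your proposal is correct and follows essentially the paper's own strategy: dispose of cases 1 and 2 by the earlier results on the Remmert reduction (Proposition \ref{infinite-dimensional} and the bounded-domain fact), and in the cycle-connected case combine the hyperbolicity of $\mathcal C_q(D)$, the finiteness of the ineffectivity (Theorem \ref{discrete ineffectivity}), a closedness argument via Arzel\`a--Ascoli on the universal family, and Bochner--Montgomery.

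The one structural difference is that the paper does not go directly from $\mathrm{Aut}_{\mathcal O}(D)$ to $\mathrm{Aut}_{\mathcal O}(\mathcal C_q(D))$ in the general cycle-connected case. It first proves the closedness of $\iota:\mathrm{Aut}_{\mathcal O}(D)\to\mathrm{Aut}_{\mathcal O}(\mathcal C_q(D))$ (your $\rho$) for \emph{any} flag domain, then splits: if cycles separate points ($D=\tilde D$), $\iota$ is injective and one is done; if not, one passes through the intermediate map $\pi_*:\mathrm{Aut}_{\mathcal O}(D)\to\mathrm{Aut}_{\mathcal O}(\tilde D)$, which is closed with finite kernel onto a Lie group (since $\tilde D$ has already been handled), and concludes that $\pi_*$ is a covering. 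Your direct route---$\rho$ closed with finite kernel onto a closed subgroup of the Lie group $\mathrm{Aut}_{\mathcal O}(\mathcal C_q(D))$, hence a covering onto its image---is a legitimate shortcut that avoids the $\tilde D$ detour; the paper's splitting is expository rather than essential. One small correction: in your last paragraph you attribute the normal-families step to cycle connectedness, but in fact the closedness of $\rho$ (the paper's Proposition \ref{closed map}) holds for \emph{every} flag domain and uses only that $\mu:\mathfrak X\to D$ is surjective (every point lies on some $G_0$-translate of $C_0$); cycle connectedness enters only through Theorem \ref{discrete ineffectivity}.
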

Let us now complete the proof of this theorem.  We
must only handle the case where $D$ is cycle connected or
equivalently where $\mathcal O(D)\cong \mathbb C$.
The proof uses the fact that 
$\mathrm {Aut}_{\mathcal O}(D)$ can essentially be identified
with a subgroup of $\mathrm {Aut}_{\mathcal O}(\mathcal {C}_q(D))$.
The following is a first step in this direction.
\begin {proposition}\label{closed map}
If $D$ is any flag domain, then the canonically defined 
continuous homomorphism
$$
\iota :\mathrm {Aut}_{\mathcal O}(D)\to 
\mathrm {Aut}_{\mathcal O}(\mathcal C_q(D))
$$
is a closed map.
\end {proposition}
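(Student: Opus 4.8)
The plan is to show that $\iota$ is closed by exploiting the universal family together with a normal-families argument, very much in the spirit of the proof of Theorem \ref{ineffectivity}. First I would take a sequence $g_n \in \mathrm{Aut}_{\mathcal O}(D)$ whose images $\iota(g_n)$ converge to some $\Phi \in \mathrm{Aut}_{\mathcal O}(\mathcal C_q(D))$, and aim to produce $g \in \mathrm{Aut}_{\mathcal O}(D)$ with $\iota(g) = \Phi$ (this suffices for a continuous homomorphism between, say, topological groups satisfying the relevant first-countability, since the image of a convergent sequence then has its limit in the image). Since $\iota(g_n) \to \Phi$ uniformly on compacta in $\mathcal C_q(D)$, in particular the action on the base cycle $C_0$ and on all cycles near it is controlled; the key geometric input is that $D$ is covered by cycles in $\mathcal C_q(D)$ (every point lies on a translate $g(C_0)$), so the pointwise behavior of $g_n$ on $D$ is encoded in their behavior on the family of cycles.

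The central device is the universal family $\mathfrak X = \{(z,C) : z \in C\} \subset D \times \mathcal C_q(D)$ with its proper projection $\nu$ to $\mathcal C_q(D)$. Although $\mathrm{Aut}_{\mathcal O}(D)$ does not act on $\mathfrak X$ directly, each $g_n$ induces a biholomorphism $\hat g_n$ of $\mathfrak X$ covering $\iota(g_n)$, namely $(z,C) \mapsto (g_n(z), g_n(C))$. Because $\iota(g_n) \to \Phi$ and $\nu$ is proper, the family $\{\hat g_n\}$ is locally bounded, so by Arzelà--Ascoli--Montel (exactly as invoked before Proposition \ref{injectivity}) a subsequence converges to a holomorphic self-map $\hat g$ of $\mathfrak X$ covering $\Phi$; running the same argument on the inverses $\iota(g_n)^{-1} \to \Phi^{-1}$ shows $\hat g$ is a biholomorphism. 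Projecting to $D$ via $\mathfrak X \to D$, the limit $\hat g$ descends to a well-defined holomorphic self-map $g$ of $D$: the point $\hat g(z,C)$ has first coordinate independent of the cycle $C$ through $z$, because this independence holds for each $\hat g_n$ and is preserved under locally uniform limits. The same argument applied to the inverse sequence gives $g \in \mathrm{Aut}_{\mathcal O}(D)$, and by construction $\iota(g) = \Phi$.

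The final step is to pass from "limits of convergent sequences of images lie in the image" to the statement that $\iota$ is a \emph{closed map}: I would check that $\iota$ is proper onto its image — i.e. that $\iota^{-1}(\text{compact})$ is compact. Given a compact $L \subset \mathcal C_q(D)$ and any sequence $g_n$ with $\iota(g_n) \in L$, the normal-families argument above extracts a subsequence with $\iota(g_n) \to \Phi \in L$ and, as shown, $g_n \to g \in \mathrm{Aut}_{\mathcal O}(D)$ with $\iota(g) = \Phi$; hence $\iota^{-1}(L)$ is sequentially compact, and since these groups are metrizable (subgroups of homeomorphism groups of second-countable manifolds with the compact-open topology), compact. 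A proper continuous map to a locally compact Hausdorff space is closed, which gives the claim.

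The main obstacle I anticipate is the descent step: verifying carefully that the limit map $\hat g$ on $\mathfrak X$ really does factor through the projection to $D$, i.e. that the first coordinate of $\hat g(z,C)$ does not depend on $C$. For each $n$ this is immediate from the definition of $\hat g_n$, but one must argue that locally uniform convergence on $\mathfrak X$ transmits this; the cleanest route is to note that $g_n(z)$, viewed as a function on $\mathfrak X$, is literally the pullback of a function on $D$, so the limit is a pointwise limit of such pullbacks along $\nu$-fibers and hence still constant on the fibers of $\mathfrak X \to D$ over any point with a connected fiber. One has to be slightly careful that $\mathfrak X \to D$ has connected fibers over a dense set and handle the general point by continuity — but since $D$ is cycle connected in the relevant cases this is unproblematic, and for the general flag domain one restricts to cycles in a single $\mathrm{Aut}_{\mathcal O}(D)$-orbit through $C_0$, which again sweep out all of $D$.
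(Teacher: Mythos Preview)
Your proof is correct and follows essentially the same route as the paper's: lift the automorphisms to the universal family $\mathfrak X$, use properness of $\nu$ together with Arzel\`a--Ascoli to extract a convergent subsequence on $\mathfrak X$, and then descend to $D$. Your anticipated obstacle at the descent step is not a genuine one: since the first coordinate of $\hat g_n(z,C)$ is $g_n(z)$ for every $n$, the pointwise limit is automatically independent of $C$---no connectedness of the $\mu$-fibers is needed, so the detour through connected fibers and cycle connectivity can be dropped entirely.
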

\begin {proof}
Let $\mathfrak X\subset D\times \mathcal C_q(D)$ be the 
universal family of cycles with its projections
$\mu:\mathfrak X\to D$ and $\nu:\mathfrak X\to \mathcal C_q(D)$.
Suppose $F$ is a closed subset of $\mathrm {Aut}_{\mathcal O}(D)$
and that $g_n=\iota (h_n)$ defines a sequence in $\iota (F)$ which
converges to $g\in \mathrm {Aut}_{\mathcal O}(\mathcal C_q(D))$.
Observe that the $g_n$ act on $\mathfrak X$ and denote them
there by $\tilde g_n$.  Since $\nu:\mathfrak X\to \mathcal C_q(D)$
is proper and $g_n\to g$, it follows that the sequence 
$\{\tilde g_n\}$ is equibounded.  It therefore follows from
the theorem of Arzela-Ascoli that after going to a
subsequence $\tilde g_{n_k}\to \tilde g$, where $\tilde g$
is a lift of $g$ to $\mathfrak X$.  Now the $\tilde g_n$
are lifts of the $h_n$ and since $\tilde g_n\to \tilde g$,
the automorphism $\tilde g$ of $\mathfrak X$ descends to
an automorphism $h$ of $D$ with $h_n\to h$.  Since $F$
is closed, $h\in F$ and, since $\iota $ is continuous,
$\iota(h)=g$.  Thus $\iota (F)$ is likewise closed.
\end {proof}
\begin {corollary}
If cycles separate points in $D$, then 
$$
\iota: \mathrm {Aut}_{\mathcal O}(D)\to 
\mathrm {Aut}_{\mathcal O}(\mathcal C_q(D))
$$
is an injective homeomorphism onto a closed
subgroup of $\mathrm {Aut}_{\mathcal O}(\mathcal C_q(D))$.
\end {corollary}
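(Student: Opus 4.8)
The plan is to combine the two immediately preceding results, Proposition~\ref{injectivity} and Proposition~\ref{closed map}, from which the corollary follows formally with no further geometric input. First I would invoke Proposition~\ref{injectivity}: under the standing hypothesis that cycles separate the points of $D$, every cycle-separation class $[p]$ is the singleton $\{p\}$, so an automorphism $g$ acting trivially on $\mathcal C_q(D)$ preserves each family $\mathcal F_p$, hence each intersection $\mathcal I_p$, hence each point of $D$; thus $\iota$ is injective. Next I would recall that Proposition~\ref{closed map} already asserts, for an \emph{arbitrary} flag domain $D$ and with no separation hypothesis, that $\iota$ is a continuous closed map.

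The remaining step is the standard topological observation that a continuous, injective, closed map is a homeomorphism onto its image, and that image is closed. Concretely: since $\iota$ carries closed subsets of $\mathrm{Aut}_{\mathcal O}(D)$ to closed subsets of $\mathrm{Aut}_{\mathcal O}(\mathcal C_q(D))$, restricting the codomain to $\iota(\mathrm{Aut}_{\mathcal O}(D))$ shows that the inverse bijection $\iota^{-1}$ is continuous; together with continuity of $\iota$ this makes $\iota$ a homeomorphism onto its image. Applying the closedness of $\iota$ to the whole (trivially closed) space $\mathrm{Aut}_{\mathcal O}(D)$ shows that $\iota(\mathrm{Aut}_{\mathcal O}(D))$ is closed in $\mathrm{Aut}_{\mathcal O}(\mathcal C_q(D))$, and since $\iota$ is a group homomorphism this image is a subgroup. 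This yields exactly the assertion of the corollary.

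I do not expect any genuine obstacle here, since all the substance has already been carried by the Arzel\`a--Ascoli compactness argument on the universal family $\mathfrak X$ in the proof of Proposition~\ref{closed map} and by the identification $[p]=\{p\}$ in the proof of Proposition~\ref{injectivity}. The only point worth a sentence of care is that ``closed map'' in Proposition~\ref{closed map} is understood in the literal sense (images of closed sets are closed), so that the implication ``continuous $+$ injective $+$ closed $\Rightarrow$ topological embedding with closed image'' applies verbatim; granting that reading, the corollary is immediate.
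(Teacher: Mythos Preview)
Your proposal is correct and follows exactly the paper's approach: the paper's own proof simply says ``It remains to prove the injectivity. However, this is the content of Proposition~\ref{injectivity},'' leaving the closedness and continuity to Proposition~\ref{closed map} and the topological embedding statement implicit. Your write-up spells out the standard ``continuous $+$ injective $+$ closed $\Rightarrow$ homeomorphism onto a closed image'' step that the paper takes for granted, which is a welcome clarification but not a different argument.
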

\begin {proof}
It remains to prove the injectivity. However, this is
the content of Proposition \ref{injectivity}.
\end {proof}
Using the Theorem of Bochner and Montgomery (\cite {BM}, Theorem 4)
the above corollary shows that Theorem \ref{lie structure} holds
in the case where $D=\tilde D$. In the case where the
fiber of the reduction $D\to \tilde D$ is positive-dimensional
we make use of the following result. 
\begin {proposition}
The continuous homomorphism 
$$
\pi_*:\mathrm {Aut}_{\mathcal O}(D)\to 
\mathrm {Aut}_{\mathcal O}(\tilde D)
$$
which is induced by cycle separation $\pi :D\to \tilde D$
is a closed map.
\end {proposition}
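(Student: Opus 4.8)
The plan is to follow the pattern of the proof of Proposition~\ref{closed map}, but with the ambient fibration $\pi : Z \to \tilde Z$ playing the role that the universal family of cycles played there. A preliminary remark to be used throughout: since the $\pi$-fibers over points of $\tilde D$ are precisely the cycle-separation classes, which are contained in $D$, we have $D = \pi^{-1}(\tilde D)$.

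So suppose $F\subset \mathrm{Aut}_{\mathcal O}(D)$ is closed, $h_n\in F$, and $g_n:=\pi_*(h_n)$ converges to $g\in \mathrm{Aut}_{\mathcal O}(\tilde D)$; the goal is to produce $h\in F$ with $\pi_*(h)=g$. First I would extract a subsequence of $\{h_n\}$ converging uniformly on compact subsets of $D$. This costs nothing: $D$ is an open orbit in the compact manifold $Z$, so $\overline D$ is compact and $\{h_n\}$, viewed as a family of holomorphic maps $D\to Z$, is normal. After passing to a subsequence we obtain a holomorphic limit $h:D\to \overline D$, and letting $n\to\infty$ in $\pi\circ h_n=g_n\circ\pi$ gives $\pi\circ h=g\circ\pi$. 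For $x\in D$ this forces $\pi(h(x))=g(\pi(x))\in\tilde D$, hence $h(x)\in\pi^{-1}(\tilde D)=D$; so $h$ is a holomorphic self-map of $D$ descending to $g$.

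The remaining, and only delicate, point will be to show that $h$ is a biholomorphism rather than a degenerate self-map. For this I would run the same normal-family argument on $\{h_n^{-1}\}$, using that $\pi_*(h_n^{-1})=g_n^{-1}$ converges to $g^{-1}\in\mathrm{Aut}_{\mathcal O}(\tilde D)$ since $\mathrm{Aut}_{\mathcal O}(\tilde D)$ is a topological group: a further subsequence yields a holomorphic map $k:D\to D$ with $\pi\circ k=g^{-1}\circ\pi$. Since the points $h_n^{-1}(x)$ eventually lie in a fixed compact neighbourhood of $k(x)$ on which $h_n\to h$ uniformly, passing to the limit in $h_n\circ h_n^{-1}=\mathrm{id}$ and $h_n^{-1}\circ h_n=\mathrm{id}$ gives $h\circ k=k\circ h=\mathrm{id}_D$. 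Hence $h\in\mathrm{Aut}_{\mathcal O}(D)$ with inverse $k$; as both $h_n\to h$ and $h_n^{-1}\to h^{-1}$ uniformly on compacta, $h_n\to h$ in $\mathrm{Aut}_{\mathcal O}(D)$, so $h\in F$ by closedness and $\pi_*(h)=g$. Therefore $g\in\pi_*(F)$, and $\pi_*(F)$ is closed.

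I expect the main obstacle to be exactly the passage from the uniform-on-compacts limit $h$ to an honest automorphism: such a limit of biholomorphisms of a noncompact manifold need not be biholomorphic, and it is precisely here that both the identification $D=\pi^{-1}(\tilde D)$ and the fact that the base limit $g$ already lies in $\mathrm{Aut}_{\mathcal O}(\tilde D)$ (so that $g^{-1}$ is available) are needed.
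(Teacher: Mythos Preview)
Your argument is correct, but it takes a genuinely different route from the paper. The paper's proof does not run a normal-family argument on $D$ at all; instead it recycles Proposition~\ref{closed map}. Namely, it uses the earlier lemma that $\pi$ induces a biholomorphism $\pi_0^*:\mathcal C_{\tilde q}(\tilde D)\to\mathcal C_q(D)$: convergence $g_n\to g$ in $\mathrm{Aut}_{\mathcal O}(\tilde D)$ yields convergence of the induced automorphisms of $\mathcal C_{\tilde q}(\tilde D)$, hence (via $\pi_0^*$) convergence of $\iota(h_n)$ in $\mathrm{Aut}_{\mathcal O}(\mathcal C_q(D))$, and then Proposition~\ref{closed map} finishes. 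So the paper leverages the cycle-space machinery already built, keeping the proof to a few lines, whereas your argument is more self-contained and elementary: it bypasses cycle spaces entirely and uses only the compactness of $Z$ together with the saturation $D=\pi^{-1}(\tilde D)$. Your approach has the advantage of making transparent exactly where the ``base limit $g$ is already invertible'' hypothesis enters (to get the inverse limit $k$ back into $D$), and would work in any setting where a flag-domain map is the saturated restriction of a fibration of compact ambient manifolds; the paper's approach has the advantage of exhibiting this proposition as a formal consequence of the biholomorphism $\mathcal C_{\tilde q}(\tilde D)\cong\mathcal C_q(D)$, which is the structural point the paper wants to emphasize.
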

\begin {proof}
Recall that $\pi $ induces a biholomorphic map
$\mathcal C_{\tilde q}(\tilde D)\to \mathcal C_q(D)$.  If
$F$ is a closed subset of $\mathrm {Aut}_{\mathcal O}(D)$
and $g_n=\pi_*(h_n)\in \pi_*(F)$ defines a sequence which converges
to $g\in \mathrm {Aut}_{\mathcal O}(\tilde D)$, then
the associated sequence of automorphisms of 
$\mathcal C_{\tilde q}(\tilde D)$ converges to the automorphism
of the cycle space associated to $g$.  Thus the sequence
$\{\iota (h_n)\}$ on the cycle space $\mathcal C_q(D)$ which 
is associated to $\{h_n\}$ converges and the result follows
from Proposition \ref{closed map}.
\end {proof}
As a result we know that the image of $\pi_*$ is
a closed subgroup of $\mathrm {Aut}_{\mathcal O}(\tilde D)$
and is therefore a Lie subgroup.  Recall that the 
$\mathrm {Ker}(\pi_*)=I$ is finite.  Thus
the map 
$
\pi_*:\mathrm {Aut}_{\mathcal O}(D)\to 
\mathrm {Aut}_{\mathcal O}(\tilde D)
$
is a topological covering map of a Lie group. As a result
$\mathrm {Aut}_{\mathcal O}(D)$ is homeomorphic to a Lie group
and again by the Theorem of Bochner and Montgomery it follows that
$\mathrm {Aut}_{\mathcal O}(D)$ is a Lie group acting smoothly
on $D$ as a group of holomorphic transformations.  This
completes the proof of Theorem \ref{lie structure}.  
\section {Detailed description  of 
$\mathbf{\mathrm {Aut}_{\mathcal O}(D)}$}
Recall that our initial setting is that of a simple
real form $G_0$ of a complex semisimple group $G$ acting
on a $G$-flag manifold $Z=G/Q$.  Except in the case where
$D$ holomorphically convex with nontrivial Remmert reduction,
where $\mathrm {Aut}_{\mathcal O}(D)$ is a certain
precisely described infinite-dimensional group, 
$\mathrm {Aut}_{\mathcal O}(D)$ is a Lie group acting smoothly
on $D$ by holomorphic transformations. In this section
we restrict to the latter case and give a more detailed description
of $\mathrm {Aut}_{\mathcal O}(D)$. Here is a first step in
that direction.
\begin {proposition}
If $Q$ is not maximal, i.e., if the Betti number $b_2(Z)$ is
at least two, then
$\mathrm {Aut}_{\mathcal O}(D)^\circ=G_0$.
\end {proposition}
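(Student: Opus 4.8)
The strategy is to play the two transitive complex group actions on $Z$ against each other, exactly as in the proof of Theorem \ref{discrete ineffectivity}, but now extracting a statement about the full group rather than just the ineffectivity. Suppose $\mathrm{Aut}_{\mathcal O}(D)^\circ$ properly contains $G_0$. Since we are in the case where $\mathrm{Aut}_{\mathcal O}(D)$ is a Lie group acting smoothly by holomorphic transformations on $D$, Proposition \ref{action extension} applies: the action of $\tilde G_0:=\mathrm{Aut}_{\mathcal O}(D)^\circ$ extends to a holomorphic action on all of $Z$. Passing to the smallest complex subgroup $\tilde G$ of $\mathrm{Aut}_{\mathcal O}(Z)$ containing $\tilde G_0$, one gets a connected complex group acting transitively on $Z$ (it contains $G$, which is already transitive), and the argument in Theorem \ref{discrete ineffectivity} shows $\tilde G$ properly contains $G$.

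The second step is to invoke Onishchik's classification (\cite{O1,O2}): if two distinct connected complex semisimple groups act transitively on the flag manifold $Z$, with one properly containing the other, then $Z=G/Q$ with $Q$ a \emph{maximal} parabolic, equivalently $b_2(Z)=1$. This directly contradicts the hypothesis $b_2(Z)\geq 2$. Hence no proper enlargement of $G_0$ inside $\mathrm{Aut}_{\mathcal O}(D)^\circ$ is possible, so $\mathrm{Aut}_{\mathcal O}(D)^\circ\subseteq G_0$; the reverse inclusion is immediate since $G_0$ acts transitively and hence by connected holomorphic transformations, giving equality.

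The one point that needs care — and which I would expect to be the main obstacle — is verifying that Proposition \ref{action extension} is genuinely available here, i.e., that $\mathrm{Aut}_{\mathcal O}(D)^\circ$ really is a connected Lie group acting effectively and smoothly by holomorphic transformations on $D$. This is precisely what Theorem \ref{lie structure} establishes in the cycle-connected and Hermitian-symmetric cases, and in the remaining (holomorphically convex, nontrivial Remmert reduction) case $b_2(Z)=1$ automatically, so that situation is excluded by hypothesis. Thus the hypothesis $b_2(Z)\geq 2$ both supplies the contradiction with Onishchik and rules out the infinite-dimensional exceptional case, and the proof goes through cleanly. A final remark worth including: one should note that the extension in Proposition \ref{action extension} is a holomorphic action of the complex group built from $\tilde G_0$, and that the semisimplicity of any group acting transitively on a projective rational homogeneous space is what forces $\mathfrak i\oplus\mathfrak g_0$ to be reductive and hence $\tilde{\mathfrak g}$ to genuinely exceed $\mathfrak g$ whenever the enlargement is nontrivial — this is the same dichotomy used earlier and requires no new input.
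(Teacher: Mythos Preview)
Your main argument is correct and matches the paper's proof essentially line for line: assume $\tilde G_0:=\mathrm{Aut}_{\mathcal O}(D)^\circ$ properly contains $G_0$, apply Proposition \ref{action extension} to extend the action to $Z$, pass to the smallest complex group $\tilde G\subset\mathrm{Aut}_{\mathcal O}(Z)$ containing $\tilde G_0$, invoke the argument from Theorem \ref{discrete ineffectivity} to see $\tilde G\supsetneq G$, and then quote Onishchik to force $Q$ maximal, contradicting $b_2(Z)\ge 2$.

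However, your side remark about the infinite-dimensional case is backwards. You claim that when $D$ is holomorphically convex with nontrivial Remmert reduction one has $b_2(Z)=1$ automatically, so the hypothesis excludes that case. In fact the opposite holds: the nontrivial Remmert reduction $D\to\widehat D$ is the restriction of a genuine $G$-equivariant fibration $Z=G/Q\to G/\widehat Q=\widehat Z$ with $Q\subsetneq\widehat Q\subsetneq G$, so $Q$ is \emph{not} maximal and $b_2(Z)\ge 2$. Thus the hypothesis $b_2(Z)\ge 2$ does not rule out the infinite-dimensional case; indeed the proposition as stated would be false there, since $\mathrm{Aut}_{\mathcal O}(D)$ is then infinite-dimensional. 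The paper avoids this by an explicit standing assumption at the start of the section (``In this section we restrict to the latter case''), not by any consequence of $b_2(Z)\ge 2$. Your proof is fine once you replace your incorrect justification with an appeal to that standing hypothesis.
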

\begin {proof}
If $\mathrm {Aut}_{\mathcal O}(D)^\circ =:\tilde G_0$, then by 
Proposition \ref{action extension} the action of $\tilde G_0$
extends to a smooth action $\tilde G_0\times Z\to Z$ of
holomorphic transformations on $Z$.  Since $\tilde G_0\supset G_0$,
the smallest complex Lie group $\tilde G$ in $\mathrm {Aut}_{\mathcal O}(Z)$
which contains $\tilde G_0$ also acts transitively on $Z$.
As we observed in the proof of Theorem \ref{discrete ineffectivity}, if
$\tilde G_0$ properly contains $G_0$, then $\tilde G$ properly contains
$G$.  Thus it follows from the work of Onishchik (\cite {O1,O2}) that
$Q$ is maximal.
\end {proof}
\begin {corollary}
If $G$ is defined to be $\mathrm{Aut}_{\mathcal O}(Z)^\circ$ and $D$
is not holomorphically convex with nontrivial Remmert reduction, then
$\mathrm{Aut}_{\mathcal O}(D)^\circ=G_0$.
\end {corollary}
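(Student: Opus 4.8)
The plan is to argue by contradiction, using the package of results already assembled for the preceding Proposition. Put $\tilde G_0 := \mathrm{Aut}_{\mathcal O}(D)^\circ$ and suppose that it properly contains $G_0$. Because $D$ is not holomorphically convex with nontrivial Remmert reduction, Theorem \ref{lie structure} puts us in its cases (2) or (3), so $\tilde G_0$ is a genuine (finite-dimensional) Lie group acting smoothly, and — by the very meaning of $\mathrm{Aut}_{\mathcal O}(D)$ — effectively on $D$ by holomorphic transformations. Hence Proposition \ref{action extension} applies and extends the action to a holomorphic action $\tilde G_0 \times Z \to Z$.

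Next I would pass to the smallest complex Lie subgroup $\tilde G$ of $\mathrm{Aut}_{\mathcal O}(Z)$ containing $\tilde G_0$; it is connected, so at once $\tilde G \subseteq \mathrm{Aut}_{\mathcal O}(Z)^\circ = G$. Now I invoke the implication established inside the proof of Theorem \ref{discrete ineffectivity} and re-used for the preceding Proposition: if $\tilde G_0$ properly contains $G_0$, then $\tilde G$ properly contains $G$. (The mechanism is that $\tilde G$ contains $G$, hence acts transitively on $Z$, hence is semisimple, and a real form of a complex simple Lie algebra is a maximal proper real subalgebra, so an enlargement of $G_0$ forces an enlargement of $G$ on the complex side.) Putting these together, $\tilde G$ would both be contained in $G$ and properly contain $G$ — impossible. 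Therefore $\tilde G_0 = G_0$, and since both groups are connected this is exactly $\mathrm{Aut}_{\mathcal O}(D)^\circ = G_0$.

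What makes this work, and the only place the hypothesis on $G$ is used, is the single inclusion $\tilde G \subseteq \mathrm{Aut}_{\mathcal O}(Z)^\circ = G$: it is precisely what replaces the Onishchik-maximality contradiction of the previous Proposition and disposes uniformly of the remaining cases (those with $b_2(Z)=1$). I do not expect a real obstacle here — the argument is a short deduction from facts already in hand — but the two points to be careful about are: (i) confirming that $\tilde G_0$ is indeed a finite-dimensional Lie group before quoting Proposition \ref{action extension}, which is guaranteed by the standing hypothesis via Theorem \ref{lie structure}; and (ii) keeping track of connectedness so that equality of Lie algebras upgrades to equality of groups. All of the genuine difficulty was already absorbed into Proposition \ref{action extension} and the proof of Theorem \ref{discrete ineffectivity}.
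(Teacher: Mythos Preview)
Your proof is correct and follows exactly the line the paper intends: the Corollary is stated without proof immediately after the Proposition, and the implied argument is precisely the one you give—rerun the Proposition's proof, but replace the Onishchik contradiction ($Q$ maximal) by the direct contradiction $\tilde G \subseteq \mathrm{Aut}_{\mathcal O}(Z)^\circ = G$ versus $\tilde G \supsetneq G$. Your care about invoking Theorem~\ref{lie structure} to ensure finite-dimensionality before applying Proposition~\ref{action extension}, and about connectedness, is appropriate and matches the paper's standing assumptions.
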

\subsection* {Exceptional cases}
If $G$ is not the connected component $\tilde G$ 
of the full automorphism group $\mathrm{Aut}_{\mathcal O}(Z)$ 
then, as we just noted, $Q$ is a maximal parabolic subgroup
of $G$ and we are in one of the situations classified by
Onishchik (see \cite {O1,O2}).  There are two series of flag
manifolds $Z$ where this is possible and one additional
isolated example:
\begin {itemize}
\item
Odd dimensional
projective spaces where $\tilde G=\mathrm {SL}_{2n}(\mathbb C)$
and $G=\mathrm {Sp}_n(\mathbb C)$.
\item
The space of isotropic
n-planes with respect to the standard complex bilinear form
on $\mathbb C^{2n}$ where $\tilde G=\mathrm {SO}_{2n}(\mathbb C)$
and $G=\mathrm {SO}_{2n-1}(\mathbb C)$.
\item
The 5-dimensional quadric where $\tilde G=\mathrm {SO}_7(\mathbb C)$
and $G=G_2$.
\end {itemize}

\bigskip\noindent
{\bf Example.}  Let $V$ be a complex vector space
and equip $W=V\oplus V^*$ with its standard symplectic
form which is defined by
$$
\omega (v+\varphi ,v'+\varphi ')=
\varphi '(v)-\varphi (v)\,.
$$
Let $\{e_1,\ldots ,e_n\}$ be a basis of $V$.  
Translating to the numerical space, for
$z,w\in \mathbb C^{2n}$ it follows that
$\omega (z,w)=z^tJw$ where
\begin {gather*}
J:=
\begin {pmatrix}
0 & -\mathrm {Id}\\
\mathrm {Id} & 0
\end {pmatrix}
.
\end {gather*}
Define the Hermitian form $h$ on $\mathbb C^{2n}$
by 
$$
h(z,w)=\frac{i}{2}\bar {z}^tJw\,.
$$
It is of signature $(n,n)$. For example a maximal
negative (resp. positive) space is spanned by vectors
of the form $(a,ia)$ (resp. $(a,-ia)$).

Let $\tilde G:=\mathrm {SL}_{\mathbb C}(W)$ and
$G:=\mathrm {Sp}_{\mathbb C}(W,\omega )$. Define
$\tilde G_0$ to be the subgroup of $h$-isometries 
in $\tilde G$ and $G_0=\tilde G_0\cap G$.  Observe that
$\tilde G_0\cong \mathrm{SU}(n,n)$ and 
$G_0\cong \mathrm {Sp}_{2n}(\mathbb R)$.

One directly checks that $\tilde G_0$ has exactly three
orbits in $Z=\mathbb P(W)$, namely the spaces $D^+$ and
$D^-$ of positive (resp. negative) (complex) lines
and the real hypersurface $\Sigma $ of isotropic lines.

To determine the orbit structure of the $G_0$-action
is convenient to use Matsuki duality.  For this
we choose the maximal compact subgroup $K_0$ of $G_0$
to be the copy of the unitary group $U_n$ which acts
diagonally on $W$, i.e., $k(v+\varphi)=k(v)+k(\varphi)$.
The complexification $K$ of $K_0$ in $G$ is just 
$\mathrm {GL}_n(\mathbb C)$. The duality theorem states that 
there is a natural bijective correspondence 
$\mu $ between the $K$-orbits
and the $G_0$-orbits in $Z$. This sends a 
$G_0$-orbit $\mathcal O$ to the $K$-orbit $\mu (\mathcal O)=K.p$ 
of any point $p\in \mathcal O$ with $K_0.p$ \emph{the} minimal 
$K_0$-orbit in $\mathcal O$.  Furthermore, 
$\mu (\mathcal O)\cap \mathcal O$ is just the minimal $K_0$-orbit
$K_0.p$.  

We mention this ``duality'',  because it is often easier to
understand the $K$-orbit structure than it is to understand
the $G_0$-orbit structure.  In this case one checks directly
that $K$ has four orbits in $Z$, namely the closed
orbits $\mathbb P(V)$ and $\mathbb P(V^*)$, a 1-codimesional
orbit with two ends whose closure $Y$ consists of the 
orbit together with the projective subspaces $\mathbb P(V)$
and $\mathbb P(V^*)$, and the complement $Z\setminus Y$ which is the
unique open $K$-orbit. Duality implies that the closed $K$-orbits
are the base cycles in the open $G_0$-orbits. Thus $G_0$ has
two open orbits which are of course contained in the two open
$\tilde G_0$-orbits $D^+$ and $D^-$. The smaller group $G_0$
stabilizes the closed $\tilde G_0$-orbit $\Sigma $. There
the real points $\Sigma _0$ are $G_0$-invariant.  Thus $G_0$
has at least two orbits in $\Sigma $. But altogther it has
only four orbits and therefore we have accounted for all of
them: $D^+$, $D^-$, $\Sigma _0$ and the complement 
$\Sigma \setminus \Sigma _0$.  In particular, $D^+$ and
$D^-$ are exceptional in the sense that the smaller
real form $G_0$ acts transitively. 
\qed

It would be of interest to have a complete list of 
all such exceptional cases, in particular to determine if
and when a given $G_0$-flag domain is \emph{properly} contained
in the corresponding $\tilde G_0$-flag domain. Arguing as above one 
could possibly compile such a list. 
This would, however, seem to require a somewhat 
involved case-by-case discussion 
which would not be appropriate for the present paper.

\subsection* {Acknowledgements}
At the initial stages of this project we profitted greatly 
from discussions with A. Isaev at the Austrailian National
University (ANU) in Canberra. We are thankful
for the funding from the Austrailian Research Council for making
this possible. We are also grateful to the Tata Institute
for Fundamental Research (TIFR) in Mumbai for providing us
with optimal conditions during the latter stages of the project.
\begin {thebibliography} {XXX}
\bibitem [B] {B}
Barlet, D: 
Espace analytique r\'eduit des cycles analytiques
complexes compacts d'un espace analytique complexe de
dimension finie, `` Fonctions de plusieurs variables
complexes'', II (S\'em. Fran\c cois Norguet, 1974--1975),
Springer Lecture Notes in Math. {\bf 482} (1975), 1--158.
\bibitem [BM] {BM}
Bochner, S. and Montgomery, S.:
Groups of differentiable and real or complex analytic transformations,
Annals of Mathematics, Vol. 46, no. 4, (1945) 685-694
\bibitem [FH] {FH}
Fels, G. and Huckleberry, A.:
Characterization of cycle domains via Kobayashi hyperbolicity.
In arXiv AG/0204341. Bull. Soc. Math. de France {\bf 133} (2005),
121--144.
\bibitem [FHW] {FHW}
Fels, G., Huckleberry, A. and Wolf, J.~A.: 
Cycles Spaces of Flag Domains: A Complex Geometric Viewpoint, 
Progress in Mathematics, Volume 245, Springer/Birkh\"auser Boston, 
2005
\bibitem [H] {H}
Hochschild, G.: 
The structure of Lie groups, Holden Day (1965)
\bibitem [H1] {H1}
Huckleberry, A.: 
On certain domains in cycle spaces of flag
manifolds, Math.~Ann. {\bf 323} (2002) 797-810 
\bibitem [H2] {H2}
Huckleberry, A.: Remarks on homogeneous manifolds satisfying
Levi-conditions, Volume of Bolletino 
dell'Unione Matematica Italiana dedicated to the memory of
Aldo Andreotti, (9) {\bf III} (2010) 1-23
\bibitem [HS] {HS}
Huckleberry, A. and Simon, A.: On cycle spaces of flag domains
of $Sl_n(\mathbb R)$ (Appendix by D. Barlet) , 
J. reine u. angew. Math. {\bf 541} (2001) 171-208
\bibitem [HO] {HO}
Huckleberry, A. and Oeljeklaus, E.: 
Classification Theorems for Almost Homogeneous Spaces, 
Publication de l'Institut Elie Cartan, Nancy, Janvier 1984
\bibitem [K] {K}
Kobayashi, S.: 
Hyperbolic Complex Spaces,
Grundlehren der mathematischen Wissenschaften,
{\bf 318}, Springer Verlag, 1998.
\bibitem [O1] {O1}
Onishchik, A. L.:
Inclusion relations among transitive compact transformation groups.
Trudy Moskov. Mat. Ob\v s\v c. {\bf 11} (1962), 199--142.
\bibitem [O2] {O2}
Onishchik, A. L.:
Lie groups that are transitive on compact manifolds, Mat. Sb. (N.S.)
{\bf 74} (1967) 49-98, English Trans.: AMS Translations (2) {\bf 73}
(1968) 59-72
\bibitem [W] {W}
Wolf, J. A.:
The action of a real semisimple Lie group on a complex
manifold, {\rm I}: Orbit structure and holomorphic arc components,
Bull. Amer. Math. Soc. {\bf 75} (1969), 1121--1237.
Wolf, J. A.: Standard Bull. AMS
\end {thebibliography}
\end {document}